\theoremstyle{plain}
\newtheorem{thm}{Theorem}[section]
\newtheorem{lem}[thm]{Lemma}
\newtheorem{prop}[thm]{Proposition}
\newtheorem{cor}[thm]{Corollary}
\theoremstyle{definition} 
\newtheorem{ass}{Assumption}
\theoremstyle{remark}
\newcommand{\new}{\newcommand}
\providecommand{\nor}[1]{\lVert{#1}\rVert}
\providecommand{\norop}[2]{\lVert{#1}\rVert}
\providecommand{\abs}[1]{\lvert{#1}\rvert}
\providecommand{\set}[1]{\{#1\}}
\providecommand{\scal}[2]{\langle{#1},{#2}\rangle}
\providecommand{\supp}[1]{\operatorname{supp}(#1)} 
\providecommand{\lspan}[1]{\operatorname{span}\{#1\}}
\providecommand{\tr}[1]{{\,^t\!#1}}
\new{\R}{\mathbb R}
\new{\C}{\mathbb C}
\new{\N}{\mathbb N}
\new{\Z}{\mathbb Z}
\new{\hh}{\mathcal H}
\new{\kk}{\mathcal K}
\new{\cT}{\mathcal T}
\new{\sS}{\mathcal S}
\new{\la}{\lambda}
\new{\eps}{\epsilon}
\DeclareMathOperator{\e}{e}
\DeclareMathOperator{\sinc}{sinc}
\new{\Lloc}{ L^1_\text{loc}(G)}
\new{\Lz}{ L^0(G)}
\new{\T}{\mathcal T} 
\new{\Ss}{\mathcal S} 
\new{\U}{\mathcal U}
\new{\Co}[1]{\operatorname{Co}(#1)}
\new{\I}{(1,+\infty)}
\new{\con}{\convolution}
\new{\vu}{K}
\new{\Luw}{L^1_w(G)}
\new{\Liw}{L^\infty_{w^{-1}}(G)}
\new{\ff}{\mathcal F} 
\new{\E}{E}
\new{\F}{F}
\begin{document}


\title{Coorbit spaces with  voice in a Fr\'echet space}

\author{S.~Dahlke}
\address{S.~Dahlke , FB12 Mathematik und Informatik, Philipps-Universit\"at
  Marburg, Hans-Meerwein Stra{\ss}e, Lahnberge, 35032 Marburg, Germany}
\email{dahlke@mathematik.uni-marburg.de}
\author{F.~De~Mari}
\address{F.~De~Mari, Dipartimento di Matematica, Universit\`a di Genova,  Via
  Dodecaneso 35, Genova,   Italy  }
\email{demari@dima.unige.it}
\author{E.~De Vito}
\address{E.~De Vito ,Dipartimento di Matematica, Universit\`a di Genova,  Via
  Dodecaneso 35, Genova,   Italy  }
\email{devito@dima.unige.it}
\author{D.~Labate}
\address{D.~Labate, Department of Mathematics, University of Houston,
651 PGH building, Houston, Texas U.S.A.}
\email{dlabate@math.uh.edu}
\author{G.~Steidl}
\address{G.~Steidl, Department of Mathematics, University of Kaiserslautern,
  Paul-Ehrlich-Str. 31, 67663 Kaiserslautern, Germany} 
\email{steidl@mathematik.uni-kl.de}
\author{G.~Teschke}
\address{G.~Teschke, Institute for Computational Mathematics in Science and
  Technology, Hochschule Neubrandenburg, University of Applied
  Sciences, Brodaer Str. 2, 17033 Neubrandenburg, Germany}
\email{teschke@hs-nb.de}
\author{S.~Vigogna}
\address{S.~Vigogna, Dipartimento di Matematica, Universit\`a di Genova,  Via
  Dodecaneso 35, Genova,   Italy  }
\email{vigogna@dima.unige.it}

\begin{abstract}
  We set up a new general coorbit space theory for reproducing
  representations of a locally compact second countable group $G$ that
  are not necessarily irreducible nor integrable. Our basic assumption
  is that the kernel associated with the voice transform belongs to a
  Fr\'echet space $\T$ of functions on $G$, which generalizes
  the classical choice $\T=L_w^1(G)$. Our basic example is $ \T
  =\bigcap_{p\in(1,+\infty)} L^p(G)$, or a weighted versions of it. By
  means of this choice it is possible to treat, for instance, Paley-Wiener
  spaces and coorbit spaces related to Shannon wavelets and {\it
    Schr\"odingerlets}. 
\end{abstract}

\keywords{ Coorbit spaces, Fr\'echet spaces, Representations of Locally,
Compact Groups, Reproducing Formulae}

\subjclass[2010]{43A15,42B35,22D10,46A04,46F05}

\maketitle

\section{Introduction}\label{intro}

One of the central problems in applied mathematics is the analysis of
signals. Usually signals are  modelled by functions in  suitable functions
spaces (e.g., $L^2$ or Sobolev spaces) and they might be given explicitly 
 or implicitly as the solution of an operator equation. In most applications,
the signal is transformed via a mapping into a suitable parameter space 
where  it is easier to extract the
information of interest. By discretization, one
 obtains suitable building blocks that give rise to a discrete
representation of the signal and can be used to decompose, compress and
process the signal. Over the years, many different transforms have been
derived in response  to particular  problems, including the wavelet and Gabor transforms.  
Representation theory, however,  gives a general approach to construct continuous transforms for $L^2$-functions,  and  coorbit space theory allows both to extend these transforms to more general function spaces and to provide discrete systems.  Indeed,  it was shown that 
virtually all  well-known transforms used in signal analysis can be
derived from this general setting. In this sense,
coorbit space theory serves  as a common thread in the jungle    of all possible
signal transformations. Nevertheless, as will be explained  below, the classical
coorbit space  setting relies on specific assumptions that might be hard to
verify in practice. The purpose of this paper is  to investigate how to weaken these basic assumptions 
 with the goal of extending the applicability of this framework to a much larger class of problems.

Coorbit space theory was originally introduced by H. Feichtinger and K. Gr\"ochenig in a series of papers in 1988-89 \cite{fegr88, fegr89a,fegr89b, gro91}. By means of this theory,  given a square integrable representation, it is  possible to construct in an efficient and systematic way  a full scale of smoothness spaces where the smoothness of  a function is measured by the decay 
of the so-called {\it voice transform}. For any unitary  representation $\pi$ of a locally compact topological group $G$ on a Hilbert space $\hh$ and a fixed $u\in\hh$, the  voice transform  $V$ is  the map assigning to $v\in\hh$ the corresponding transform $x\mapsto Vv(x)=\langle v,\pi(x)u\rangle$ as $x$ ranges in $G$.
Evidently, $Vv$ is a function on the group. Since $\hh$ is often a Hilbert space of functions, the voice transform connects two function spaces, that is, it maps signals to functions on the group. 

Coorbit space theory has been very successful in many ways and has given rise to a wealth of results, enabling to derive  a very large family of smoothness spaces as coorbit spaces, including both classical functions spaces and new ones. In particular, the classical Besov spaces are derived as coorbit spaces where smoothness is measured by the decay of the wavelet transform, i.e., the voice transform associated with the affine group. Similarly, the   well-established class of modulation spaces corresponds to the family of 
coorbit spaces where smoothness is measured by the decay of the Gabor transform, i.e., the voice transform associated  with the Weyl-Heisenberg group (cf.~\cite{fegr88, fegr89a,fegr89b,LSU12, UH11}). As another example, let us mentions the  $\alpha$--modulation spaces \cite{feigro85} which can be interpreted as coorbit spaces related to group representations modulo quotients \cite{DFH08}. 
Another advantage of coorbit space  theory is to provide  atomic decompositions and Banach frames for the coorbit spaces, through a procedure
which generates discrete function systems by discretization of the group representation.  This is important since it provides a way to understand the properties
of discrete signal representations through the group theoretic properties of their corresponding  continuous  voice transforms.

In recent years, a new generation of multiscale transforms has emerged
in applied harmonic analysis, such as the shearlet and curvelet
transforms, which were introduced to overcome the limitations of the
traditional multiscale framework in multi-dimensional setting with
high efficiency \cite{GL07,CD04}. Recent results have shown that the continuous shearlet transform, in particular, stems from a square  integrable group representation of  the so-called {\it full shearlet group} \cite{DKMSST08, DKS09,KL09}.   By applying the coorbit space theory to this setup, it is possible to define some useful anisotropic smoothness spaces 
via the decay properties of the shearlet transform and to relate these spaces to other well-known function spaces \cite{DST10,DST11, DST12, LMN13}. This has stimulated the investigation of a  larger class of group representations,  primarily those arising from the restriction of the metaplectic representation to a class of  triangular subgroups of the symplectic group \cite{ADDM13}. This class  includes many known cases of interest in signal analysis and gives rise to several new examples, such as the  Schr\"odingerlets that we discuss in this paper. Yet another potential extension of this framework is the general context of the
so-called mock metaplectic representations, introduced in \cite{dede13}.
However, the classical coorbit space theory appears to be too restrictive to deal with  this more general class of group representations
and the corresponding voice transforms and function spaces. 

Let us recall that the classical coorbit space theory {\it \`a la} Feichtinger-Gr\"ochenig
makes the following two assumptions:\\
(FG1) The {\it kernel} $K=Vu$, that is, the voice transform of the admissible vector itself,
is an absolutely integrable function on the group\footnote{For simplicity, in this introduction we use unweighted versions of $L^p(G)$.}.\\
(FG2) The representation is assumed to be irreducible.\\
A major part of this paper is concerned with replacing (FG1) by some weaker condition.
The fundamental concepts will be presented in Sections 2 and 3.
First of all, let us mention that  the problem of removing the integrability condition has already been addressed by
J. Christensen and G. \'Olafsson in \cite{C2012, chol11}. 
Classically, the reservoir of test functions is obtained by taking the functions whose voice transform is in $L^1(G)$,
hence it is a Banach space in a natural way.
In the papers \cite{C2012, chol11}, the reservoir is a fixed Fr\'echet space $\Ss$ densely embedded into $\hh$.
The basic example is the set of $C^\infty$ vectors for the representation.
The approach that we consider in this paper is indeed similar to \cite{chol11},
but features an important new datum that we call  the {\it target space} $\cT$.
This is a Fr\'echet space of functions on $G$ and plays the role of $L^1(G)$ in the classical setup.
In our theory, the reservoir $\Ss$ is the set of functions whose voice transform is in $\cT$.
In Section 4, we provide a new model for the target space, namely
\begin{equation}
\cT=\bigcap_{p>1}L^p(G).
\end{equation}
For this choice we are able to produce concrete cases arising from triangular subgroups of $Sp(2,\R)$,
notably the case of the so-called {\it Schr\"odingerlets}, discussed in Section 4.3.
As a toy example, in Section 4.2  we also consider the case of  the band-limited functions\footnote{Notice that the sinc function is in every $L^p$ with $p>1$ but not in $L^1$.}.

As for assumption (FG2),    most of the classical coorbit space theory can  be carried out also in the
reducible case.  In the irreducible case, it is possible to
show  that the construction of  the coorbit spaces is independent of the
choice of the admissible vector.  Exactly this property is lost in the
reducible case, as showed by an important example in \cite{F2013}. Neither in \cite{chol11} nor in our setting irreducibility is needed. In Section~\ref{AV} we give a reasonable description of admissible vectors leading to the same coorbit spaces. Finally, in Section~\ref{L1} we present a detailed account of the extent to which the classical $L^1$ theory can be developed without the assumption of irreducibility.

Let us briefly describe in some detail the main features of our approach.
 The starting ingredients  are  a unitary reproducing representation $\pi$ of a locally compact second countable group $G$ on a separable Hilbert space $\hh$ and an admissible vector $u\in\hh$.  Next,  the Fr\'echet spaces $\Ss$ and $\T$ come into play, and their roles  in the theory can be described by 
the following  very basic conceptual picture
 \[
 \begin{CD} 
                 \Ss@ >i>>\hh\\
@.@VV{V}V\\
\T@>j>>L^0(G)
\end{CD}
\]
where $L^0(G)$ denotes the space of measurable functions on $G$. Thus,
$\sS$ and $\cT$ are Fr\'echet spaces that embed into $\hh$ and
$L^0(G)$, respectively, and should therefore be thought of as signals
and functions on the group, respectively. The space $\T$ is a free
choice, as long as one can embed it continuously into $L^0(G)$ in such
a way that some basic properties are satisfied (Assumptions \ref{H1}
and \ref{H2}). The space $\sS$ will serve as the reservoir of {\it
  test functions} and it is defined as the subset of $\hh$ consisting
of those vectors whose voice transform belongs to $j(\cT)$. Test
functions are modeled in terms of their voice transforms and the
latter ones constitute the true degree of freedom in the construction,
the target space~$\cT$.

Once the basic structures are laid out, one then follows the lines of coorbit space theory and defines first the distributions $\sS'$ and then coorbits associated to Banach spaces of functions.

Technically speaking, our theory is determined by the  data set  $(G,\hh,\pi,u,\T,Y)$, where:
\begin{itemize}
\item $G$ is a locally compact second countable topological group;
\item  $\hh$ is a separable Hilbert space;
\item $\pi$ is a continuous unitary reproducing representation  of $G$ on $\hh$;
\item $u\in\hh$ is an admissible vector;
\item  $\T$ is a Fr\'echet space continuously embedded via $j$ into $L^0(G)$;
\item  $Y$ is a Banach space continuously embedded in $\Lz$ and left invariant.
\end{itemize}
The main ancillary objects attached to them are:
\begin{itemize} 
\item the voice transform $V_2:\hh\to L^2(G)$;
\item the reproducing kernel $K=V_2u$;
\item the reproducing kernel space  $\mathcal M=\set{f\in L^0(G)\mid f\con \vu=f}$;
\item the space of test functions $\Ss= \set{v\in \hh \mid V_2v\in j(\T)}$;
\item the space of distributions $\Ss'$;
\item the extended voice transform $V_e:\Ss'\to C(G)$;
\item the coorbit space $ \Co{Y} = \set{T\in \Ss' \mid V_e T\in Y}$.
\end{itemize}
These data are assumed to satisfy several assumptions that are made both for the target space
$\T$ and for the model space $Y$. 
Assumption~\ref{H1}  asks
that  the kernel  $K$ is in $\T$,  which substitutes the classical integrability
condition  $K\in L^1(G)$,  and that  $Kf\in L^1(G)$ whenever
$f\in\T$. This
second requirement has a twin version for $Y$, namely
Assumption~\ref{H5}, and it is trivially satisfied  in the case
$\T=L^1(G)$  because $K$ is bounded.  
Assumption~\ref{H2} and Assumption~\ref{H6} ask that the product of
any voice transform and any ``good'' function in $\T\cap\mathcal M$
(or in $Y\cap\mathcal M$) is in $L^1(G)$.

Assumption~\ref{H3} ensures that the extended voice transform is
injective. This is a necessary condition  to reconstruct 
a distribution  from  its voice transform. Finally,
Assumption~\ref{H4} requires that the reproducing formula extends to all 
distributions. In particular, we prove in Proposition~\ref{repr-formula} that
Assumption~\ref{H4} holds
true if   $\T$ is reflexive and $V_e\Ss'\subset\T'$. 

Our theory is succesful in the sense that:
 it provides a workable substitute for the classical integrability condition $K\in L^1(G)$;
 it contains the classical coorbit space theory even for non irreducible representations;
 it applies to several interesting examples;
 it is consistent with the recent theory developed in \cite{C2012, chol09}.


\section{Fr\'echet spaces of functions}\label{sec:target}
In this section, we recall some  properties of the Fr\'echet
spaces that are relevant to the main objects of our theory, namely the {\it
  target space} $\cT$ and the space $\Ss$ of {\it test signals}
that will be defined in the next section.   We introduce  abstract spaces $E$ and  $F$.
The space $E$ must be
interpreted as modeling a subspace of the Hilbert space $\hh$, hence
of signals, but possibly with a different topology. Its properties will be 
used primarily for the test space $\Ss$, but also for~$\hh$ itself.
Similarly, the space $F$ should be thought of as an abstract 
model of a Fr\'echet space of functions on the group. The results
proved for $F$ will be primarily applied to  the target space $\cT$,
which in many examples is a genuine Fr\'echet space but not a
Banach space, but will also be useful for $F=L^1(G)$, $F=L^2(G)$ 
and, most notably, for $F=Y$, the  space of functions used
to define  coorbit spaces. From this point of view, our theory
indicates that it  is possible to develop a useful coorbit space
theory assuming that  $Y$ is a Fr\'echet space rather than the more 
common choice of a Banach space. This further extension, however,
is beyond the scope of this article, and we content ourselves with
the classical case in which $Y$ is Banach space.


\subsection{Background}\label{background}
We now introduce the basic notation and  recall some elementary
properties. Further technical results are recalled in Section~\ref{Notation}. 

Throughout this paper, $G$,  denotes a fixed locally compact second countable
group with a left Haar measure $\beta$ and $\Delta$ is  its
modular function.  We write $\int_G f(x)dx$ instead of $\int_G f(x)\,d\beta(x)$ and 
denote the classical spaces of complex functions on  $G$ as follows:

\vspace{.5cm}

  \begin{tabular}{ll}
    $\Lz$ & $\beta$-measurable functions,\\
    $L^p(G)$ & $p$-integrable functions with respect to $\beta$,
    $p\in [1,+\infty)$,\\
    $L^\infty(G)$ & $\beta$-essentially bounded functions, \\
    $\Lloc$ & locally $\beta$-integrable functions,\\
    $C(G)$ &  continuous functions, \\
    $ C_0(G)$ & continuous functions going to zero at
    infinity, \\
    $C_c(G)$ & compactly supported continuous functions.
  \end{tabular}

\vspace{.3cm}

\noindent The space $\Lz$ is a metrizable complete topological vector space with
respect to the topology of convergence in measure
(see Section~\ref{Notation}). The norm of $f\in  L^p(G)$ and the scalar
product between $f,g\in L^2(G)$ are denoted by
$\nor{f}_p$ and $\scal{f}{g}_2$, respectively. The space $\Lloc$ is a Fr\'echet space 
with respect to the topology defined by the family of semi-norms
\[ f\mapsto \int_{\mathcal K} \abs{f(x)}dx,\]
where $\mathcal K$ runs over the compact subsets of $G$ (see
Section~\ref{Notation}).

We denote by $\la$ and $\rho$ the left and right regular
representations of $G$ on $L^0(G)$, namely  
\begin{align*}
  \la(x)f\,(y) & =f(x^{-1}y) \\
  \rho(x)f\,(y) & = f(yx) 
\end{align*}
for all $x\in G$,  all $f\in L^0(G)$ and  almost all $y\in G$. 
Both $\la$ and $\rho$ leave $\Lloc$ and each $L^p(G)$ invariant,
 and $\la$ is equicontinuous both  on $\Lloc$ and on each $L^p(G)$. In
 Section~\ref{Reps} we recall the main properties of the representations acting on
 Fr\'echet spaces. For general background on representations the
 reader is referred to \cite{fol95}. 

For all $f\in L^0(G)$ we denote by $\check{f}$ the element in $L^0(G)$ given by
\[ \check{f}(x)=f(x^{-1})\]
for almost all $ x \in G $ (see Section~\ref{Notation}).
Given two functions $f,g\in \Lz$, we say that the convolution $f\con g$
exists if for almost all $x\in G$ the function $f\la(x)\widecheck{g}$ is
in $L^1(G)$. We write
\[
f\con g(x)=\int_G f(y) \la(x)\widecheck{g}(y)\,dy= \int_G f(y)
g(y^{-1}x)\,dy\qquad\text{a.e. }x\in G
\]
and we have that $f\con g\in\Lz$
(see Section~\ref{convolution}).

\subsection{Voice transform }

In what follows, $\pi$  denotes a fixed strongly continuous unitary
representation of $G$ acting on the  separable Hilbert space $\hh$ and
$u$ a fixed vector in $\hh$. We stress that $\pi$ is not assumed to be
irreducible, nor, at this stage, reproducing. 
 As it is customary, the {\it voice transform} associated to the these data is the map
\[
V:\hh\to L^\infty(G)  \cap C(G), \qquad
Vv(x)=\scal{v}{\pi(x)u}_\hh .
\]
It  intertwines $\pi$ and $\lambda$, that is  
\begin{equation}
V\pi(x) =\la(x)V
\label{eq:4}
\end{equation}
for all $x\in G$. The corresponding  kernel is given by
\begin{equation}
K:G\to\C,\qquad K(x)=Vu(x)=\scal{u}{\pi(x)u}_\hh.
\label{eq:31}
\end{equation}
It enjoys the basic symmetry property
\begin{equation}
  \label{eq:32}
  \overline{K}=\widecheck{K}. 
\end{equation}
 For all $f\in L^1(G)$ the {\it Fourier transform} $\pi(f)$ is
the bounded operator on $\hh$ defined by
\[
\scal{\pi(f)w}{v}_\hh= \int_G  f(x) \scal{\pi(x)w}{v}_\hh\,dx
\]
for all $w,v\in\hh$. Note that, with the choice $w=u$, we get 
\begin{equation}
\scal{\pi(f)u}{v}_\hh =\int_G  f(x) \overline{Vv(x)}\,dx .\label{eq:42}
\end{equation}

\subsection{Functions on the group}\label{subsec:target}

In this section,  we consider a space $\F$ of functions on $G$ and 
we study the properties of the convolution operator 
$f\mapsto f\con K$ from $\F$ into $C(G)$. In particular,  we introduce  the
subspace $\mathcal M^\F$ of those
functions which are left fixed by the convolution operator.  
In the theory of  reproducing representations, $\F$ is the
Hilbert space $L^2(G)$; in the theory  developed by
H. Feichtinger and K. Gr\"ochenig  it is a weighted version of $L^1(G)$; in
our setting  it  is the target space~$\T$.

We assume that $\F$ is a  Fr\'echet space
with a  continuous embedding  $j:\F\to \Lz$. With slight abuse of
notation, given $f\in \F$, we denote by 
$f(\cdot)$ a $\beta$-measurable function  such that for almost every
$x\in G$, $ f(x)=j(f)(x)$. Further, we assume that there exist
\begin{enumerate}[i)]
\item a continuous involution $f\mapsto \overline{f}$ on
 $\F$ such that $j(\overline{f})=\overline{j(f)}$ (so that
 $\overline{f}(x)=\overline{f(x)}$);
\item 
a continuous representation $\ell$ of $G$  acting on $\F$ for which 
\[ j(\ell(x)f)=\la(x)j(f) \qquad f\in \F,\, x\in G,\]
so that $(\ell(x)f)(y)=f(x^{-1}y)$ and $\overline{\ell(x)f}=\ell(x)\overline{f}$.
\end{enumerate}
Standard examples of spaces satisfying the above assumptions 
are the $L^p$-spaces or their weighted  versions. Other important examples 
are the space of $C^\infty$ functions, if   $G$ is a Lie group,  or the space of 
rapidly decreasing functions, whenever this notion makes sense.

The space $j(\F)$ is a subspace of $\Lz$,  stable under
complex conjugation and   $\la$-invariant.  Clearly, we could identify $\F$ with 
$j(\F)$ avoiding the cumbersome map $j$. However, we want to stress that 
$\F$ has its own  topology, which  is not necessarily the topology of $j(\F)$, that is, the relative
topology as a topological subspace of  $\Lz$. In order to clarify the role of the two topologies, 
we shall not identify $\F$ with $j(\F)$.   

Since $j$ is continuous from $\F$ into
$\Lz$, for any  sequence
$(f_n)$ converging to  an element $f$ in $\F$, there exists a subsequence
$(f_{n_k})_k$  such that $(f_{n_k}(x))_k$ converges to $f(x)$ for
almost all $x\in G$ (see~\eqref{eq:25} for details). 

We denote by $\F'$ the topological dual of $\F$.
For each $T\in \F'$, the map $f\mapsto T(\overline{f})$ defines a
continuous anti-linear  function on $\F$, which we denote by
$\scal{T}{\cdot}_\F$. The map $T \mapsto \scal{T}{\cdot}_\F$ is a linear
isomorphism of $\F'$ onto the anti-dual $\F^\wedge$, the space of  anti-linear
continuous forms on $\F$. In what follows, we identify $\F^\wedge$ with $\F'$.
Observe that   the map~$(T,f)\mapsto \scal{T}{f}_\F$ is a
sequilinear form on $\F'\times \F$, linear in the first entry and
anti-linear in the second.

The K\"othe dual of $\F$ is defined by
\[ \F^\#=\set{g \in \Lz \mid g j(f)\in L^1(G), \text{ for all }f\in \F}.\]
It is closed under complex conjugation  and  its elements can be
regarded as  anti-linear forms on $\F$, as shown by the next lemma. Here
and below, we fix a  countable fundamental system
$\{q_i\}_i$ of saturated\footnote{A family is saturated if the
  maximum of any finite subset is in the family.} 
 semi-norms in $\F$. 
\begin{lem}\label{ltre}
 Given $g\in  \F^\#$, the map 
\begin{equation}
  \label{eq:9}
\F\ni f\mapsto  \scal{g}{f}_\F= \int_G g(x)\overline{f(x)}\,dx \in\C
\end{equation}
is a continuous anti-linear form, that is,  an element of $\F'$,
which we denote again by $g$.
The representation $\la$ leaves $\F^\#$  invariant 
and for all $x\in G$
\begin{equation}
  \label{eq:14}
\scal{\la(x)g}{f}_\F=\scal{g}{\ell(x^{-1})f}_\F.
\end{equation}
Finally, there exist a constant $C>0$ and a semi-norm $q_k$ in the
fundamental saturated family $\set{q_i}_i$ such that for all
$f\in \F$  
\begin{equation}
  \label{eq:50}
\int_G   \abs{f(x)}\abs{g(x)}\,dx \leq C q_k(f).
\end{equation}
\end{lem}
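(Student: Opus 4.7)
The three statements are tightly linked: (1) is the anti-linearity and continuity of $g$ as a form, (3) is the quantitative continuity estimate, and (2) is a change-of-variables identity. My strategy is to prove (3) first via the closed graph theorem (this yields (1) as a byproduct), and then derive (2) from the left-invariance of $\beta$ combined with the compatibility $j(\ell(x)f)=\la(x)j(f)$.

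\textbf{Step 1: closed graph argument for (1) and (3).} Consider the multiplication map
\[
\Phi_g:\F\longrightarrow L^1(G),\qquad \Phi_g(f)= g\,\overline{f}.
\]
The map is well defined since $\F$ is stable under complex conjugation (assumption (i)) and by the very definition of $\F^\#$. It is clearly linear. Because $\F$ and $L^1(G)$ are both Fr\'echet spaces, continuity will follow once I verify that $\Phi_g$ has a closed graph. Suppose $f_n\to f$ in $\F$ and $g\overline{f_n}\to h$ in $L^1(G)$. Since $j:\F\to L^0(G)$ is continuous, $f_n\to f$ in measure, so a subsequence $(f_{n_k})$ converges to $f$ pointwise a.e.; hence $g\overline{f_{n_k}}\to g\overline{f}$ pointwise a.e. On the other hand, $g\overline{f_n}\to h$ in $L^1$ forces a further subsequence converging to $h$ pointwise a.e. Thus $h=g\overline{f}$ a.e., and the graph is closed. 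By the closed graph theorem, $\Phi_g$ is continuous: there exist $C>0$ and a semi-norm $q_k$ in the fixed saturated fundamental system with
\[
\int_G \abs{g(x)}\abs{f(x)}\,dx = \nor{\Phi_g(f)}_1\le C q_k(f),
\]
which is exactly \eqref{eq:50}. Composing $\Phi_g$ with the continuous linear functional $L^1(G)\ni h\mapsto \int_G h(x)\,dx$ shows that $f\mapsto \scal{g}{f}_\F$ is continuous; it is anti-linear by inspection, proving (1).

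\textbf{Step 2: invariance and intertwining identity for (2).} Fix $g\in \F^\#$, $x\in G$ and $f\in \F$. Using the left-invariance of $\beta$ and the change of variable $z=x^{-1}y$, together with $(\ell(x^{-1})f)(z)=f(xz)$ (which comes from assumption (ii)), we get
\[
\int_G \abs{(\la(x)g)(y)\,f(y)}\,dy=\int_G \abs{g(x^{-1}y)\,f(y)}\,dy=\int_G \abs{g(z)\,(\ell(x^{-1})f)(z)}\,dz,
\]
which is finite because $g\in \F^\#$ and $\ell(x^{-1})f\in \F$. Hence $\la(x)g\in \F^\#$. The same change of variable applied without the absolute values yields
\[
\scal{\la(x)g}{f}_\F=\int_G g(x^{-1}y)\overline{f(y)}\,dy=\int_G g(z)\overline{(\ell(x^{-1})f)(z)}\,dz=\scal{g}{\ell(x^{-1})f}_\F,
\]
which is \eqref{eq:14}.

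\textbf{Main obstacle.} The only non-routine point is the continuity statement in (1)/(3): $\F$ carries its own Fr\'echet topology, in general strictly finer than the subspace topology inherited from $L^0(G)$, so continuity of the form $f\mapsto \int g\overline{f}$ does not follow automatically from its a.e.\ definition. The closed graph theorem together with the passage to a.e.-convergent subsequences (granted by the continuity of $j:\F\to L^0(G)$, as recalled in the paragraph preceding the lemma) is precisely what bridges this gap; everything else is a straightforward unwinding of the definitions.
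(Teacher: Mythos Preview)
Your approach is essentially the same as the paper's: both use the closed graph theorem applied to the multiplication-by-$g$ map into $L^1(G)$, with the graph closed via the continuous embedding $j:\F\to L^0(G)$ and passage to a.e.-convergent subsequences, and both derive \eqref{eq:14} by the same left-invariance change of variables. One small slip: your map $\Phi_g(f)=g\overline{f}$ is \emph{anti}-linear, not linear as you claim; the paper avoids this by working with $\mathcal{L}f=g\,j(f)$ (linear) and composing with the continuous involution afterwards, and you could fix it the same way (or note that the closed graph theorem applies to anti-linear maps via pre-composition with the continuous involution $f\mapsto\overline{f}$).
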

\begin{proof}
By definition, for each $f\in \F$ the function $g\,j(f)$ is in
$L^1(G)$. We claim that the linear map 
\[{\mathcal L}: \F\to L^1(G), \qquad
{\mathcal L}f=g\,j(f) \]
is continuous. Since both $\F$ and $L^1(G)$ are separable metrizable
vector spaces, by the closed graph theorem (Corollary 5 of Chapter I.3.3 of \cite{BTVS})
it is enough to show that   the graph of ${\mathcal L}$  is  sequentially
closed in $\F\times L^1(G)$. Take a sequence $(f_n)_n$ in $\F$ converging
to $f$ in $\F$ and such that $({\mathcal L} f_n)_n$ converges to $\varphi$ in
$L^1(G)$. Since both
$\F$ and $L^1(G)$ are continuously embedded in $\Lz$, possibly
passing to a subsequence,  we can assume that both $(f_n(x))_n$ and
$({\mathcal L} f_n(x))_n$  converge  to $f(x)$ and $\varphi(x)$, respectively, for almost every $x$. 
 Hence for almost all $x\in G$
\[{\mathcal L}f\,(x)=g(x)f(x)=\lim_{n\to+\infty} g(x)f_n(x)=\lim_{n\to+\infty}{\mathcal L}f_n(x)=\varphi(x),\]
that is, ${\mathcal L} f=\varphi$ in $L^1(G)$. Hence ${\mathcal L}$ is continuous, as well as
the anti-linear form 
\[f\mapsto \int_G  {\mathcal L} \overline{f}(x)dx=
\int_G g(x)\overline{f(x)}\,dx=\scal{g}{f}_\F.\] 
We now prove that $\la$ leaves $\F^\#$  invariant. Indeed, given $x\in
G$ and $f\in \F$
\[ \int_G \abs{\la(x)g\,(y) f(y)}dy= \int_G \abs{g(x^{-1}y)
f(y)}dy= \int_G \abs{g(y)
f(xy)}dy =\int_G \abs{g(y)
\ell(x^{-1})f\,(y)}dy <+\infty,\]
where the last integral is finite since  $\ell(x^{-1})f\in \F$. Hence 
$\la(x)g\in \F^\#$. The same string of equalities gives~\eqref{eq:14}.
Finally, the last formula follows directly from the continuity of ${\mathcal L}$.
\end{proof}
In general, $\F^\#$ is a proper subset of $\F'$ as the following example clarifies.
Take $\F=L^p(G)$ with $p\in [1,+\infty]$. 
Then $ L^p(G)^\# = L^{p'}(G) $, where $p'$ is the dual exponent of $p$,
so that $ L^p(G)^\# = L^p(G)' $ for all $ p < +\infty $,
but of course $ L^{\infty}(G)^\# = L^1(G) \subsetneq L^\infty(G)' $.

The next proposition shows that, if the kernel $K$ belongs to
  the K\"othe dual of $\F$, then for all $f\in\F$ the convolution
  $j(f)\con K$ exists. Furthermore, we introduce the subspace $\mathcal
M^\F\subset \F$ whose elements  are those reproduced by convolution with
$K$ on the  right.  In the following statement $C(G)$ is endowed
with the topology of the compact\footnote{Short for: uniform
  convergence on compact sets.}  convergence.

\begin{prop}\label{uno}
Assume that ${K}\in \F^\#$. Then: 
\begin{enumerate}[a)]
\item for all $f\in \F$, $j(f)\con K$ exists everywhere, it is
  a continuous function and, for
  all $x\in G$,
  \begin{equation}
    \label{eq:15}
   j(f)\con K(x)=\scal{\la(x) \check{K}}{\overline{f}}_\F
   = \scal{ \check{K}}{\overline{\ell(x^{-1}) f}}_\F;
  \end{equation}
\item the map $f\mapsto j(f)\con  K$
 is continuous from $\F$ to $C(G)$;
\item the set
  \begin{equation}
    \label{eq:11}
    \mathcal M^\F=\set{f\in \F\mid j(f)\con K= j(f)}
\end{equation}
is an $\ell$-invariant closed subspace of $\F$, and therefore it is a Fr\'echet space respect to the relative topology.
\end{enumerate}
\end{prop}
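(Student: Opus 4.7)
My plan is to treat the three assertions in turn, reducing everything to the representation of the convolution as a pairing and then invoking the tools from Lemma~\ref{ltre}.

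For part (a), I would first rewrite the integrand pointwise. Since $\overline{K}=\check K$ by \eqref{eq:32}, and since $F^\#$ is stable under conjugation and under $\la$ (Lemma~\ref{ltre}), for every $x\in G$ the function $\la(x)\check K$ lies in $F^\#$. A direct change of variable gives $K(y^{-1}x)=\check K(x^{-1}y)=\la(x)\check K(y)$, so the convolution integral
\[
\int_G f(y)\,K(y^{-1}x)\,dy=\int_G \la(x)\check K(y)\,\overline{\overline{f(y)}}\,dy
\]
converges absolutely thanks to $\la(x)\check K\in F^\#$, and equals $\scal{\la(x)\check K}{\overline{f}}_F$. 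This already proves that $j(f)\con K$ is defined everywhere. The second equality in \eqref{eq:15} then follows from \eqref{eq:14} together with the compatibility $\overline{\ell(x^{-1})f}=\ell(x^{-1})\overline{f}$. Continuity of $x\mapsto j(f)\con K(x)$ follows because the orbit map $x\mapsto \ell(x^{-1})f$ is continuous $G\to F$ (continuity of the representation $\ell$), conjugation is continuous on $F$, and $\check K$ acts as a continuous anti-linear form by Lemma~\ref{ltre}.

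For part (b), I would fix a compact set $\mathcal K\subset G$ and estimate uniformly for $x\in\mathcal K$ using \eqref{eq:50} applied to $\check K\in F^\#$: there exist $C>0$ and a seminorm $q_k$ such that
\[
\abs{j(f)\con K(x)}=\abs{\scal{\check K}{\overline{\ell(x^{-1})f}}_F}\le C\,q_k\bigl(\ell(x^{-1})f\bigr).
\]
By the local equicontinuity of the continuous representation $\ell$ on the Fr\'echet space $F$ (recalled in Section~\ref{Reps}), there exist a further seminorm $q_{k'}$ and a constant $C'>0$ with $\sup_{x\in\mathcal K}q_k(\ell(x^{-1})f)\le C' q_{k'}(f)$, yielding the required continuity $F\to C(G)$.

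For part (c), the $\ell$-invariance is the easy half: convolution on the right commutes with left translation (by a left-Haar change of variable), so $j(\ell(x)f)\con K=\la(x)j(f)\con K=\la(x)j(f)=j(\ell(x)f)$. The closedness is where some care is needed. If $f_n\in\mathcal M^F$ converges to $f$ in $F$, then $j(f_n)\to j(f)$ in $\Lz$ by continuity of $j$, so a subsequence $j(f_{n_k})$ converges to $j(f)$ almost everywhere; on the other hand, part (b) gives $j(f_{n_k})\con K\to j(f)\con K$ uniformly on compacta, hence pointwise. Since $j(f_{n_k})=j(f_{n_k})\con K$ a.e.\ for every $k$, passing to the limit yields $j(f)=j(f)\con K$ a.e., i.e.\ $f\in\mathcal M^F$. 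Being a closed subspace of the Fr\'echet space $F$, $\mathcal M^F$ is itself Fr\'echet.

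The only slightly delicate step is the closedness argument: one must combine two different modes of convergence (in $\Lz$ versus uniform on compacta) and match them through a subsequence extracted from the $\Lz$-convergence. Everything else is a routine unwinding of definitions once Lemma~\ref{ltre} and the continuity of $\ell$ are in hand.
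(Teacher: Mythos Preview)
Your proof is correct and follows essentially the same route as the paper: expressing the convolution as the pairing $\scal{\la(x)\check K}{\overline f}_\F$ via Lemma~\ref{ltre}, deducing continuity from the continuity of $x\mapsto\ell(x^{-1})f$, bounding uniformly on compacta through local equicontinuity of $\ell$, and proving closedness of $\mathcal M^\F$ by extracting an a.e.-convergent subsequence and matching it against the locally uniform convergence of the convolutions. The only cosmetic difference is that the paper cites \eqref{eq:27} for the $\ell$-invariance while you redo the change of variable directly; both are fine.
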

\begin{proof} Notice that, in view of \eqref{eq:32}, $ K \in \F^\# $ if and only if $ \check{K} \in \F^\# $.
Since $\la$ leaves $\F^\#$ invariant (Lemma \ref{ltre}), for all $x\in G$ we have $\la(x)\check{K} \in \F^\#$ and, for all $f\in \F$,
\[ \scal{ \la(x) \check{K}}{\overline{f}}_\F =\int_G f(y)\la(x)
\check{K}\,(y)\,dy= \int_G f(y)
K(y^{-1}x)\,dy =j(f)\con K\,(x).\]
Hence $j(f)\con K(x)$ exists and the first equality of~\eqref{eq:15} holds
true.  The change of variables $y\mapsto xy$ proves the second equality of~\eqref{eq:15}.
Since the involution  and $x\mapsto\ell(x^{-1})f$ are continuous,~\eqref{eq:15} implies that
$j(f)\con K$ is a continuous function.

To prove b), fix a compact subset ${\mathcal K}\subset G$. By \eqref{eq:15}, since 
$\check{K}\in\F^\#\subset\F'$, there exist two semi-norms $q_j,q_k$  
in the  fundamental saturated system $\set{q_i}_i$ and  constants $C$ and $C'$ such that
\[
  \sup_{x\in {\mathcal K}} \abs{j(f)\con K\,(x)} 
   \leq C \sup_{x\in {\mathcal K}^{-1}}
  q_j(\ell(x)f) 
 \leq CÕ q_k(f),
\]
where the last inequality follows from the fact that $\ell({\mathcal K}^{-1})$ is equicontinuous
since ${\mathcal K}^{-1}$ is compact (see Section~\ref{Reps}).

As for~ c), since $\F$ is a metrizable vector space, it is sufficient to prove that
$\mathcal M^\F$ is sequentially closed.  Take  a sequence $(f_n)_n$ in
$\mathcal M^\F$ converging to $f\in \F$. Possibly passing to a 
subsequence, we can assume that there exists a negligible set $N$ such
that  for all $x\notin N$ $(f_n(x))_n$ converges to $f(x)$.
Furthermore, possibly changing $N$, we can also assume that, for all
$n\in\N$ and $x\notin N$,  $j(f_n)\con K\,(x)=f_n(x)$.
Hence, given $x\notin N$,  by  b)  we have
\[j(f) \con K\,(x)= \lim_n j(f_n)\con K\,(x)=\lim_n f_n(x)=f(x).\]
Hence $j(f)\con K=j(f)$ in $\Lz$, that is $f\in\mathcal M^\F$.
Finally, given $x\in G$ and $f\in\mathcal M^\F$, by~\eqref{eq:27} in the appendix
\[ j(\ell(x)f)=\la(x) j(f)=\la(x)(j(f)\con K)= \la(x)j(f)\con K= j(\ell(x)f)\con K ,\]
that is $\ell(x)f\in\mathcal M^\F$.
\end{proof}
In what follows, for each $f\in \mathcal M^\F$, we choose the
continuous everywhere defined function $j(f)\con K$ as
representative of $f$, so that for all $x\in G$
\begin{equation}
  \label{eq:16}
   j(f)\con K\,(x)=f(x).
\end{equation}

\subsection{Extension of the voice transform and Fourier transform}\label{sec:ext}

We are interested in extending the voice transform $V$ from $\hh$  to some bigger space,
namely the dual of a Fr\'echet space  $\E$ which is continuously embedded into
  $\hh$,  in such a way that the duality relation~\eqref{eq:42} still
  holds true.  In the classical coorbit space theory and in our setting, $\E$
is the space of test functions $\Ss$, which in  \cite{chol11} is
  the basic object on which the theory is developed.

We fix a Fr\'echet space $\E$ together with a continuous representation
$\tau$ of $G$ acting on $\E$ and  a continuous embedding $i:\E\to
\hh$ intertwining $\tau$ and $\pi$. As above, 
we identify the dual $\E'$ and the anti-dual $\E^\wedge$.
We are interested in 
the transpose $\tr{i}:\hh\to \E'$, defined as usual by
\[ \scal{\tr{i}(w)}{v}_\E= \scal{w}{i(v)}_\hh.\]
We assume that  $u\in i(\E)$ and, with slight
  abuse of notation, we regard $u$ as an element both in $\E$ and in $\hh$.
 Hence, we define the {\it extended voice transform} by
\begin{equation}\label{EVT}
V_e:\E'\to C(G),\qquad V_eT=\scal{T}{\tau(\cdot)u}_\E,
\end{equation}
which intertwines the contragredient representation $\tr{\tau}$ with
the left regular representation $\la$.

Hereafter we establish some useful properties of the abstract space $\E$, that will
 be applied to the space $\Ss$ of test functions in Section~\ref{testanddist}.

A basic requirement on $V_e$ is that it must be
injective.  In the next lemma some standard equivalent conditions are established.
\begin{lem}\label{b} The following facts are equivalent:
  \begin{enumerate}[i)]
  \item\label{b1} the map $V_e$ is injective;
  \item\label{b2} the set $\tau(G)u$ is total in $\E$;
  \item\label{b3} the set $\tau(G)u$ is total in\footnote{Evidently, $\E_{weak}$ is just $\E$ endowed with the weak topology.} $\E_{weak}$.
  \end{enumerate}  
\end{lem}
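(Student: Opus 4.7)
The plan is to prove the chain $\text{i)} \Leftrightarrow \text{ii)} \Leftrightarrow \text{iii)}$ via the Hahn--Banach theorem applied to the Fr\'echet (hence locally convex Hausdorff) space $\E$. All three statements will be reformulated as conditions on the closed linear span $M$ of $\tau(G)u$ in $\E$.

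For $\text{i)} \Leftrightarrow \text{ii)}$, I would begin by unwinding the definition of $V_e$: for $T\in\E'$, $V_eT=0$ as an element of $C(G)$ means $\scal{T}{\tau(x)u}_\E=0$ for every $x\in G$. Since $T$ is anti-linear and continuous, this vanishing on the orbit $\tau(G)u$ extends first to vanishing on the linear span of $\tau(G)u$ (by anti-linearity) and then to its closure $M$ in $\E$ (by continuity of $T$). Hence $V_e$ is injective iff the only $T\in\E'$ annihilating $M$ is $T=0$. By the standard Hahn--Banach corollary that separates a closed proper subspace of a locally convex Hausdorff space from any point outside it, this latter condition is equivalent to $M=\E$, i.e.\ to ii).

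For $\text{ii)} \Leftrightarrow \text{iii)}$, I would invoke Mazur's theorem (a direct consequence of Hahn--Banach separation in locally convex spaces): the closure of a convex subset of $\E$ in the original topology coincides with its closure in the weak topology. The linear span of $\tau(G)u$ is convex, so its closure in $\E$ and in $\E_{weak}$ agree, yielding the equivalence.

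I do not expect any genuine obstacle: the whole argument is a textbook application of Hahn--Banach and Mazur, the only ingredients being that $\E$ is locally convex Hausdorff (guaranteed by its being Fr\'echet) and that $V_e$ is expressed through the sesquilinear duality $\scal{\cdot}{\cdot}_\E$ already identified earlier in the section. The only mild bookkeeping point to keep in mind is that $\scal{T}{\cdot}_\E$ is anti-linear in the second argument, which affects the form of the spanning argument but not its substance.
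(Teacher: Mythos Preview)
Your proposal is correct and follows essentially the same route as the paper's proof: the paper also derives ii)$\Leftrightarrow$iii) from the fact that a convex set has the same closure in the original and weak topologies, and obtains i)$\Leftrightarrow$ii) via the Hahn--Banach separation corollary applied to the closed linear span of $\tau(G)u$. The only cosmetic difference is the order of presentation (the paper treats ii)$\Leftrightarrow$iii) first).
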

\begin{proof}
Define $D$ as the closure in $\E$ of the linear span of $\tau(G)u$ .
Since the linear span of $\tau(G)u$ is convex, $D$ is convex (Proposition~14 Chapter~II.2.6 of \cite{BTVS}).
Then $D$ is a closed convex subset, and so it is also weakly closed (Proposition~1 Chapter~IV.1.1 of \cite{BTVS}).
Therefore $D$ is also the closure in $\E_{weak}$ of the linear span of $\tau(G)u$.
 This proves the equivalence between \ref{b2}) and \ref{b3}).

Next, assume \ref{b2}), that is $ D = \E $.
If $ T \in \E' $ is such that $ V_eT = 0 $, then $ \scal{T}{v}_\E = 0 $ for all $ v \in D = \E $, hence $ T = 0 $.
Therefore \ref{b2}) implies \ref{b1}).
Finally, suppose $ D \subsetneq \E $.
Then, as a consequence of the Hahn--Banach theorem (\cite{hor66}, Proposition~2, p.180),
there exist $ T \in \E' $ and $ v_0 \in E \setminus D $ such that
$ \scal{T}{v}_\E = 0 $ for all $ v \in D $ and $ \scal{T}{v_0}_\E \neq 0 $,
so that $ V_eT = 0 $ but $ T \neq 0 $.
Therefore \ref{b1}) implies \ref{b2}).
\end{proof}

By definition, $u$ is a cyclic vector for the representation $\tau$ if condition \ref{b2}) holds true.
Notice that the cyclicity for $\pi$ does not imply the cyclicity for $\tau$,
since the topology of $\E$ is finer than the topology of $\hh$.

The topological dual of $E$ comes now into play and will be denoted by $\E'$.
When topological properties are involved, $\E'$ is understood to have the topology of the convergence on the bounded subsets of $E$.
We will write $\E'_s$ to stress when $\E'$ is rather thought with the topology of the simple convergence (compare Section~\ref{scalar_integration}).

The following proposition shows that, for any function $f\in L^0(G)$ satisfying a suitable integrability condition,
it is possible to define an element in $\E'$ which plays the role of the Fourier transform of $f$ at $u$.
It is useful to compare our  assumption~\eqref{eq:17} with condition~(R3) in \cite{chol11}.

\begin{prop}\label{due}  Take $f\in L^0(G)$ and assume that
  \begin{equation}
    \label{eq:17}
   f\,  Vi(v) \in L^1(G)\qquad \text{for all }v\in \E.
  \end{equation}
Then there exists a unique $\pi(f)u\in \E'$ such that, for all $v\in E$,
 \begin{equation}
   \label{eq:12}
   \scal{\pi(f)u}{v}_\E= \int_G f(x) \scal{\pi(x)u}{i(v)}_\hh\,dx = \int_G f(x) \overline{Vi(v)(x)}\,dx.
 \end{equation}
For any such $f$, we have
\begin{equation}
  \label{eq:40}
  V_e\pi(f)u= f\con \vu.
\end{equation}
Finally, assume that $\vu\con \vu$ exists, is equal to $\vu$ and that
$f\con (\abs{\vu}\con\abs{\vu})$ exists. Then 
\begin{equation}
  \label{d}
  V_e\pi(f)u \con K = V_e\pi(f)u.
\end{equation}
\end{prop}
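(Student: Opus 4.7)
The plan is to establish the three assertions in order, each building on the previous one.

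For the existence of $\pi(f)u \in \E'$, the integral in \eqref{eq:12} is well defined pointwise thanks to \eqref{eq:17}, and it is clearly antilinear in $v$. The task is therefore to show continuity, and the natural tool is the closed graph theorem, exactly as in Lemma~\ref{ltre}. I would introduce the linear map $\mathcal{L}:\E\to L^1(G)$ given by $\mathcal{L}v = f\cdot \overline{Vi(v)}$, which is well defined by \eqref{eq:17}. Both $\E$ and $L^1(G)$ are separable Fréchet spaces, so it suffices to check sequential closure of the graph. If $v_n\to v$ in $\E$, continuity of $i$ gives $i(v_n)\to i(v)$ in $\hh$, and since $Vi(w)(x)=\scal{i(w)}{\pi(x)u}_\hh$ depends continuously on $w\in\hh$ for each $x$, we get pointwise convergence $\mathcal{L}v_n(x)\to \mathcal{L}v(x)$ for all $x\in G$. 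If simultaneously $\mathcal{L}v_n\to\varphi$ in $L^1(G)$, a subsequence converges to $\varphi$ a.e., forcing $\varphi = \mathcal{L}v$. Hence $\mathcal{L}$ is continuous, and so is $v\mapsto \int_G \mathcal{L}\overline{v}(x)\,dx$, which is precisely the antilinear form defining $\pi(f)u$. Uniqueness is automatic from the identification of $\E'$ with $\E^\wedge$.

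For \eqref{eq:40}, the computation is direct. By the intertwining property \eqref{eq:4} of $V$, or equivalently by $\tau$-intertwining of $i$, we have $Vi(\tau(x)u)=V\pi(x)u=\la(x)K$, so $Vi(\tau(x)u)(y)=K(x^{-1}y)$. Substituting into \eqref{eq:12} with $v=\tau(x)u$ yields
\[
V_e\pi(f)u\,(x) = \scal{\pi(f)u}{\tau(x)u}_\E = \int_G f(y)\,\overline{K(x^{-1}y)}\,dy.
\]
Invoking the symmetry $\overline{K}=\widecheck{K}$ from \eqref{eq:32}, we get $\overline{K(x^{-1}y)} = K(y^{-1}x)$, and the last integral is precisely $f\con K\,(x)$. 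Note that $f\con K$ exists as a function in $L^0(G)$ because, for each $x$, the integrand coincides pointwise with $f(y)\,\overline{Vi(\tau(x)u)(y)}$, which is integrable by \eqref{eq:17}.

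For \eqref{d}, the strategy is to apply Fubini to the iterated convolution $(f\con K)\con K$ and to use the hypothesis $K\con K = K$. The assumption that $f\con(|K|\con|K|)$ exists provides, for almost every $x\in G$, the absolute integrability
\[
\int_G\!\int_G |f(y)|\,|K(y^{-1}z)|\,|K(z^{-1}x)|\,dy\,dz < +\infty,
\]
which legitimates exchanging the order of integration:
\[
(f\con K)\con K\,(x) = \int_G f(y)\Big(\int_G K(y^{-1}z)K(z^{-1}x)\,dz\Big)dy = \int_G f(y)\,K\con K\,(y^{-1}x)\,dy = f\con K\,(x).
\]
Combining with \eqref{eq:40} gives $V_e\pi(f)u\con K = f\con K\con K = f\con K = V_e\pi(f)u$, which is \eqref{d}.

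The main obstacle is really only the first step: verifying the continuity of the antilinear form defining $\pi(f)u$ under the minimal hypothesis \eqref{eq:17}. Once this is secured, parts \eqref{eq:40} and \eqref{d} reduce to bookkeeping with the definitions and a Fubini application backed exactly by the hypothesis on $f\con(|K|\con|K|)$.
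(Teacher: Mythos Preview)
Your proof is correct, but the first step follows a different route from the paper. The paper views the map $x\mapsto f(x)\,\tr{i}(\pi(x)u)$ as a scalarly integrable function $G\to\E'_s$ and invokes the Gelfand--Dunford theorem (Theorem~\ref{Gelfand-Dunford}), using that the Fr\'echet space $\E$ has the (GDF) property; the scalar integral then lands in $\E'$ automatically. You instead run a closed graph argument on the map $\E\to L^1(G)$, $v\mapsto f\cdot\overline{Vi(v)}$, directly mirroring the proof of Lemma~\ref{ltre}. Both work; your approach is more elementary and self-contained, while the paper's makes explicit the weak-integral interpretation of $\pi(f)u$, which is the viewpoint used systematically later (e.g.\ in Proposition~\ref{repr-formula} and Lemma~\ref{rsei}).

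A few cosmetic points to clean up. Your map $\mathcal{L}$ as written is antilinear, not linear; either replace it by $v\mapsto \overline{f}\cdot Vi(v)$, or simply note that the closed graph theorem applies equally to real-linear maps between Fr\'echet spaces. The expression $\mathcal{L}\overline{v}$ is meaningless here since no conjugation is defined on $\E$; you simply want $v\mapsto\int_G \mathcal{L}v(x)\,dx$, which is the composition of $\mathcal{L}$ with the continuous linear functional $\int_G$ on $L^1(G)$. Finally, separability of $\E$ is neither assumed nor needed: the closed graph theorem holds between arbitrary Fr\'echet spaces, and sequential closure of the graph suffices because Fr\'echet spaces are metrizable. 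The treatments of \eqref{eq:40} and \eqref{d} are essentially the same as the paper's (the paper packages your Fubini step as the associativity lemma~\eqref{eq:35}).
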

\begin{proof}
Define the map $\Psi: G\to \E_s'$ by $\Phi(x)=f(x)
\tr{i}(\pi(x)u) $. Since for all $v\in \E$
\[\scal{\Psi(x)}{v}_\E= f(x)  \scal{\pi(x)u}{i(v)}_\hh=f(x)\overline{V i(v)}(x),\]
by \eqref{eq:17} we know that  the map $\Psi$ is
scalarly $\beta$-integrable. Since $\E$ is a Fr\'echet space, then it satisfies the (GDF)
property. Then Theorem~\ref{Gelfand-Dunford} applies,
showing that the scalar integral $\int \Psi(x)dx$ exists and belongs to  $\E'$ (see Section~\ref{scalar_integration}).
We set $\pi(f)u=\int\Psi(x)dx$ and, by definition of scalar integral, \eqref{eq:12} holds true for all $v\in \E$.
Also, for all $x\in G$,
\[ V_e\pi(f)u\, (x)= \int_G f(y) \scal{\pi(y)u}{i(\tau(x)u)}_\hh\,dy
=\int_G f(y) \scal{\pi(y)u}{\pi(x)u}_\hh\,dy=
(f\con \vu)(x).\]
Finally, under the ongoing assumptions, \eqref{eq:35} in the appendix implies that
$(f\con \vu)\con \vu= f\con (\vu \con \vu)= f\con \vu$,
so that~\eqref{d} is a direct consequence of \eqref{eq:40}.
\end{proof}

 If~\eqref{eq:17} is satisfied,
  we say that the Fourier transform of $f$ at $u$ exists in
  $\E'$ or, simply,  that   $\pi(f)u\in \E'$ exists.  Condition~~\eqref{eq:17} is actually
  both  necessary and  sufficient  to define $\pi(f)u$ as an element of $\E'$. 
The next lemma ensures
that the voice transform is reproduced by convolution. 
\begin{lem}\label{rsei} Assume that the extended voice transform is
  injective and take $T\in \E'$. The following assertions are equivalent:
\begin{enumerate}[a)]
  \item\label{c3} $V_eT\con \vu$ exists and satisfies the reproducing formula
\begin{equation}
V_eT\con \vu = V_eT;\label{eq:58}
\end{equation}
 \item\label{e2} for all $x\in G$, the map $y\mapsto
    \scal{T}{\tau(y)u}_\E \scal{\pi(y)u}{\pi(x)u}_\hh $ is in $L^1(G)$ and 
\[ \int_G \scal{T}{\tau(y)u}_\E \scal{\pi(y)u}{\pi(x)u}_\hh dy = \scal{T}{\tau(x)u}_\E.\]
\end{enumerate}
If the Fourier transform of $V_eT$ at $u$  exists in $\E'$, {\em i.e.} the map $x\mapsto
  V_eT(x) \tr{i}(\pi(x)u) $ is scalarly   integrable,  then a) and b) are also
  equivalent to each of the following assertions:
\begin{enumerate}[a)]\setcounter{enumi}{2}
   \item\label{c1} $\pi(V_eT)u=T$;
  \item\label{c2}  the reconstruction formula 
  \begin{equation}
T = \int_G \scal{T}{\tau(x)u}_\E \tr{i}(\pi(x)u) dx\label{eq:41}.
\end{equation}
holds true weakly.
  \end{enumerate}
\end{lem}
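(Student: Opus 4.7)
The plan is to split the proof into the unconditional equivalence (a)$\Leftrightarrow$(b) and the conditional equivalences (a)$\Leftrightarrow$(c)$\Leftrightarrow$(d), using the injectivity of $V_e$ together with Proposition~\ref{due} and the very definition of $\pi(f)u$ as a scalar integral.

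For (a)$\Leftrightarrow$(b), I would simply unwind the convolution. By the definition of $V_e$ we have $V_eT(y)=\scal{T}{\tau(y)u}_\E$, while the unitarity of $\pi$ gives
\[
K(y^{-1}x)=\scal{u}{\pi(y^{-1}x)u}_\hh=\scal{\pi(y)u}{\pi(x)u}_\hh.
\]
Hence for each fixed $x\in G$ the function $y\mapsto V_eT(y)K(y^{-1}x)$ coincides pointwise with the integrand in (b). The $L^1$-integrability of this function is exactly the condition for the convolution $V_eT\con K$ to be defined at $x$, and its integral is the value of that convolution at $x$; thus asking that equality hold pointwise for every $x$ with right-hand side $\scal{T}{\tau(x)u}_\E=V_eT(x)$ is precisely the reproducing identity $V_eT\con K=V_eT$. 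Since $V_eT\in C(G)$, the cosmetic distinction between pointwise and a.e.\ existence is harmless.

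Now suppose additionally that the map $x\mapsto V_eT(x)\tr{i}(\pi(x)u)$ is scalarly integrable, so that $\pi(V_eT)u\in\E'$ exists in the sense of Proposition~\ref{due}. Applying~\eqref{eq:40} to $f=V_eT$ gives the key identity
\[
V_e\pi(V_eT)u=V_eT\con K.
\]
For (a)$\Leftrightarrow$(c): if (a) holds then $V_e\pi(V_eT)u=V_eT\con K=V_eT=V_eT$, and the standing injectivity of $V_e$ forces $\pi(V_eT)u=T$; conversely, (c) yields $V_eT\con K=V_e\pi(V_eT)u=V_eT$, i.e.\ (a). For (c)$\Leftrightarrow$(d): by construction in Proposition~\ref{due}, $\pi(V_eT)u$ is the Gelfand--Dunford integral $\int_G V_eT(x)\,\tr{i}(\pi(x)u)\,dx$, so the identity $T=\pi(V_eT)u$ in $\E'$ is, by definition of the scalar integral tested against arbitrary $v\in\E$, exactly the weak reconstruction formula~\eqref{eq:41}.

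The main obstacle, mild as it is, is bookkeeping: keeping straight the two pairings $\scal{\cdot}{\cdot}_\hh$ and $\scal{\cdot}{\cdot}_\E$ and the convention $u\in i(\E)$ that identifies $u$ with its preimage, so that the formula $V_e\pi(V_eT)u=V_eT\con K$ from Proposition~\ref{due} can be invoked cleanly. Once the computation $K(y^{-1}x)=\scal{\pi(y)u}{\pi(x)u}_\hh$ is made explicit, all four statements collapse to the single reproducing identity $V_eT\con K=V_eT$, read either at the level of functions on $G$ (via $V_e$) or at the level of elements of $\E'$ (via injectivity of $V_e$).
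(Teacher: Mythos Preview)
Your proof is correct and follows essentially the same approach as the paper's own proof: the equivalence (a)$\Leftrightarrow$(b) is obtained by unwinding the definitions of $V_e$ and $K$, the equivalence (a)$\Leftrightarrow$(c) follows from~\eqref{eq:40} combined with the injectivity of $V_e$, and (c)$\Leftrightarrow$(d) is just the definition of the scalar integral. Your write-up is more detailed, but the argument is the same.
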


\begin{proof} The equivalence between \ref{c3}) and \ref{e2}) is just
  the definition of $V_e$ and $K$.  Taking into account that $V_e$ is
  injective, the equivalence between \ref{c3}) and \ref{c1}) follows
  from~\eqref{eq:40} with  $f=V_eT$. The equivalence between \ref{c1})
  and \ref{c2}) is just the definition of scalar integral. 
\end{proof}
In the next  proposition we assume that  the Fourier
transform $\pi(f)u$ exists in $\E'$  for all $f\in \F$, where $\F$ is a
Fr\'echet space satisfying the assumptions of Section~\ref{subsec:target}.
With slight abuse of notation, we write $\pi(f)u$ instead of $\pi(j(f))u$. 
We define the coorbit space
\[
\Co{\E',\F}=\set{T\in \E'\mid V_eT\in j(F)}.
\]
\begin{prop}\label{cuno}
Take $\E\overset{i}{\hookrightarrow} \hh$,  $\F\overset{
j}{\hookrightarrow} \Lz$ and $K(x)=\scal{u}{\pi(x)u}_\hh$  as
above. Assume that  for all $f\in \F$ and all $v\in \E$
 \begin{equation}
    \label{eq:36}
   j(f)  Vi(v)\in L^1(G),
  \end{equation}
which may be rephrased as $V(i(\E))\subset \F^\#$. Then:
\begin{enumerate}[a)]
\item the Fourier transform of any $f\in F$ at $u$ exists in $\E'$,
so does the convolution $ j(f)\con \vu$, and
  \begin{equation}
    \label{eq:37}
    V_e\pi(f)u= j(f)\con \vu;
  \end{equation}
\item the space
 $\mathcal M^\F$, defined by \eqref{eq:11},  is an $\ell$-invariant closed subspace of $\F$,
 $\Co{\E',\F}$ is a $\tr{\tau}$-invariant subspace of $\E'$, and $V_e$
 intertwines $\tr{\tau}$ with $\la$;
\item the map $(f,v)\mapsto \scal{\pi(f)u}{v}_\E$ is continuous from
  $\F\times \E$ into $\C$;
\item if $V_e$ is injective and the reproducing formula~\eqref{eq:58} holds for all
$T\in \Co{\E',\F}$, then
  \begin{subequations}
    \begin{align}
      & V_e\Co{\E',\F}=j(\mathcal M^\F),  \label{eq:38a}\\
      & \set{\pi(f)u\mid f\in \mathcal M^\F}=
      \Co{\E',\F},  \label{eq:38b}\\
      &  V_e\pi(f)u = j(f),  \qquad f\in\mathcal M^\F ,\label{eq:38c}\\
      & \pi( V_eT)u = T, \qquad\quad T\in\Co{\E',\F}.\label{eq:38d}
    \end{align}
  \end{subequations}
Hence, $V_e$ is a bijection of $\Co{\E',\F}$ onto $j(\mathcal M^\F)$ and therefore it induces a bijection,  denoted again by $V_e$, from $\Co{\E',\F}$ onto $\mathcal M^\F$,  whose inverse is the Fourier transform at 
$u$.
\end{enumerate}
\end{prop}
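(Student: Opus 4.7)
The overall strategy is to exploit two immediate consequences of the hypothesis $V(i(\E))\subset\F^\#$: first, since $u\in i(\E)$, the kernel $K=Vu$ lies in $\F^\#$, which is exactly the hypothesis of Proposition~\ref{uno}; second, for every $f\in\F$ and $v\in\E$ the product $j(f)\,Vi(v)$ is in $L^1(G)$, which is condition~\eqref{eq:17} of Proposition~\ref{due} applied to $j(f)$. All four assertions will then follow by combining these two propositions with Lemma~\ref{rsei}, together with one appeal to Banach--Steinhaus.

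Assertion (a) is then almost a direct quotation: Proposition~\ref{due} produces $\pi(f)u\in\E'$ and yields $V_e\pi(f)u=j(f)\con\vu$, while Proposition~\ref{uno}(a) guarantees that the convolution is defined everywhere as a continuous function. For (b), the part about $\mathcal M^\F$ is precisely Proposition~\ref{uno}(c). The intertwining of $V_e$ is a one-line computation from the definition~\eqref{EVT}: writing out $V_e(\tr{\tau}(y)T)(x)=\scal{T}{\tau(y^{-1}x)u}_\E=V_eT(y^{-1}x)=\la(y)V_eT(x)$. The $\tr{\tau}$-invariance of $\Co{\E',\F}$ is then read off from the $\la$-invariance of $j(\F)$, which is ensured by the relation $j\circ\ell(x)=\la(x)\circ j$ postulated on $\F$.

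For (c), I would first establish separate continuity and then invoke the classical fact that a separately continuous sesquilinear form on the product of two Fr\'echet spaces is jointly continuous (a consequence of Banach--Steinhaus in the metrizable setting). Separate continuity in the variable $v\in\E$ is automatic, since $\pi(f)u\in\E'$ for each fixed $f$. Separate continuity in the variable $f\in\F$ comes from the integral representation~\eqref{eq:12}, namely $\scal{\pi(f)u}{v}_\E=\int_G j(f)(x)\,\overline{Vi(v)(x)}\,dx$, combined with Lemma~\ref{ltre} applied to the element $Vi(v)\in\F^\#$, which provides the desired estimate by a semi-norm of $f$. This Banach--Steinhaus step is the one mildly nontrivial ingredient in the whole proof.

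Assertion (d) is a sequence of definition chases once Lemma~\ref{rsei} is available. Given $T\in\Co{\E',\F}$, injectivity of $j$ produces a unique $f\in\F$ with $V_eT=j(f)$, and hypothesis~\eqref{eq:36} ensures that $\pi(V_eT)u=\pi(f)u$ exists in $\E'$ by part~(a), so the second half of Lemma~\ref{rsei} applies; the assumed reproducing formula~\eqref{eq:58} is thus equivalent to $\pi(V_eT)u=T$, giving~\eqref{eq:38d}. Identity~\eqref{eq:38c} for $f\in\mathcal M^\F$ follows by combining~\eqref{eq:37} with the defining equality $j(f)\con\vu=j(f)$. Substituting $V_eT=j(f)$ into~\eqref{eq:58} shows that $f\in\mathcal M^\F$, proving~\eqref{eq:38a}, and~\eqref{eq:38b} is then the combination of~\eqref{eq:38a} with~\eqref{eq:38d}. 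The final bijectivity statement is the conjunction of the injectivity of $V_e$ with the surjectivity onto $j(\mathcal M^\F)$ just established, composed with the canonical bijection $j|_{\mathcal M^\F}\colon\mathcal M^\F\to j(\mathcal M^\F)$ supplied by the embedding property of $j$.
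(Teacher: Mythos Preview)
Your proposal is correct and follows essentially the same route as the paper's proof: items~(a) and~(b) are reduced to Propositions~\ref{due} and~\ref{uno} respectively, item~(c) is handled by establishing separate continuity (via $\pi(f)u\in\E'$ in one variable and Lemma~\ref{ltre} applied to $Vi(v)\in\F^\#$ in the other) and then appealing to the separately-continuous-implies-jointly-continuous theorem for bilinear maps on Fr\'echet spaces, and item~(d) is deduced from Lemma~\ref{rsei} exactly as you outline. Your explicit remark that $K=Vu\in\F^\#$ follows from $u\in i(\E)$ together with the hypothesis $V(i(\E))\subset\F^\#$ is a point the paper leaves implicit, but otherwise the two arguments coincide.
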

\begin{proof} Item~a) is a direct consequence of  Proposition~\ref{due}.
Item~b)  is due to Proposition~\ref{uno}. The
invariance property of $\Co{\E',\F}$ is a consequence of the fact
 that $V_e\tr{\tau}(x)=\la(x)V_e$ for all $x\in G$.
 
 As for c), since $\F$ and $\E$ are Fr\'echet spaces it is enough to
 show that $(f,v)\mapsto \scal{\pi(f)u}{v}_\E$ is separately
 continuous. Clearly, given $f\in \F$, the map $v\mapsto
 \scal{\pi(f)u}{v}_\E$ is continuous since $\pi(f)u\in \E'$. On the
 other hand, given $v$ in $\E$, the hypothesis~\eqref{eq:36} states
 that $Vi(v)\in \F^\#$. Lemma~\ref{ltre} shows that $Vi(v)\in \F'$
 where the identification is given by~\eqref{eq:9}, namely
\[ \scal{Vi(v)}{f}_\F = \int_G Vi(v)(x)
\overline{ f(x)}\,dx=\overline{\int_G f(x) \scal{\pi(x)u}{i(v)}_\hh\,dx} =\overline{\scal{\pi(f)u}{v}_\E},\]
so that $f\mapsto \scal{\pi(f)u}{v}_\E$ is continuous.

Finally, we prove d). The definition of
$\mathcal M^\F$ and \eqref{eq:37} imply~\eqref{eq:38c}.  
 Given $T\in \Co{\E';\F}$, by definition $V_eT\in j(\F)$ and, hence, the
convolution $V_eT\con K$ exists. Furthermore,  by assumption $V_eT\con
K =V_eT $. Hence, condition~\ref{c3}) of Lemma~\ref{rsei} is satisfied
and this implies that $\pi(V_eT)u=T$, which is~\eqref{eq:38d}.   To
prove~\eqref{eq:38a} and~\eqref{eq:38b}, observe that the reproducing
formula implies that $V_e\Co{\E';\F}\subset j(\mathcal M^\F)$ and
equality~\eqref{eq:38d}  that $\Co{\E',\F}\subset 
\set{\pi(f)u\mid f\in \mathcal M^\F}$. Furthermore,  since $\mathcal
M^\F\subset\F$,  implies 
$\set{\pi(f)u\mid f\in \mathcal M^\F}\subset \Co{\E',\F}$ and, hence,
$j(\mathcal M^\F)\subset V_e\Co{\E',\F}$.  
\end{proof}
The above result is an adaptation of Theorem 2.3 of
\cite{chol11}. Conditions (R3)  and (R4) in \cite{chol11} are
replaced by~\eqref{eq:36} and the reproducing property~\eqref{eq:58}, respectively.

 Under all the assumptions of Proposition~\ref{cuno}, in particular
 the conditions of item d), the space $\Co{\E',\F}$ has a natural
 topology that makes it a Fr\'echet space.
\begin{cor}\label{topology}
The space $\Co{\E',\F}$ is a Fr\'echet space with respect to any of the following
equivalent topologies:
\begin{enumerate}[a)]
\item the topology induced by the family of semi-norms
  $\set{q_i(V_e(\cdot))}_i$, where $\set{q_i}_i$ is any fundamental family of
  semi-norms of $\F$;
\item the initial topology induced from the topology of $\F$ by the restriction of the voice $V_e$;
\item the final topology induced from the topology of $\F$, restricted
  to $\mathcal M^\F$, by the Fourier transform at $u$.
\end{enumerate}
\end{cor}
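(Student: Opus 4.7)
The plan is to leverage the fact, established in Proposition~\ref{cuno}(d), that under the stated hypotheses the extended voice transform restricts to a bijection
\[
V_e:\Co{\E',\F}\longrightarrow \mathcal M^\F,
\]
whose inverse is the Fourier transform at $u$, namely $f\mapsto \pi(f)u$. Since $\mathcal M^\F$ is a closed subspace of the Fr\'echet space $\F$ (Proposition~\ref{uno}(c)), it is itself Fr\'echet with the relative topology, and the whole point is to transport this structure to $\Co{\E',\F}$ via the bijection $V_e$ and then recognize the three descriptions (a), (b), (c) as the same topology.

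First I would fix a fundamental saturated family $\{q_i\}_i$ of continuous seminorms on $\F$. By restriction these induce the Fr\'echet topology on $\mathcal M^\F$. Pulling them back through $V_e$ gives the family of seminorms $T\mapsto q_i(V_eT)$ on $\Co{\E',\F}$; because $V_e$ is a bijection onto $\mathcal M^\F$, this family is separating, and completeness of $\Co{\E',\F}$ with respect to this metrizable structure follows at once from the completeness of $\mathcal M^\F$: a Cauchy sequence $(T_n)$ in $\Co{\E',\F}$ maps by definition to a Cauchy sequence $(V_eT_n)$ in $\mathcal M^\F$, whose limit $f\in\mathcal M^\F$ has the form $f=V_eT$ with $T=\pi(f)u\in\Co{\E',\F}$ thanks to \eqref{eq:38b}--\eqref{eq:38c}. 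So (a) defines a Fr\'echet topology, and by construction this is nothing but the initial topology pulled back from $\F$ through $V_e$, which is (b).

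For the equivalence with (c), I would observe that (c) is the final (quotient) topology with respect to the map $\pi(\cdot)u:\mathcal M^\F\to\Co{\E',\F}$, which by Proposition~\ref{cuno}(d) is the set-theoretic inverse of $V_e$. A set $U\subset\Co{\E',\F}$ is open for (c) iff $(\pi(\cdot)u)^{-1}(U)=V_e(U)$ is open in $\mathcal M^\F$, which is exactly the condition for $U$ to be open in the initial topology (b). Thus (b) and (c) coincide.

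The only real point to verify carefully is that the pulled-back topology is actually Hausdorff and complete, and that $V_e$ is a linear homeomorphism and not merely a linear bijection; both follow directly from the bijectivity of $V_e$ onto the Fr\'echet space $\mathcal M^\F$, so there is no genuine obstacle beyond writing these identifications out. I do not anticipate any technical difficulty, since all the substantive work (injectivity of $V_e$, reproducing formula, description of the range) has already been carried out in Propositions~\ref{uno} and~\ref{cuno}.
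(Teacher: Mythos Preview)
Your proof is correct and follows essentially the same route as the paper's: both use the bijection $V_e:\Co{\E',\F}\to\mathcal M^\F$ from Proposition~\ref{cuno}(d) and the fact that $\mathcal M^\F$ is a closed subspace of the Fr\'echet space $\F$ (Proposition~\ref{uno}(c)) to transport the Fr\'echet structure, then observe that (a)$\Leftrightarrow$(b) is standard and (b)$\Leftrightarrow$(c) follows from $V_e$ and $\pi(\cdot)u$ being mutually inverse bijections. Your version is merely more explicit in spelling out the Cauchy-sequence completeness argument and the open-set characterization for the initial/final topology equivalence.
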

\begin{proof}
By Proposition~\ref{cuno}, $V_e$ is a bijection from $\Co{\E',\F}$ onto $\mathcal M^\F$ whose inverse is the Fourier transform at $u$,
the initial and final topologies on $\Co{\E',\F}$ coincide
and they realize $\Co{\E',\F}$ as a Fr\'echet space (isomorphic to $\mathcal M^\F$).
The equivalence between a) and b) is a standard result (see remark before Example~4 Ch. 2.11 of \cite{hor66}). The equivalence between b) and c) follows from the fact that $V_e$ is a bijection.
Since $\mathcal M^\F$ is a closed subspace of a Fr\'echet space, then both
$\mathcal M^\F$  and $\Co{\E',\F}$ are (isomorphic) Fr\'echet spaces.
\end{proof}

\subsection{Reproducing representations: the standard setup}\label{sec:reprod}
In this section, we  further assume that $\pi$ is 
a {\it reproducing} representation and   that  the vector $u$ is  an
admissible vector for $\pi$.  This means that the voice transform $V$
maps $\hh$ into $L^2(G)$ and that for all $v\in\hh$ 
\begin{equation}
  \label{eq:8}
  \nor{v}_\hh = \nor{Vv}_2.
\end{equation}
To stress that  the voice transform is an isometry of $\hh$ into $L^2(G)$, we write it with the suffix $2$:
\[
V_2: \hh\to L^2(G),\qquad V_2v(x)=\scal{v}{\pi(x)u}_\hh.
\]
Recalling that $K=V_2u$ and~\eqref{eq:32}, we have
  \begin{equation}
   \widecheck{\vu}=\overline{\vu}\in L^2(G). \label{eq:1}
  \end{equation}
In the following proposition, some consequences of
    the assumption that $\pi$ is reproducing are drawn. The results are 
    well known for irreducible representations \cite{dumo76,grmopa85} and their
     extensions to non-irreducible representations are taken for granted in many
    papers. We provide a proof based on Proposition~\ref{cuno}.

  \begin{prop}\label{tre} Suppose that $\pi$ is a reproducing representation of $G$ on $\hh$
  and that $u\in\hh$ is an admissible vector. Then:
\begin{enumerate}[a)]
\item  for every $f\in L^2(G)$, the Fourier transform of $f$ at $u$ exists in
$\hh$ and for all $v\in\hh$
\[ \scal{\pi(f)u}{v}_\hh=\scal{f}{V_2v}_2;\] 
\item  for every $f\in L^2(G)$ the convolution
$f\con \vu$ exists and
\begin{equation}
  \label{eq:13}
  V_2 \pi(f)u= f\con \vu,
\end{equation}
where both sides belong to $C_0(G)$ and, for every $v\in\hh$,
  \begin{equation}
    V_2v \con K = V_2v  \label{eq:2};
  \end{equation}
  in particular, $K=K\con K$.
\item the space
\[ \mathcal M^2=\set{f\in L^2(G)\mid f\con \vu=f}\] is a
$\la$-invariant closed subspace of $L^2(G)$ and
\begin{align}
  &  V_2 \hh= \mathcal M^2;  \label{eq:18} \\
  &   V_2 \pi(f)u = f,\qquad \text{for all } f\in \mathcal M^2; \label{eq:19}\\
  & \pi(V_2v)u = v,\qquad\text{for all } v\in \hh.\label{eq:20}
\end{align}
Hence,  the voice transform $V_2$ is a unitary map from $\hh$ onto $\mathcal M^2$ whose
inverse is given by the map $f\mapsto \pi(f)u$.
\end{enumerate}
\end{prop}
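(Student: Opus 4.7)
The plan is to specialize Proposition~\ref{cuno} to the choices $\E=\hh$ (with $i=\operatorname{id}$ and $\tau=\pi$) and $\F=L^2(G)$ (with $j=\operatorname{id}$ and $\ell=\lambda$), and then to read the three statements (a)--(c) off from that general result together with one direct computation of the $L^2$-reproducing formula. Note that $L^2(G)^\#=L^2(G)$ by Cauchy--Schwarz, while $u$ admissible means exactly $K=V_2u\in L^2(G)$, so $K\in L^2(G)^\#$. For any $f\in L^2(G)$ and $v\in\hh$ a further application of Cauchy--Schwarz gives $f\cdot V_2v\in L^1(G)$, which is condition~\eqref{eq:36}. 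Thus Proposition~\ref{cuno}(a) already delivers part (a): $\pi(f)u\in\E'=\hh$ exists and
\[
\scal{\pi(f)u}{v}_\hh=\int_G f(x)\overline{V_2v(x)}\,dx=\scal{f}{V_2v}_2,
\]
together with the identity $V_2\pi(f)u=f\con K$ contained in the first half of (b).

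The remaining statement in (b), the reproducing formula $V_2v\con K=V_2v$ for every $v\in\hh$, is where the isometry property of $V_2$ enters and must be proved by hand. The key rewriting is $K(y^{-1}x)=\scal{u}{\pi(y^{-1}x)u}_\hh=\scal{\pi(y)u}{\pi(x)u}_\hh$. Substituting into $(V_2v\con K)(x)=\int_G V_2v(y)\,K(y^{-1}x)\,dy$ and using that $V_2$ is an isometry (hence preserves scalar products by polarization) I obtain $\scal{V_2v}{V_2\pi(x)u}_2=\scal{v}{\pi(x)u}_\hh=V_2v(x)$. Taking $v=u$ yields $K\con K=K$ in particular. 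The claim that both sides of $V_2\pi(f)u=f\con K$ lie in $C_0(G)$ follows by the standard density argument: $f\con K$ is continuous by Proposition~\ref{uno}(a), and approximating $f$ in $L^2$ by elements of $C_c(G)$ produces a uniform approximation of $f\con K$ by continuous functions vanishing at infinity.

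Finally, for part (c), I observe that $V_e$ is injective (it coincides with $V_2$, which is an isometry, so by Lemma~\ref{b} the orbit $\pi(G)u$ is total in $\hh$), and the reproducing formula has just been verified on all of $\operatorname{Co}(\hh,L^2(G))=\hh$. Hence Proposition~\ref{cuno}(d) applies and directly produces $V_2\hh=\mathcal M^2$, $\{\pi(f)u:f\in\mathcal M^2\}=\hh$, $V_2\pi(f)u=f$ for $f\in\mathcal M^2$, and $\pi(V_2v)u=v$ for $v\in\hh$. That $\mathcal M^2$ is a closed $\lambda$-invariant subspace is Proposition~\ref{uno}(c). Since $V_2$ is isometric onto $\mathcal M^2$ it is unitary with inverse $f\mapsto\pi(f)u$. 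The principal obstacle I foresee is not the application of the general machinery (which runs automatically once the hypotheses are checked) but the explicit verification of the reproducing identity $V_2v\con K=V_2v$ for arbitrary $v\in\hh$; this single computation is where admissibility is essentially used, and it also underwrites the $C_0$ statement in (b).
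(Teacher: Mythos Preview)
Your proof is correct and follows essentially the same route as the paper: first verify the reproducing identity $V_2v\con K=V_2v$ directly from the isometry of $V_2$, then apply Proposition~\ref{cuno} with $\E=\hh$ and $\F=L^2(G)$ to obtain everything else. The only minor difference is the $C_0(G)$ claim, which the paper gets in one stroke by citing the $L^2\times L^2\to C_0$ convolution bound~\eqref{eq:6} (with $\widecheck{K}\in L^2(G)$ from~\eqref{eq:1}); your density argument ultimately needs the same estimate both to place $f_n\con K$ in $C_0(G)$ and to get uniform convergence, so citing~\eqref{eq:6} directly is cleaner.
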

\begin{proof} We first prove \eqref{eq:2}.
By~\eqref{eq:6} with $p=q=2$, the convolution
  $V_2v \con  \vu$ exists and is in $C_0(G)$ because $\widecheck{\vu}\in
  L^2(G)$ by~\eqref{eq:1}.  Furthermore, given $x\in G$, for all $y\in G$
  \[V_2v(y) \,\la(x)\widecheck{\vu}(y)= V_2v(y)\, \overline{\la(x)
    \vu(y)}=V_2v(y)\, \overline{ (V_2 \pi(x) u)(y)}.\] 
Integrating with respect to  $y$, we  obtain
  \[ V_2v\con \vu = \scal{V_2v}{V_2\pi(x)u}_{2}=
  \scal{v}{\pi(x)u}_\hh=V_2v.\] 
To prove the remaining statements,
we apply  Proposition~\ref{cuno} with $\F=L^2(G)$ and $\E=\E'=\hh$, with the understanding that $i$ and $j$ are the canonical inclusions, $\la=\ell$, $\pi=\tau$ and $V=V_2$. Observe that  \eqref{eq:36} is
satisfied since $V_2\hh\subset L^2(G)=L^2(G)^\#$ and, by
\eqref{eq:2}, the
reproducing formula~\eqref{eq:58} holds for every $v\in\hh$, regarded
as anti-linear form on $\hh$. Furthermore, by~\eqref{eq:6} in the appendix  with $p=q=2$,
for all $f\in L^2(G)$ the function $f\con \vu$ is in $C_0(G)$, taking~\eqref{eq:1} into account.
\end{proof}

\section{Main results}

In this section, we assume that the representation $\pi$ is  reproducing 
and that the vector $u\in\hh$ is  admissible, as in Section~\ref{sec:reprod}.
We will construct a coorbit space theory based on the choice of a
suitable target space $\T$ embedded in $\Lz$.

\subsection{The space of test functions and distributions}\label{testanddist} 
We choose  a Fr\'echet space $\T$ with 
\begin{enumerate}[i)]
\item a continuous embedding $j:\T\to L^0(G)$;
\item  a continuous representation $\ell$ of $G$ acting on $\T$ such that
  $j\ell(x)=\la(x)j$ for all $x\in G$;
\item a continuous involution $f\mapsto \overline{f}$ such that $\overline{j(f)}=j({\overline{f}})$,
\end{enumerate}
so that $\T$ enjoys  all the properties  of the space $\F$  in
Section~\ref{sec:target}, from which we adopt the notations.
In particular, as in \eqref{eq:11}, we put
\[ 
\mathcal M^\T=\set{f\in \T\mid j(f)\con \vu=j(f)} . 
\]
The classical theory corresponds to the choice $\T=L^1(G)$, or a
weighted version of it. The following assumptions are at the root of our
construction and are trivially satisfied for $L^1(G)$. 

\begin{ass}\label{H1}
The kernel $K$ is in $j(\T)$ and $j(f) \vu \in L^1(G)$ for all
$f\in\T$, {\em i.e} $K\in j(\T)\cap\T^\#$.
\end{ass}
\begin{ass}\label{H2} For all $f\in\mathcal M^\T$  and all $v\in\hh$ we have $j(f) V_2v \in L^1(G)$, {\em i.e} 
$V_2\hh\subset (\mathcal M^\T)^\#$.
\end{ass}
\begin{ass}\label{H3}
The linear space spanned by the orbit $\set{\ell(x)K\mid x\in G}$ is  dense in $\mathcal M^\T$.
\end{ass} 
By Proposition~\ref{uno},  Assumption~\ref{H1} implies
that for all $f\in\T$ the convolution $j(f)\con K$ exists and   $\mathcal M^\T$
is a closed $\ell$-invariant subspace of $\T$,
so  that $\lspan{\ell(x)K\mid x\in G}$  is a subspace of  $\mathcal
M^\T$. Assumption~\ref{H3} is formulated with a slight abuse of notation,  regarding $K$ as
an element of $\T$. It is a strengthening of Assumption~\ref{H1} because it is equivalent to the 
requirement that $K$ is actually a  cyclic vector for the representation $\ell$ restricted to $\mathcal M^\T$. 

Assumptions~\ref{H1} and~\ref{H2} should be compared with
hypotheses~(R2) and (R3) of \cite{chol11}. In our approach
they are needed to define the test space, as in the classical
setting, whereas in \cite{chol11} the test space is given a-priori. 
\noindent 
We are now in a position to define the space of {\it test signals}, namely 
\begin{equation}
 \Ss= \set{v\in \hh \mid V_2v\in j(\T)}.
 \label{test}
 \end{equation}
 We define the {\it restricted voice transform} $V_0:\Ss\to \T$ as
the unique map satisfying $jV_0=V_2i$, that is, for all $v\in\Ss$ and $x\in G$ we put
\[  (V_0v)(x)=\scal{i(v)}{\pi(x)u}_\hh  \]
where $i: \Ss\to\hh$ is the canonical inclusion.
 It is by means of $V_0$ that we topologize $\Ss$: 
we endow $\Ss$ with the  initial topology induced by
$V_0$. As it will be shown in Theorem~\ref{tuno} below,  this is
just an explicit description of the topology that $\Ss$ naturally inherits
as coorbit space, because $\Ss=\Co{\hh,\T}$.
 Observe that Assumption~\ref{H1} implies that  $u\in\Ss$, since
$K\in j(\T)$. 
\begin{thm}\label{tuno}
 The space $\Ss$ is a Fr\'echet space
isomorphic to $\mathcal{M}^\T$ via $V_2$, and $ j(\mathcal{M}^\T) \subset L^2(G) $.
 The canonical embedding
 $i:\Ss\to\hh$ is continuous and   has dense range. The transpose $\tr{i}:\hh_s\to\Ss'_s$ is continuous, injective and has dense range. The representation   $\pi$ leaves  $\Ss$ invariant, its restriction $\tau$ to $\Ss$ is a
continuous  representation of $G$ acting on $\Ss$ and $u$ is a cyclic
vector of $\tau$.
\end{thm}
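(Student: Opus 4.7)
The plan is to identify $\Ss$ with $\mathcal{M}^\T$ via $V_0$ at the level of Fr\'echet spaces, and then read off every remaining assertion from this identification together with the isometry $V_2:\hh\to L^2(G)$. The main obstacle I foresee is proving the inclusion $j(\mathcal{M}^\T)\subset L^2(G)$: this is what bridges the abstract target-space picture with the Hilbert-space reproducing framework of Section~\ref{sec:reprod}, and without it $V_0$ would fail to be surjective onto $\mathcal{M}^\T$ and the whole identification would collapse.

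To establish $j(\mathcal{M}^\T)\subset L^2(G)$, I fix $f\in\mathcal{M}^\T$ and apply Proposition~\ref{due} with $\E=\hh$ (so that $i$ is the identity, $V=V_2$, and $\E'=\hh$). Assumption~\ref{H2} is precisely the hypothesis \eqref{eq:17} for $j(f)$, so $w:=\pi(j(f))u$ exists in $\hh$ and $V_2w=j(f)\con K$ by~\eqref{eq:40}. Since $f\in\mathcal{M}^\T$ the right-hand side equals $j(f)$, so $j(f)=V_2w\in L^2(G)$. As a byproduct, $w\in\Ss$ with $V_0w=f$, hence $V_0:\Ss\to\mathcal{M}^\T$ is surjective; injectivity is immediate from the isometry property of $V_2$. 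Since $\Ss$ carries the initial topology induced by $V_0$ and $\mathcal{M}^\T$ is a closed subspace of the Fr\'echet space $\T$ (Proposition~\ref{uno}c)), $\Ss$ becomes a Fr\'echet space and $V_0$, equivalently $V_2$ via $V_2=jV_0$, is the claimed isomorphism.

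The embedding $i:\Ss\to\hh$ is continuous because $\nor{i(v)}_\hh=\nor{V_2v}_2=\nor{jV_0v}_2$, while $j:\mathcal{M}^\T\to L^2(G)$ is continuous by the closed graph theorem (both target spaces embed continuously into $L^0(G)$). For density of $i(\Ss)$ in $\hh$, note that $u\in\Ss$ (since $K\in j(\T)$ by Assumption~\ref{H1}) and that the identity $V_2\pi(x)v=\la(x)V_2v=j(\ell(x)V_0v)$ shows both that $\Ss$ is $\pi$-invariant and that the orbit $\pi(G)u$ lies in $i(\Ss)$; since any $v$ orthogonal to $\pi(G)u$ satisfies $V_2v\equiv 0$ and hence $v=0$, the orbit is total in $\hh$, and density follows. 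The same intertwining $V_0\tau(x)=\ell(x)V_0$ transports the continuous $\ell$-action on $\mathcal{M}^\T$ to a continuous representation $\tau$ on $\Ss$, and reads Assumption~\ref{H3} as cyclicity of $u$ for $\tau$, because $V_0u$ is the element of $\mathcal{M}^\T$ whose image under $j$ is $K$.

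For the transpose $\tr{i}:\hh_s\to\Ss'_s$, continuity between the weak and weak-$\ast$ topologies is automatic for the transpose of a continuous linear map. Injectivity follows from the density of $i(\Ss)$ in $\hh$: if $\tr{i}(w)=0$ then $\scal{w}{i(v)}_\hh=0$ for every $v\in\Ss$, whence $w=0$. Density of the range in $\Ss'_s$ is a standard Hahn--Banach argument: the continuous dual of $\Ss'_s$ is $\Ss$ itself, so if $v\in\Ss$ annihilates $\tr{i}(\hh)$ then $\scal{w}{i(v)}_\hh=0$ for all $w\in\hh$, forcing $i(v)=0$ and hence $v=0$ by the injectivity of $i$.
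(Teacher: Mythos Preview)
Your proof is correct and follows essentially the same route as the paper. The paper packages the identification $\Ss\simeq\mathcal{M}^\T$ via Proposition~\ref{cuno} and Corollary~\ref{topology} (with $\E=\hh$, $\F=\T$), whereas you invoke Proposition~\ref{due} directly to produce $\pi(j(f))u\in\hh$ for each $f\in\mathcal{M}^\T$; since Proposition~\ref{cuno}~d) is itself proved via Proposition~\ref{due}, this is just one layer of unwinding. Your remaining steps---closed graph for the continuity of $j:\mathcal{M}^\T\to L^2(G)$, density via the orbit $\pi(G)u$, cyclicity from Assumption~\ref{H3}, and the Hahn--Banach argument for the transpose---match the paper's arguments, with the only cosmetic difference that the paper cites Bourbaki for the properties of $\tr{i}$ while you spell them out.
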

\begin{proof} We first prove that $\Ss$ is a Fr\'echet space.
Let $\E=\hh$ and $\F=\T$. By the properties i), ii) and iii) stated at
the beginning of this section, we are in the general setting
of Section~\ref{sec:ext}. Observe that $\hh'=\hh$, 
$V_e=V_2$ and  clearly $\Ss=\Co{\hh,\T}$. 
Furthermore, the fact that $\pi$ is reproducing
implies that $V_2$ is injective and, by~\eqref{eq:2} in
Proposition~\ref{tre}, the reproducing formula holds true for all
$v\in\hh$. Hence  $V_2v\in j(\mathcal
M^\T)$ for all $v\in\Ss$,  and we actually get  $\Ss=\Co{\hh,\mathcal M^\T}$.
We can
apply Corollary~\ref{topology} because the hypotheses  of 
Proposition~\ref{cuno}  that imply it are both satisfied: \eqref{eq:36} is just
Assumption~\ref{H2} and, as already noticed, the reproducing property holds for all
$v\in\Ss$ because  $\pi$ is reproducing. Hence $\Ss$ is a
Fr\'echet space and $V_0$ induces a topological linear  isomorphism from
$\Ss$  onto $\mathcal M^\T$.
Since $V_2\hh\subset L^2(G)$, clearly $j(\mathcal M^\T)\subset L^2(G)$.

Since $\Ss$ and $\mathcal M^\T$ are isomorphic, in order  to show that
 $i$ is continuous it is enough to prove that $j$ is continuous from
 $\mathcal M^\T$ into  $L^2(G)$. Both are Fr\'echet spaces, 
hence it is sufficient to show that $j:\mathcal M^\T\to L^2(G)$
has sequentially closed graph.  If $(f_n)_n$ is a sequence in  $\mathcal M^\T$
converging to $f$ in $\mathcal M^\T$  and $(j(f_n))_n$ converges  to $\varphi$ in $L^2(G)$,
then possibly passing to a subsequence, we can assume that  $(f_n(x))_n$
converges for almost all $x\in G$. Hence $\varphi(x)=f(x)$ almost
everywhere.

Item b) of Proposition~\ref{cuno}
gives that $\pi$ leaves $\Ss$ invariant. Since for all $x\in G$ and $v\in\Ss$
\[ 
V_0\tau(x)v=\ell(x)V_0v,
\]
the restriction $\tau$ is  a continuous representation on $\Ss$ because $\ell$
is a continuous  representation on~$\T$.
The fact that $\pi$ is reproducing
implies that $\lspan{\pi(x)u\mid x\in G}\subset\Ss$ is dense in $\hh$,
so that $i$ has dense range. 

Finally, since $V_0\tau(x)u=\ell(x)K$ for all $x\in G$,
Assumption~\ref{H3} is another way of saying that $u$ is a cyclic vector for $\tau$.  As
for the properties of $\tr{i}$, Corollary~3 Chapter II.6.3 of
\cite{BTVS} shows that $i$ is continuous from $\Ss_{weak}$ into $\hh_{weak}$.
Hence, Corollary of Proposition~5 Chapter II.6.4 of \cite{BTVS} gives
that $\tr{i}$ is continuous from $\hh_s=\hh_{weak}$ into $\Ss_s'$ and
$\tr{\,(\!\tr{i})}=i$. Finally, Corollary 2 Chapter II.6.4 of
\cite{BTVS} shows that since $i$ is injective and has dense range,
$\tr{i}$ has the same properties.
\end{proof}
As shown in the above proof, all the hypotheses 
of Corollary~\ref{topology} are satisfied.  This implies that, whenever a fundamental family $\set{q_i}_i$ of
semi-norms of $\T$ is given, then $\set{q_i(V_0(\cdot))}_i$ is a fundamental family of
semi-norms of $\Ss$. This is yet another way to get a direct handle on its topology
when a family of seminorms  of $\T$ is  known.

We regard the dual $\Ss'$ of $\Ss$ as  the space of {\it
  distributions} and we define the {\it extended voice transform} on it
by setting for all $T\in\Ss'$
\begin{equation}
V_e:\Ss'\to C(G),\qquad  V_eT=\scal{T}{\tau(\cdot)u}_\Ss.
\label{EVT2}
\end{equation}
The definition works because $\Ss$ is $\tau$-invariant, $u\in\Ss$ and
$\tau$ is a continuous representation. The following theorem states the
main properties of $V_0$ and $V_e$. We recall that 
the contragredient representations $\tr{\tau}$ and $\tr{\ell}$ are continuous
representations acting  on $\Ss'$ and $\T'$, respectively, where the
dual spaces are endowed with the topology of the convergence
on compact subsets (see  Proposition~3 Chapter VIII.2.3 of \cite{BINT2}).
Furthermore, since $\pi$ is a reproducing representation,
Proposition~\ref{tre} ensures that for all $f\in L^2(G)$
the Fourier transform of $f$ at $u$ exists in $\hh$.
\begin{thm}\label{lsei}
The restricted voice transform $V_0$ is an injective strict morphism\footnote{A
  strict morphism is a continuous linear map whose image is closed.}
from $\Ss$ into $\T$ with image  $\mathcal M^\T$. For all $f\in\mathcal M^\T$,  we have
\[ 
\pi(f)u\in\Ss, \qquad V_0\pi(f)u=f
\]
 and,  for all $v\in\Ss$, we have
 \[\pi(V_0v)u=v.\]
Furthermore, $V_0$ intertwines  $\tau$ and $\ell$ and 
its transpose $\tr{\,V_0}: \T'_s\to \Ss'_s$ is a surjective continuous
map,  intertwining the representations $\tr{\ell}$ and $\tr{\tau}$.

The  extended voice transform $V_e$ intertwines $\tr{\tau}$ with $\la$,  is injective and continuous from $\Ss'$ to $C(G)$, where both spaces are endowed with the topology of compact  convergence.
Finally, for all $\Phi\in \T^\#\subset \T'$, we have
\begin{equation}
  \label{eq:22}
  V_e\tr{\,V_0}\Phi=\Phi\con \vu.
\end{equation}
\end{thm}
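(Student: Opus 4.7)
The plan is to derive every assertion by assembling Theorem~\ref{tuno} and Proposition~\ref{cuno} (applied with $\E=\hh$ and $\F=\T$, whose hypotheses are in force thanks to Assumptions~\ref{H1}--\ref{H3} and Proposition~\ref{tre}) together with essentially formal transpose and Hahn--Banach arguments.

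First I would handle the claims on $V_0$ itself. Theorem~\ref{tuno} already identifies $V_0$ as a topological isomorphism of $\Ss$ onto $\mathcal{M}^\T$, and Proposition~\ref{uno}(c) says $\mathcal{M}^\T$ is closed in $\T$; together these say $V_0$ is an injective strict morphism $\Ss\to\T$ with image $\mathcal{M}^\T$. The equalities $V_0\pi(f)u=f$ for $f\in\mathcal{M}^\T$ and $\pi(V_0v)u=v$ for $v\in\Ss$ are then exactly \eqref{eq:38c} and \eqref{eq:38d} of Proposition~\ref{cuno}(d), whose assumptions are met because $V_2$ is injective (reproducing) and the reproducing formula holds on all of $\hh$ by Proposition~\ref{tre}. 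The intertwining $V_0\tau(x)=\ell(x)V_0$ is immediate from $V_2\pi(x)=\lambda(x)V_2$ combined with the defining relation $jV_0=V_2i$ and the injectivity of $j$.

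Next I would turn to the transpose $\tr{V_0}\colon\T'_s\to\Ss'_s$. Weak-*-to-weak-* continuity of the transpose of a continuous linear map is standard, and the intertwining $\tr{V_0}\tr{\ell}(x)=\tr{\tau}(x)\tr{V_0}$ follows by transposing the intertwining for $V_0$. For the surjectivity, the point is that $V_0$ is an injective strict morphism with closed image $\mathcal{M}^\T\subset\T$, so every continuous antilinear form on $\Ss$ pulls back via $V_0^{-1}$ to one on $\mathcal{M}^\T$; the Hahn--Banach theorem for locally convex spaces then extends it to all of $\T$, producing a preimage under $\tr{V_0}$.

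For the extended voice transform, the intertwining $V_e\tr{\tau}(x)=\lambda(x)V_e$ is a one-line computation from \eqref{EVT2} using $\tau(x^{-1}y)=\tau(x^{-1})\tau(y)$. Injectivity of $V_e$ is Lemma~\ref{b} combined with cyclicity of $u$ for $\tau$, which was established in Theorem~\ref{tuno} out of Assumption~\ref{H3}. For continuity with respect to the topologies of compact convergence, I would observe that if $\mathcal{K}\subset G$ is compact, then $\tau(\mathcal{K})u\subset\Ss$ is compact (it is the continuous image of $\mathcal{K}$ under $x\mapsto\tau(x)u$), so the seminorm $T\mapsto\sup_{x\in\mathcal{K}}|V_eT(x)|$ is dominated by the seminorm on $\Ss'$ attached to the compact set $\tau(\mathcal{K})u$.

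Finally, for \eqref{eq:22}, unwinding the definitions gives $V_e\tr{V_0}\Phi(x)=\scal{\Phi}{V_0\tau(x)u}_\T=\scal{\Phi}{\ell(x)K}_\T$ for $\Phi\in\T^\#$ and $x\in G$; using the integral representation of $\Phi$ from Lemma~\ref{ltre} and the symmetry $\overline{K}=\widecheck{K}$ from \eqref{eq:32}, this rewrites as $\int_G\Phi(y)K(y^{-1}x)\,dy=\Phi\con K\,(x)$. I expect the main obstacle to be the surjectivity of $\tr{V_0}$: it requires a clean appeal to Hahn--Banach in the Fréchet setting together with the fact that $V_0$ has closed range; everything else is bookkeeping on top of Proposition~\ref{cuno} and Theorem~\ref{tuno}.
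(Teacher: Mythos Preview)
Your proposal is correct and follows essentially the same route as the paper: both rely on Theorem~\ref{tuno} for the isomorphism $\Ss\simeq\mathcal{M}^\T$, on the cyclicity of $u$ (Assumption~\ref{H3}) for injectivity of $V_e$, on the compactness of $\tau(\mathcal{K})u$ for continuity, and on unwinding the pairing for~\eqref{eq:22}. The only cosmetic differences are that the paper derives $V_0\pi(f)u=f$ and $\pi(V_0v)u=v$ directly from Proposition~\ref{tre} rather than by quoting Proposition~\ref{cuno}(d), and cites Bourbaki for the surjectivity of $\tr{V_0}$ where you spell out the Hahn--Banach argument; these are equivalent.
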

\begin{proof} 
By Theorem~\ref{tuno}, $V_0$ induces a topological linear isomorphism from
$\Ss$  onto $\mathcal M^\T$, which is a closed subspace of $\T$.
  Corollary~1, Chapter II.4.2 of \cite{BTVS} gives that $\tr{\,V_0}$ is
  surjective. By Corollary of Proposition~5,  Chapter II.6.4 of~\cite{BTVS},
the map  $\tr{\,V_0}$ is continuous when both $\T'$ and
  $\Ss'$ are equipped with the topology of the simple convergence. 

Since $\pi$ is reproducing  and $j(f)\in L^2(G)$ for all
$f\in\mathcal M^\T$, Proposition~\ref{tre} shows
that $V_2\pi(j(f))u= j(f)\in j(\T)$. Hence, by definition of $\Ss$, $\pi(j(f))u\in\Ss$
and the construction of $V_0$ gives that $V_0\pi(f)u=f$, where, with slight abuse of notation,
$\pi(f)u$ is the Fourier transform of $j(f)$ at $u$. Take now
$v\in\Ss$. Since
$V_0v\in\mathcal M^\T$,   again Proposition~\ref{tre}  yields $\pi(V_0v)u=v$.

The intertwining property is straightforward: for any  $x,y\in G$
\[ 
\left(V_e\tr{\tau}(x)T\right)(y)= \scal{T}{\tau(x^{-1})\tau(y)u}_\Ss = V_eT(x^{-1}
y).\]
Injectivity is due to the fact that $u$ is cyclic for $\tau$.
To prove that $V_e$ is continuous, fix a compact subset $Q$ of $G$.
Since $x\mapsto \tau(x)u$ is continuous, the set $A=\tau(Q)u$ is
compact in $\Ss$,  and $T\mapsto \sup_{v\in
  A}\abs{\scal{T}{v}_\Ss}$ is continuous on $\Ss'$. 
  Finally, take $\Phi\in \T^\#$. Then for all $x\in G$
\[
V_e\tr{\,V_0} \Phi(x)=\scal{\Phi}{V_0\tau(x)u}_\T
=\int_G \Phi(y)\overline{\scal{\pi(x)u}{\pi(y)u}_{\mathcal H}}dy
= (\Phi\con V_2u)(x). \qedhere
\]
\end{proof}
We add a remark  on the finer  topological properties of $V_e$. 
If $B$ is a bounded subset of $\Ss'$ or, equivalently, of $\Ss'_s$, 
the restriction of $V_e$ to $B$, endowed with the topology of $\Ss'_s$, 
into $C(G)$, with the topology of the compact  convergence, is
continuous. Indeed, since $\Ss$ is a Fr\'echet space, then it is
barrelled (Corollary of Proposition~2 Chapter III.4.2 of \cite{BTVS}). Hence 
\[ \text{strongly bounded} \Leftrightarrow  \text{weakly bounded}
\Leftrightarrow  \text{equicontinuous}.\]  
(Scolium and Definition~2 Chapter III.4.2 of \cite{BTVS}).
Proposition 5 Chapter III.3.4 of \cite{BTVS} implies that on $B$ the
topology of the simple convergence is equivalent to the topology of
precompact subsets. Hence, for  any compact subset ${\mathcal K}$ of $G$,
since $x\mapsto \tau(x)u$ is continuous, the set $A=\tau({\mathcal K})u$ is
compact in $\Ss$,  hence precompact  and, by the above reasoning 
$B\ni T\mapsto \sup_{v\in  A}\abs{\scal{T}{v}_\Ss}$ is continuous with
respect to the topology of  the simple convergence.
   
The next assumption requires that the reproducing formula holds for any
distribution in $\Ss'$.

\begin{ass}\label{H4}
For all $T\in\Ss'$, $K\, V_eT\in L^1(G)$ and $V_eT\con K=V_eT$.
\end{ass}
Since the representation $\tr{\tau}$ leaves  $\Ss'$ invariant and $V_e$
intertwines $\tr{\tau}$ with $\la$, the requirement ${K\, V_eT}\in
L^1(G)$  implies that $V_eT\con K$ exists. Furthermore, if  $V_e\Ss'$ is
contained in $\T^\#$, then ${K\, V_eT}\in L^1(G)$ holds for all $T\in\Ss'$ since $K\in\T$.  

In the two propositions that follow, we  give sufficient conditions implying Assumption~\ref{H4}. 
\begin{prop}\label{repr-formula}
Assume that $\mathcal M^\T$ is a reflexive space  and ${K\, V_eT}\in
L^1(G)$  for all $T\in\Ss'$. Then  the reproducing formula $V_eT\con
K= V_eT$ holds true for all $T\in\Ss'$.
\end{prop}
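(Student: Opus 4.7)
The plan is to realize $\ell(x)K$ as a Pettis--Gelfand--Dunford integral of an $\mathcal M^\T$-valued map on $G$ whose pairing with $T$ reproduces $(V_eT\con K)(x)$; reflexivity is precisely the hypothesis that allows this integral to live in $\mathcal M^\T$ itself rather than only in its bidual.

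By Theorem~\ref{tuno}, $V_0:\Ss\to\mathcal M^\T$ is a topological isomorphism, so each $T\in\Ss'$ corresponds to a unique $\psi=(\tr{V_0})^{-1}T\in(\mathcal M^\T)'$ with $V_eT(z)=\scal{\psi}{\ell(z)K}_{\mathcal M^\T}$ for every $z\in G$. Fix $x\in G$ and define $\Psi_x:G\to\mathcal M^\T$ by $\Psi_x(y)=\overline{K(y^{-1}x)}\,\ell(y)K$. Using the anti-linearity of $\scal{\cdot}{\cdot}_{\mathcal M^\T}$ in the second entry, for any $\psi'\leftrightarrow T'$ one finds
\[
\scal{\psi'}{\Psi_x(y)}_{\mathcal M^\T}=K(y^{-1}x)\,V_eT'(y),
\]
whose modulus equals $|\ell(x)K(y)|\,|V_eT'(y)|$. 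Applying the standing hypothesis $K\cdot V_e(\cdot)\in L^1(G)$ to the translated distribution $\tr{\tau}(x^{-1})T'\in\Ss'$ and performing the substitution $y\mapsto xy$ shows this function lies in $L^1(G)$, so $\Psi_x$ is scalarly integrable. Reflexivity together with Theorem~\ref{Gelfand-Dunford} then produces $w_x:=\int_G\Psi_x(y)\,dy\in\mathcal M^\T$ satisfying
\[
\scal{\psi'}{w_x}_{\mathcal M^\T}=\int_G K(y^{-1}x)\,V_eT'(y)\,dy=(V_eT'\con K)(x),\qquad\psi'\leftrightarrow T'.
\]

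To identify $w_x$ with $\ell(x)K$ I would exploit density. Theorem~\ref{tuno} asserts that $\tr{i}(\hh)$ is dense in $\Ss'_s$, so $\mathcal D:=\tr{V_0^{-1}}(\tr{i}(\hh))$ is weak-$*$ dense in $(\mathcal M^\T)'$; its elements are exactly the $L^2$-pairing functionals $f\mapsto\scal{V_2v}{f}_2$ for $v\in\hh$, using that $j:\mathcal M^\T\hookrightarrow L^2(G)$ is continuous. For such $\psi'$ the associated $T'=\tr{i}(v)$ has $V_eT'=V_2v$, and Proposition~\ref{tre} gives the classical reproducing identity $V_2v\con K=V_2v$. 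Therefore, on $\mathcal D$,
\[
\scal{\psi'}{w_x}_{\mathcal M^\T}=(V_2v\con K)(x)=V_2v(x)=\scal{V_2v}{\ell(x)K}_2=\scal{\psi'}{\ell(x)K}_{\mathcal M^\T}.
\]
Both $\psi'\mapsto\scal{\psi'}{w_x}_{\mathcal M^\T}$ and $\psi'\mapsto\scal{\psi'}{\ell(x)K}_{\mathcal M^\T}$ are weak-$*$ continuous functionals on $(\mathcal M^\T)'$ (the first because $w_x\in\mathcal M^\T$), so equality on the weak-$*$ dense set $\mathcal D$ forces equality on all of $(\mathcal M^\T)'$, whence $w_x=\ell(x)K$ in $\mathcal M^\T$. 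Evaluating $\psi$ on this identity and unwinding the definitions yields
\[
V_eT(x)=\scal{\psi}{\ell(x)K}_{\mathcal M^\T}=\scal{\psi}{w_x}_{\mathcal M^\T}=(V_eT\con K)(x),
\]
which is the required reproducing formula.

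The main obstacle is securing the Pettis integral \emph{inside} $\mathcal M^\T$: the Gelfand--Dunford construction of Section~\ref{scalar_integration} a priori produces an element in the strong bidual $((\mathcal M^\T)'_\beta)'$, and it is precisely the reflexivity of $\mathcal M^\T$ that identifies this bidual with $\mathcal M^\T$ itself. Without reflexivity one would have $w_x$ only as an abstract functional on $(\mathcal M^\T)'$, and the evaluation on the dense family $\mathcal D$ — which is the bridge to the classical $L^2$-reproducing formula — would no longer make sense as an equality of vectors.
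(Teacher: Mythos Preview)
Your proof is correct and follows essentially the same strategy as the paper's: use reflexivity so that $(\mathcal M^\T)'$ (equivalently $\Ss'$) has the (GDF) property, invoke the Gelfand--Dunford theorem to realize the relevant scalar integral inside $\mathcal M^\T$ (equivalently $\Ss$), and then identify that integral via the weak-$*$ density of $\tr{i}(\hh)$ together with the classical $L^2$ reproducing formula~\eqref{eq:2}. The only cosmetic differences are that the paper works directly in $\Ss$ rather than transporting to $\mathcal M^\T$ via $V_0$, and it performs the integral once at the identity (obtaining $v_u=u$) and then translates by $\tr{\tau}(y^{-1})$, whereas you carry the parameter $x$ throughout and obtain $w_x=\ell(x)K$ for each $x$; these are equivalent computations.
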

\begin{proof}
Since $\Ss$ and $\mathcal M^\T$ are isomorphic (Theorem~\ref{tuno}), then also $\Ss$ is a
reflexive space. Regard $\Ss$ as the dual of $\Ss'$, which has the property (GDF) by
  Proposition~3 Chapter 6. Appendix No.2 of \cite{BINT1}.  The
  assumption implies that the map $x\mapsto \tau(x)u
  \scal{\pi(y)u}{u}_\hh$ is scalarly integrable from $G$ to $\Ss_s$,
  hence Theorem~\ref{Gelfand-Dunford} shows that there exists $v_u\in\Ss$ such that
  \[ \scal{T}{v_u}_\Ss = \int_G \scal{T}{\tau(x)u}_\Ss
  \scal{\pi(x)u}{u}_\hh dx.\] 
By Theorem~\ref{tuno}, $\hh$ is dense in
  $\Ss_s'$ and by \eqref{eq:2} $v_u=u$, which means that
\[ \scal{T}{u}_\Ss = \int_G \scal{T}{\tau(x)u}_\Ss
  \scal{\pi(x)u}{u}_\hh dx.\] 
Given $y\in G$, by applying the above equality to $\tr{\tau(y^{-1})}T$, we get 
    \begin{align*}
      V_eT\,(y) &=\scal{T}{\tau(y)u} _{\mathcal S}= \scal{\tr{\tau(y^{-1})}T}{u} _{\mathcal S}\\
& =\int
      \scal{\tr{\tau(y^{-1})}T}{\tau(x)u}_\Ss
      \scal{\pi(x)u}{u}_\hh\,dx \\
&  = \int
      \scal{T}{\tau(yx)u}_\Ss \scal{\pi(x)u}{u}_\hh\, dx \\
&  = \int
      \scal{T}{\tau(x)u}_\Ss \scal{u}{\pi(x^{-1}y)u}_\hh\, dx,
    \end{align*}
where  the last line is due to the change of variable $x\mapsto
y^{-1}x$ and the fact that $\pi$ is a unitary representation. Hence
the convolution $VT\con K$ exists and is equal to $V_eT$.
\end{proof}

The property
 ${K\, V_eT}\in
L^1(G)$  for all $T\in\Ss'$ means that the map $x\mapsto \tau(x)u
  \scal{\pi(y)u}{u}_\hh$ is scalarly integrable from $G$ to
  $\Ss$, {\em i.e.}, there exists {a linear map} $\omega:\Ss'\to\C$ such that
\[ \omega(T)= \int_G \scal{T}{\tau(x)u}_\Ss
  \scal{\pi(x)u}{u}_\hh dx.\]
Furthermore, since $\hh$ is continuously embedded in $\Ss'$ and $\pi$ is a
reproducing representation, for all ${w}\in \hh$ we have
\[ \omega(\tr{i}(w))=\scal{w}{u}_\hh.\]
By Theorem~\ref{tuno}, the map $\tr{i}$ has a dense image in
$\Ss'_s$. However, $\omega$ is continuous with respect to the {weak-$*$
topology of $\Ss'$ if and} only if
$\omega\in\Ss$. In the setting of reproducing representations, the
requirement that the reproducing formula holds for all distributions is
equivalent to assuming that $\omega\in \Ss$ and, in this case,
$\omega$ is precisely $u$. The hypothesis $\omega\in\Ss$ is
precisely property (R4) in \cite{chol11}. Furthermore,
if $\Ss$ is a Banach space, as in the classical setting, and if the map $x\mapsto \tau(x)u
  \scal{\pi(y)u}{u}_\hh$ is Bochner-integrable, then it is scalarly
  integrable and, clearly, $\omega$ is always in $\Ss$. 

Here is another sufficient condition.  
\begin{prop}\label{suff2}
Assume that 
$\T^\#=\T'$ and suppose that  $\abs{\vu}\con \abs{\vu}$ exists and belongs to
 $\T$. Then  $V_eT\con K=V_eT$ for all $T\in\Ss'$. 
\end{prop}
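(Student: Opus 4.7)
The plan is to reduce the statement to an application of Proposition~\ref{due}. Since $\T^\#=\T'$ by hypothesis, Theorem~\ref{lsei} gives that $\tr{\,V_0}\colon\T^\#\to\Ss'$ is surjective; pick $\Phi\in\T^\#$ with $T=\tr{\,V_0}\Phi$. By~\eqref{eq:22}, $V_eT=\Phi\con\vu$, so the target identity $V_eT\con\vu=V_eT$ becomes the associativity-type relation $(\Phi\con\vu)\con\vu=\Phi\con\vu$.

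I would then apply Proposition~\ref{due} with $\E=\Ss$ and $f=\Phi$. Hypothesis~\eqref{eq:17} reads $\Phi\cdot V_2i(v)\in L^1(G)$ for every $v\in\Ss$; since $V_2i(v)=j(V_0v)\in j(\T)$, this is immediate from $\Phi\in\T^\#$. Therefore $\pi(\Phi)u\in\Ss'$ is well defined and~\eqref{eq:40} yields $V_e\pi(\Phi)u=\Phi\con\vu=V_eT$, so the injectivity of $V_e$ (Theorem~\ref{lsei}) forces $\pi(\Phi)u=T$. The reproducing identity~\eqref{d} of Proposition~\ref{due} then concludes the proof, provided its two remaining hypotheses hold: $\vu\con\vu=\vu$, granted by Proposition~\ref{tre}(b), and the existence of $\Phi\con(|\vu|\con|\vu|)$.

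The crux, and the only genuinely nontrivial step, is verifying this last existence condition, and this is where the hypothesis $|\vu|\con|\vu|\in\T$ will be combined with the symmetry of $\vu$. From $\widecheck{\vu}=\overline{\vu}$ in~\eqref{eq:32} one gets $\widecheck{|\vu|}=|\vu|$, and a short change of variables based on left-invariance of the Haar measure and this pointwise symmetry yields $\widecheck{(|\vu|\con|\vu|)}=|\vu|\con|\vu|$ as well. Consequently, for every $x,y\in G$,
\[(|\vu|\con|\vu|)(y^{-1}x)=\la(x)\widecheck{(|\vu|\con|\vu|)}(y)=\la(x)(|\vu|\con|\vu|)(y)=j\bigl(\ell(x)(|\vu|\con|\vu|)\bigr)(y),\]
which lies in $j(\T)$ because $|\vu|\con|\vu|\in\T$ by assumption and $\T$ is $\ell$-invariant. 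Since $\Phi\in\T^\#$, the integral $\int_G|\Phi(y)|(|\vu|\con|\vu|)(y^{-1}x)\,dy$ is finite for every $x$, so $\Phi\con(|\vu|\con|\vu|)$ exists everywhere and the argument closes.
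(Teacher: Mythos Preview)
Your proof is correct and follows essentially the same route as the paper's. The paper also writes $T=\tr{\,V_0}\Phi$ for some $\Phi\in\T^\#$, verifies via the symmetry $\widecheck{|\vu|\con|\vu|}=|\vu|\con|\vu|$ and the $\la$-invariance of $\T^\#$ that $|\Phi|\con(|\vu|\con|\vu|)$ exists, and then invokes~\eqref{eq:35} directly to get $(\Phi\con\vu)\con\vu=\Phi\con(\vu\con\vu)=\Phi\con\vu$; you package this last step through~\eqref{d} of Proposition~\ref{due}, whose proof is precisely that computation, so the arguments are substantively identical. Your detour establishing $\pi(\Phi)u=T$ via injectivity of $V_e$ is harmless but unnecessary: once $V_eT=\Phi\con\vu$ and the associativity hold, the reproducing identity follows immediately.
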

\begin{proof} 
By Theorem~\ref{tuno}, $\tr{\,V_0}$ is surjective, so that if $\T'=\T^\#$, then for any $T\in\Ss'$   there exists $\Phi\in\T^\#$  such that $\tr{\,V_0}\Phi=T$. Furthermore, if  $\abs{\vu}\con \abs{\vu}$ exists and belongs to
 $\T$, then 
 \begin{align*}
  \int_{G\times G} \abs{\Phi(zx)\scal{\pi(x)u}{\pi (y)u}_\hh \scal{\pi(y)u}{u}_\hh}dx\,dy & = \int_G
    \abs{\Phi(zx)} \left(\int_G \abs{\vu(y)}
      \abs{\vu(y^{-1}x)}dy\right)\,dx  \\
& =   \int_G \abs{\la(z^{-1})\Phi(x)} (\abs{\vu}\con \abs{\vu})(x)\,dx
\end{align*}
and, since $\abs{\la(z^{-1})\Phi}\in\T^\#$, the last integral is finite
for all $z\in G$. By~\eqref{eq:34}  and~\eqref{eq:1},  we have that $\widecheck{ \abs{\vu}\con
  \abs{\vu}}=\abs{\vu}\con \abs{\vu}$, hence Fubini theorem implies
that the convolution $\abs{\Phi}\con
(\abs{\vu}\con \abs{\vu})$ exists, and ~\eqref{eq:35} in the appendix shows
\[ (\Phi\con\vu)\con\vu=\Phi\con(\vu\con\vu).\]
Finally, \eqref{eq:22} and~\eqref{eq:2} give
\[ V_eT\con\vu=(\Phi\con\vu)\con\vu=\Phi\con(\vu\con\vu)=\Phi\con\vu=V_eT. \qedhere \]
\end{proof}

\subsection{Coorbit spaces}

We now fix a Banach space $Y$, with norm $\nor{\cdot}_Y$, continuously
embedded in $\Lz$ and $\la$-invariant.  In order to be consistent with
the current literature, we do not indicate the explicit embedding as
we did for the other spaces. The results in this section hold true under the weaker
  assumption that $Y$ is a Fr\'echet space. However, we do not need
  this generality because the main example that we are interested in 
is the case when $Y$ is a  weighted $L^p$ space for a fixed value of $p$.

The {\it coorbit space} of $Y$ is
\begin{equation}
 \Co{Y}  = \set{T\in \Ss' \mid V_e T\in Y}
\label{eq:21}
\end{equation}
endowed with the norm
\begin{equation}
 \nor{T}_{\Co{Y}}  = \nor{V_eT}_Y. \label{eq:23}
\end{equation}
Since $V_e$ is a linear injective map, $\nor{\cdot}_{\Co{Y}}$
is clearly  a norm. We will prove below that $\Co{Y}$ is in fact a Banach space.

Just as for the target space $\T$, the two basic assumptions for the space $Y$   
may be formulated in terms of K\"othe duals and have to do with the
kernel $K$ (compare Assumption~\ref{H5} below with
Assumption~\ref{H1}) and with the image of the voice transform
$V_2\hh$ (compare Assumption~\ref{H6} below with
Assumption~\ref{H2}). They should also be compared with the
corresponding assumptions made  in  \cite{C2012,chol09,chol11}.
As above, we write
\[
\mathcal M^Y=\set{f\in Y \mid f \con \vu=f}.
\]
\begin{ass}\label{H5} For all $f\in Y$, we have $f \vu \in L^1(G)$, that is, $K\in Y^\#$.
\end{ass}

\begin{ass}\label{H6}
For all $f\in\mathcal M^Y$ and all $v\in\Ss$, we have $f{V_0v}\in L^1(G)$, {\em i.e.},
$V_0\Ss\subset (\mathcal M^Y)^\#$.
\end{ass}
By Proposition~\ref{uno} applied to  $\F=Y$,  Assumption~\ref{H5}
implies that $\mathcal M^Y$ is a $\la$-invariant closed subspace of $Y$. Furthermore, by
Proposition~\ref{due} with $\E=\Ss$, Assumption~\ref{H6} implies that
for all $f\in\mathcal M^Y$ the Fourier transform of $f$ at $u$ exists
in $\Ss'$.

In the following proposition we list the main properties of $\Co{Y}$.
\begin{prop}\label{prop6}
The space $\Co{Y}$ is a Banach space invariant under the action of the
representation $\tr{\tau}$. The extended voice transform is an
isometry from $\Co{Y}$ onto $\mathcal M^Y$ and its inverse is the
Fourier transform at $u$. Therefore
\begin{align*}
&  V_e\Co{Y}= \mathcal M^Y,\\
& \set{\pi(f)u\mid f\in\mathcal M^Y}= \Co{Y},\\
& V_e \pi(f)u = f, \qquad f\in \mathcal M^Y, \\
& \pi(V_eT)u = T, \qquad T\in \Co{Y}.
\end{align*}
\end{prop}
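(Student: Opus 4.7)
The plan is to apply the abstract framework of Proposition~\ref{cuno}(d) with $\E=\Ss$ and $\F=Y$, verify the relevant hypotheses from the standing assumptions, and then upgrade the resulting Fr\'echet-type conclusion to a genuine Banach space statement.

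First I would check that we are in the setting of that proposition. By Theorem~\ref{tuno}, $\Ss$ is a Fr\'echet space continuously embedded in $\hh$, and $u\in\Ss$ thanks to Assumption~\ref{H1}. The Banach space $Y$ is in particular a Fr\'echet space continuously embedded in $L^0(G)$, and Assumption~\ref{H5} gives $K\in Y^\#$, so Proposition~\ref{uno} applied to $\F=Y$ yields that $\mathcal M^Y$ is a closed $\la$-invariant subspace of $Y$. Injectivity of $V_e$ on $\Ss'$ was established in Theorem~\ref{lsei}, and the reproducing formula $V_eT\con K=V_eT$ for every $T\in\Ss'$ is precisely Assumption~\ref{H4}.

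Next comes the subtle point: the integrability hypothesis~\eqref{eq:36} of Proposition~\ref{cuno} is required for all $f\in\F$, while Assumption~\ref{H6} delivers $j(f)V_0v\in L^1(G)$ only for $f\in\mathcal M^Y$ and $v\in\Ss$. Inspecting the proof of item (d), this weaker version is exactly what is needed, because the Fourier transform $\pi(f)u$ enters only through arguments $f$ that are images under $V_e$ of elements of $\Co{Y}$, and by the reproducing formula those images lie in $\mathcal M^Y$. Concretely, for each $f\in\mathcal M^Y$ Proposition~\ref{due} applied to $\E=\Ss$ produces a unique $\pi(f)u\in\Ss'$ with $V_e\pi(f)u=j(f)\con K=j(f)=f$; combining this identity with injectivity of $V_e$ and with the reproducing formula $V_eT\con K=V_eT$ for $T\in\Co{Y}$ (which forces $V_eT\in\mathcal M^Y$) yields the four equalities displayed in the statement and, in particular, the bijection $V_e\colon\Co{Y}\to\mathcal M^Y$ whose inverse is the Fourier transform at~$u$.

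Finally, the Banach space structure and the invariance follow painlessly. By definition $\nor{T}_{\Co{Y}}=\nor{V_eT}_Y$, so $V_e$ is an isometry from $\Co{Y}$ onto $\mathcal M^Y$; since $\mathcal M^Y$ is a norm-closed subspace of the Banach space $Y$, it is itself Banach, and hence so is $\Co{Y}$. The $\tr{\tau}$-invariance of $\Co{Y}$ is immediate from the intertwining relation $V_e\tr{\tau}(x)=\la(x)V_e$ (Theorem~\ref{lsei}) together with the $\la$-invariance of $Y$. The only place where care is genuinely needed is the mismatch between Assumption~\ref{H6} and the full strength of~\eqref{eq:36}; the observation that the reproducing formula confines the relevant voice transforms to $\mathcal M^Y$ is what makes the reduction go through.
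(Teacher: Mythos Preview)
Your argument is correct and follows the paper's line: apply Proposition~\ref{cuno} and Corollary~\ref{topology} with $\E=\Ss$, drawing the injectivity of $V_e$ from Theorem~\ref{lsei} (equivalently Assumption~\ref{H3}) and the reproducing formula from Assumption~\ref{H4}. The one bookkeeping difference is how the gap between hypothesis~\eqref{eq:36} and Assumption~\ref{H6} is closed: you take $\F=Y$ and argue by inspection that item~(d) only ever uses $f\in\mathcal M^Y$, whereas the paper takes $\F=\mathcal M^Y$ from the outset (the printed ``$\mathcal M^T$'' is a typo for $\mathcal M^Y$), so that~\eqref{eq:36} becomes Assumption~\ref{H6} verbatim and no inspection is needed---the observation $\Co{Y}=\Co{\Ss',\mathcal M^Y}$ being immediate from Assumption~\ref{H4}.
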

\begin{proof} 
The proposition is a restatement of Proposition~\ref{cuno} and
Corollary~\ref{topology} with $\E=\Ss$ and $\F=\mathcal
M^T$. The hypothesis~\eqref{eq:36} is Assumption~\ref{H6} and the hypothesis
in item~d) of Proposition~\ref{cuno} is satisfied by Assumption~\ref{H3} and Assumption~\ref{H4}.  
\end{proof}

As in the classical setting, we have the following
  canonical identification.
  \begin{cor}\label{coltwo} 
The Hilbert space $L^2(G)$ satisfies
 Assumptions $\ref{H5}$ and $\ref{H6}$, and 
$\Co{L^2(G)}=\hh$.
  \end{cor}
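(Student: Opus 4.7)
The plan is to verify the two assumptions, which reduce to elementary $L^p$-duality facts, and then to deduce the identification $\Co{L^2(G)}=\hh$ by combining the general coorbit machinery of Proposition~\ref{prop6} with the $L^2$-reproducing results already gathered in Proposition~\ref{tre}.

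First, I would check Assumption~\ref{H5}. Since $L^2(G)^\# = L^2(G)$ and, by hypothesis, $\pi$ is reproducing with admissible $u$, we have $K = V_2u \in L^2(G)$ directly from \eqref{eq:8}. Hence $K\in L^2(G)^\#$, which is exactly Assumption~\ref{H5} for $Y=L^2(G)$.

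Next, I would verify Assumption~\ref{H6}. Here $\mathcal M^Y=\mathcal M^2$ as in Proposition~\ref{tre}. For $f\in\mathcal M^2\subset L^2(G)$ and $v\in\Ss$, Theorem~\ref{tuno} gives $V_0v\in\mathcal M^\T$ and $j(\mathcal M^\T)\subset L^2(G)$, so $V_0v\in L^2(G)$. Cauchy--Schwarz then yields $f\,V_0v\in L^1(G)$.

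For the identification, I would first note that under the embedding $\tr{i}:\hh\to\Ss'$ one has, for any $w\in\hh$ and $x\in G$,
\[
V_e\tr{i}(w)\,(x) = \scal{\tr{i}(w)}{\tau(x)u}_\Ss = \scal{w}{\pi(x)u}_\hh = V_2w\,(x),
\]
so $V_e\circ\tr{i} = V_2$. Since $V_2\hh\subset L^2(G)$, this shows $\tr{i}(\hh)\subset\Co{L^2(G)}$, and moreover $\|\tr{i}(w)\|_{\Co{L^2(G)}}=\|V_2w\|_2=\|w\|_\hh$, i.e., $\tr{i}$ is isometric onto its image. For the reverse inclusion, take $T\in\Co{L^2(G)}$. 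By Proposition~\ref{prop6}, $T=\pi(V_eT)u$ with $V_eT\in\mathcal M^2$. Proposition~\ref{tre} asserts that $\pi(f)u\in\hh$ for every $f\in L^2(G)$ and that $V_2(\pi(f)u)=f$ whenever $f\in\mathcal M^2$. Setting $w=\pi(V_eT)u\in\hh$, we have $\tr{i}(w)=T$ in $\Ss'$ because $V_e\tr{i}(w)=V_2w=V_eT$ and $V_e$ is injective on $\Ss'$ by Theorem~\ref{lsei}. This proves $\Co{L^2(G)}=\tr{i}(\hh)$, which is the claimed identification (and the equality is isometric by the above).

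The only mildly delicate point is to make sure the identification of $\hh$ with a subspace of $\Ss'$ is used consistently: namely, that $V_e\circ\tr{i}=V_2$ and that $V_e$ is injective, so that elements of $\Co{L^2(G)}$ produced via $\pi(V_eT)u\in\hh$ genuinely equal $T$ as distributions in $\Ss'$. Everything else is a routine reduction to the already-established Propositions~\ref{tre} and \ref{prop6}.
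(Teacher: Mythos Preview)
Your proof is correct and follows essentially the same route as the paper: verify the two assumptions via $K\in L^2(G)=L^2(G)^\#$ and Cauchy--Schwarz, then use Proposition~\ref{prop6} together with Proposition~\ref{tre} to get both inclusions. You are simply more explicit than the paper about the identification $V_e\circ\tr{i}=V_2$ and about why the element $\pi(V_eT)u\in\hh$ from Proposition~\ref{tre} coincides, under $\tr{i}$, with the distribution $T$; the paper leaves this identification tacit.
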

  \begin{proof}
Since $\pi$ is a reproducing representation,  Assumptions \ref{H5},
and \ref{H6}  are clearly satisfied, and $\hh\subset \Co{L^2(G)}$. Take now
$T\in \Co{L^2(G)}$. By   Proposition~\ref{prop6}
$T=\pi(V_eT)u$. However, since $V_2T\in
L^2(G)$, by Proposition~\ref{tre} $\pi(V_eT)u\in\hh$.
  \end{proof}
Even though $\T$ is not a Banach space, the space
\[\Co{\T}=\set{ T\in\Ss'\mid V_eT\in \T}\]
is well defined and, under Assumption~\ref{H4}, Corollary~\ref{coltwo}
 and the definition of $\Ss$ imply that, as in the classical
setting, $\Co{\T}=\Ss$.  The above identification suggests to characterize the space
\[\Co{\T'}=\set{ T\in\Ss'\mid V_eT\in \T^\#}\subset\Ss'.\]
The equality $\Co{\T'}=\Ss'$ is equivalent to require that $j(f)V_eT\in
L^1(G)$ for all $f\in\T$ and $T\in\Ss'$, that is, $V_e\Ss'\subset
\T^\#$, which is in general stronger than Assumption~\ref{H4}.

Let us compare our approach with the theory developed by
 J.~Christensen and G.~{\'O}lafsson in 
\cite{C2012,chol09,chol11}.  Assumptions~\ref{H1}$\div$\ref{H6} ensure
that the test space $\Ss$ defined by~\eqref{test} satisfies  the
properties (R1)$\div$(R4), and some of our claims can be directly deduced by
the results contained in \cite{chol11} (for example, compare Theorem~2.3 of
\cite{chol11} with our Proposition~\ref{prop6}). 
In our setting, which is somehow parallel to the classical $L^1$ case, we first
introduce the target space $\T$, which is independent of the reproducing
representation, and then we define the test space $\Ss$ as the set of vectors for
which the voice transform belongs to $\T$. The introduction of the
target space $\T$ makes our construction closer to the classical approach
by H. Feichtinger and K. Gr\"ochenig,   and  Assumptions~\ref{H1}, \ref{H2}, \ref{H3}
and \ref{H4}
involve only the target space $\T$ without any reference to the model
space $Y$. Moreover, our proofs mainly rely on
the theory of weak integrals {\it \`a la Dunford-Pettis}, which
allows us  to state our hypotheses as integrability conditions, rather than a
continuity requirement as in \cite{chol11}.

Assumption \ref{H4} requires that the reproducing formula $V_eT\con
K=V_eT$ holds for all $T\in\Ss'$. However, in the proof of 
Proposition~\ref{prop6}, the reproducing formula is needed only for the
distributions in $\Co{Y}$ (compare with item d) of Proposition~\ref{cuno}).
The following lemma shows some equivalent conditions, weaker
than Assumption~\ref{H4}, under which Proposition~\ref{prop6} remains
true. 

\begin{lem}\label{lnove} Take $\T$ and $Y$ such that Assumptions~$\ref{H1},\ref{H2},\ref{H3}$ and
  Assumptions~$\ref{H5},\ref{H6}$ hold true.  Then the following facts are equivalent: 
\begin{enumerate}[a)]
\item\label{item:1} for all $T\in \Co{Y}$,  $V_eT\in \mathcal M^Y$;
\item\label{item:2} for all $T\in \Co{Y}$,  $V_eT\con \vu$ exists and $V_eT\con
  \vu=V_eT$;
\item\label{item:3} for all $T\in \Co{Y}$, the map $x\mapsto
    \scal{T}{\tau(x)u}_\Ss \scal{\pi(x)u}{u}_\hh =V_eT(x)\overline{K(x)}$ is in $L^1(G)$ and 
\begin{equation}
 \int_G \scal{T}{\tau(x)u}_\Ss \scal{\pi(x)u}{u}_\hh dx =
\scal{T}{u}_\Ss;
\label{REPco}
\end{equation}
\item\label{item:4} for all $T\in \Co{Y}$, the map $x\mapsto V_eT\,(x) \tr{i}(\pi(x)u)\in \Ss_s'$ is scalarly
  integrable and its scalar integral is $T$, that is
  \begin{equation}
T = \int_G \scal{T}{\tau(x)u}_\Ss \tr{i}(\pi(x)u)\,dx.\label{eq:41bis}
\end{equation}
\end{enumerate}
\end{lem}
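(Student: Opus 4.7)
The plan is to prove the four-term equivalence via the two chains a)$\Leftrightarrow$b)$\Leftrightarrow$c) and a)$\Leftrightarrow$d), invoking Lemma~\ref{rsei} and Proposition~\ref{due} to do most of the work. Three background facts underpin everything: $V_e$ is injective on $\Ss'$ (Theorem~\ref{lsei}); $\Co{Y}$ is $\tr{\tau}$-invariant and $V_e$ intertwines $\tr{\tau}$ with $\la$ (Proposition~\ref{prop6}); and by Assumption~\ref{H5} combined with Proposition~\ref{uno} applied to $\F=Y$, the convolution $V_eT\con\vu$ exists pointwise for every $T\in\Co{Y}$.

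The equivalence a)$\Leftrightarrow$b) is then immediate from the definition of $\mathcal M^Y$. For b)$\Leftrightarrow$c), I would argue directly: evaluating $V_eT\con\vu$ at $y=e$ and using $\vu(x^{-1})=\overline{\vu(x)}$ from~\eqref{eq:32} turns b) at the identity into precisely the integral identity in c). Conversely, to obtain b) at an arbitrary $y\in G$, I apply c) to $\tr{\tau}(y^{-1})T\in\Co{Y}$ and invoke the intertwining $V_e\tr{\tau}(y^{-1})=\la(y^{-1})V_e$ to unwind everything into $(V_eT\con\vu)(y)=V_eT(y)$ with the required integrability. (Equivalently, one can quote the equivalence of items \ref{c3}) and \ref{e2}) of Lemma~\ref{rsei}, applied with $\E=\Ss$.)

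For a)$\Rightarrow$d), the key is to show that the Fourier transform of $V_eT$ at $u$ exists in $\Ss'$. Under a), $V_eT\in\mathcal M^Y$, so Assumption~\ref{H6} delivers $V_eT\cdot V_0v\in L^1(G)$ for every $v\in\Ss$; this is exactly the integrability hypothesis~\eqref{eq:17} of Proposition~\ref{due} with $\E=\Ss$ and $f=V_eT$. Hence $\pi(V_eT)u\in\Ss'$ exists, and~\eqref{eq:40} combined with b) gives
\[
V_e\pi(V_eT)u=V_eT\con\vu=V_eT.
\]
Injectivity of $V_e$ then forces $\pi(V_eT)u=T$, which is precisely~\eqref{eq:41bis} once the definition of scalar integral is spelled out. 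For the converse d)$\Rightarrow$b), I would test~\eqref{eq:41bis} against $v=\tau(y)u\in\Ss$: the left-hand side is $V_eT(y)$, while the right-hand side is $\int_G V_eT(x)\scal{\pi(x)u}{\pi(y)u}_\hh\,dx=(V_eT\con\vu)(y)$.

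The main obstacle is bookkeeping rather than any deep analytic difficulty: at each step one must match the pointwise, $L^1$, or weak-integral formulation to the exact hypothesis that supplies the relevant integrability. The subtlest point is that c) posits the reproducing identity only at the identity $e\in G$; promoting it to every $y\in G$ so as to recover b) is where the $\tr{\tau}$-invariance of $\Co{Y}$ plays the decisive role.
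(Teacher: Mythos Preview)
Your proof is correct and follows essentially the same route as the paper: the same pairing of Lemma~\ref{rsei} with Proposition~\ref{due}, the same use of $\tr{\tau}$-invariance of $\Co{Y}$ to promote \ref{item:3}) from the identity to arbitrary $y$, and the same appeal to injectivity of $V_e$ to close the loop a)$\Rightarrow$d). One small bibliographic caveat: you cite Proposition~\ref{prop6} for the $\tr{\tau}$-invariance of $\Co{Y}$, but that proposition's proof invokes Assumption~\ref{H4}, which is precisely what Lemma~\ref{lnove} is designed to avoid; the invariance you need follows directly from Theorem~\ref{lsei} (the intertwining $V_e\tr{\tau}(x)=\la(x)V_e$) together with the standing hypothesis that $Y$ is $\la$-invariant, so no circularity actually occurs.
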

\begin{proof}
By definition of coorbit space, $V_eT\in Y$ whenever $T\in\Co{Y}$. Hence   \ref{item:1})  is equivalent to
\ref{item:2}). Since $\Co{Y}$ is $\tr{\tau}$-invariant, \ref{item:3})
implies that the map  $y\mapsto
    \scal{\tr{\tau(x^{-1})}T}{\tau(y)u}_\Ss \scal{\pi(y)u}{u}_\hh $ is
    integrable  for all $x\in G$ and
    \begin{align*}
      V_eT\,(x) =\scal{T}{\tau(x)u} _{\mathcal S}& =\int
      \scal{\tr{\tau(x^{-1})}T}{\tau(y)u}_\Ss \scal{\pi(y)u}{u}_\hh\,dy \\
&  = \int
      \scal{T}{\tau(y)u}_\Ss \scal{\pi(y)u}{\pi(x)u}_\hh\, dy.
    \end{align*}
Hence c) implies item~\ref{e2}) of Lemma~\ref{rsei}. The converse is 
also true by evaluation at the identity. Therefore c) is equivalent to item~\ref{e2}) 
of Lemma~\ref{rsei}, which provides the equivalence between \ref{item:2}) and
\ref{item:3}) and shows that \ref{item:4}) implies \ref{item:3}). 

Assume now that $V_eT\in \mathcal M^Y$. Proposition~\ref{due} with
$f=V_eT$ gives that $V_eT$ satisfies~\eqref{eq:17}, that $\pi(V_eT)u\in \Ss'$ exists and $V_e\pi(V_eT)u=
V_eT$.  Finally, since $V_e$ is injective by Theorem~\ref{lsei}, we know
from item~\ref{c2})  of Lemma~\ref{rsei} that \ref{item:1}) implies
\ref{item:4}). 
\end{proof}

\subsection{Dependency on the admissible vector}\label{AV}
We now examine the dependence of  space $\Ss$  on  the choice of the admisible vector $u$. 
For this reason, in this section, we write $\Ss_u$ instead of $\Ss$, and accordingly for other choices of admissible vectors.

\begin{prop}\label{dependency}
Suppose that $j(\T)\con j(\widecheck{\T})\subset j(\T)$ and that  for all $g\in \T$ the map 
\begin{equation}
f\mapsto f\con\check g\label{eq:59}
\end{equation}
is continuous from $\T$ into itself, where  $j(f\con\check g)=j(f)\con\check j(g)$.
If ${\tilde u}\in\Ss_u\subset \hh$ is another admissible vector satisfying Assumptions $\ref{H1}$, $\ref{H2}$ and $\ref{H3}$, then the test function spaces $\Ss_{\tilde u}$ and $\Ss_u$ coincide as Fr\'echet spaces. Furthermore, $\widecheck{\Ss_u}= \Ss_u$ for any admissible $u$ and
\[
V_{\tilde u}v=V_{u}v\con\widecheck{\overline{V_u\tilde u}}.
\]
for all $v\in\Ss_{\tilde u}=\Ss_u$.
\end{prop}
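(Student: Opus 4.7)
The proof rests on a single convolution identity linking voice transforms with different admissible vectors, so the plan is to derive that identity first and then exploit it to establish equality of sets, equality of topologies, and the check invariance in turn.

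Since $u$ is admissible, $V_u:\hh\to L^2(G)$ is an isometry. Applying Parseval together with the unitarity of $\pi$ and the substitution $z=y^{-1}x$, I would compute
\[
V_{\tilde u}v(x)=\langle v,\pi(x)\tilde u\rangle_\hh
=\int_G V_uv(y)\,\overline{V_u\tilde u(x^{-1}y)}\,dy
=\bigl(V_uv\con\widecheck{\overline{V_u\tilde u}}\bigr)(x),
\]
valid for every $v,\tilde u\in\hh$. This is the stated formula, and it is the engine of everything else.

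For the set equality, if $v\in\Ss_u$ then $V_uv\in j(\T)$; since $\tilde u\in\Ss_u$ and $\T$ carries the continuous involution, also $\overline{V_u\tilde u}\in j(\T)$, and the hypothesis $j(\T)\con j(\widecheck\T)\subset j(\T)$ places $V_{\tilde u}v$ in $j(\T)$, hence $v\in\Ss_{\tilde u}$. The converse hinges on the key observation that $u$ itself lies in $\Ss_{\tilde u}$: specializing the identity to $v=u$ yields $V_{\tilde u}u=K\con\widecheck{\overline{V_u\tilde u}}\in j(\T)$ by Assumption~\ref{H1} ($K\in j(\T)$) together with the same convolution hypothesis. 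Since $\tilde u$ satisfies Assumptions~\ref{H1}--\ref{H3} by hypothesis, the roles of $u$ and $\tilde u$ become symmetric, and the identical argument with them swapped gives $\Ss_{\tilde u}\subset\Ss_u$. Equality of Fréchet structures then follows from Theorem~\ref{tuno}: both topologies are initial for the voice transforms onto $\mathcal M^\T$, and the assumed continuity of $f\mapsto f\con\check g$ makes the identity a continuous linear transition $V_u(\Ss)\to V_{\tilde u}(\Ss)$ with continuous inverse supplied by the symmetric argument, so the two Fréchet topologies coincide. Finally, the stability $\widecheck{\Ss_u}=\Ss_u$ I would obtain by pulling the check back via $V_u$ and verifying, using the intrinsic kernel symmetry $\overline K=\widecheck K$ of \eqref{eq:32} together with $\widecheck{f\con g}=\widecheck g\con\widecheck f$ and the involution on $\T$, that $\mathcal M^\T$ is stable under check.

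The principal technical obstacle is precisely this last point: taking checks of the reproducing equation $j(f)\con K=j(f)$ produces $\overline K\con\widecheck{j(f)}=\widecheck{j(f)}$, which is a \emph{left}-reproducing relation with $\overline K$ rather than the \emph{right}-reproducing relation with $K$ that defines $\mathcal M^\T$. Converting one into the other requires a careful combined use of the symmetries $\overline K=\widecheck K$ and $\widecheck{f\con g}=\widecheck g\con\widecheck f$, and this is the only step where non-trivial interplay between check, conjugation and convolution on a possibly non-unimodular group plays a substantive role.
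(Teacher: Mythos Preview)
Your argument for the main claims mirrors the paper's: derive $V_{\tilde u}v=V_uv\con\widecheck{\overline{V_u\tilde u}}$ from the isometry of $V_u$, use the hypothesis $j(\T)\con j(\widecheck\T)\subset j(\T)$ to get $\Ss_u\subset\Ss_{\tilde u}$, swap roles, and deduce the topological equality from the assumed continuity of $f\mapsto f\con\check g$. Your explicit verification that $u\in\Ss_{\tilde u}$ (via $V_{\tilde u}u=K\con\widecheck{\overline{V_u\tilde u}}\in j(\T)$), which is needed to make the role-reversal legitimate, is a point the paper leaves tacit.

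Where you diverge is the check-invariance. You read $\widecheck{\Ss_u}=\Ss_u$ as $\widecheck{\mathcal M^\T}=\mathcal M^\T$ and then worry about converting the left-reproducing relation $\overline K\con\widecheck{V_0v}=\widecheck{V_0v}$ into the right-reproducing one that defines $\mathcal M^\T$. The paper does not attempt this. It simply reads that same identity as exhibiting $\widecheck{V_0v}$ as an element of $j(\T)\con j(\widecheck\T)\subset j(\T)$ (since $\overline K\in j(\T)$ by Assumption~\ref{H1} and closure under conjugation, while $V_0v\in j(\T)$ gives $\widecheck{V_0v}\in j(\widecheck\T)$), concluding only that $\widecheck{V_0v}\in j(\T)$; the stated equality then follows because the check is an involution. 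So the obstacle you flag is genuine for the stronger reading you adopt, but the paper's own argument sidesteps it entirely by proving the weaker membership in $j(\T)$ rather than in $\mathcal M^\T$.
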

\begin{proof}
Let $v\in\Ss_u$ and $x\in G$. Since $\pi$ is reproducing and $u$ is admissible,
\begin{align*}
 V_{2,{\tilde u}}v\, (x)  & = \scal{v}{\pi(x){\tilde u}}_\hh \\
& = \int_G \scal{v}{\pi(y)u}_\hh
 \overline{\scal{\pi(x){\tilde u}}{\pi(y)u}_\hh}\, dy \\ 
& = \int_ G \scal{v}{\pi(y)u}_\hh
\overline{\scal{{\tilde u}}{\pi(x^{-1}y)u}_\hh}\, dy\\
&=  V_{2,u}v\con \widecheck{\overline{V_{2,u}{\tilde u}}} \,(x), 
\end{align*}
where  $V_{2,u}v,V_{2,u}\tilde u\in j(\cT)$ since $v,{\tilde u}\in\Ss_u$. The hypothsis on $\T$  implies that   
$V_{2,{\tilde u}}v\in j(\T)$, so that $\Ss_u\subset \Ss_{\tilde u}$. 
 We now prove that the embedding of  $\Ss_u$ into $\Ss_{\tilde u}$ is continuous. Fix a semi-norm  
 $\nor{\cdot}_{i,\Ss_{\tilde u}}$ of
$\Ss_{\tilde u}$, {\em i.e},  fix a semi-norm $\nor{\cdot}_{i,\T}$ of $\T$ such
that $\nor{v}_{i,\Ss_{\tilde u}}=\nor{V_{2,{\tilde u}} v}_{i,\T}$ for all
$v\in\Ss_{\tilde u}$. By~\eqref{eq:59}  with $f=V_2v$ and $g=\overline{V_0{\tilde u}}$, there exist a constant $C>0$ and
a semi-norm $\nor{\cdot }_{j,\T}$ of $\T$ such that 
\[
\nor{v}_{i,\Ss_{\tilde u}}=\nor{V_{2,{\tilde u}} v}_{i,\T} \leq C \nor{V_2 v}_{j,\T} = C  \nor{v}_{j,\Ss_u}
\]
where $ \nor{\cdot}_{j,\Ss_u}$ is a semi-norm of $\Ss_u$. Hence, the embedding is continuous.
Interchanging the roles of $u$ and ${\tilde u}$, we obtain that  $\Ss_{\tilde u}\subset \Ss_u$
with a continuous embedding.  Finally by~\eqref{eq:34}  in the appendix and~\eqref{eq:1},  for all $v\in\Ss_u$,
\[ \widecheck{ V_0v}= \widecheck{ V_0v \con K} = \overline{K}\con
\widecheck{ V_0v}\in\T\]
by assumption,  so that $\widecheck{ \Ss_u}\subset\Ss_u$ and, hence, $\widecheck{ \Ss_u}=\Ss_u$.
\end{proof}
In the classical framework, $\pi$ is irreducible and $\T=L^1(G,w\beta)$,
where $w$ is a continuous density satisfying~\eqref{eq:33} and  \eqref{eq:46}  in the appendix and 
\begin{equation}  
\label{eq:45}
w(x) =   w(x^{-1}) \Delta(x^{-1}). 
\end{equation}
This last condition implies that $\T=\widecheck{\T}$ so that 
the hypotheses of the above proposition are satisfied. However, a
stronger result holds true, namely
\[ \set{ u\in \hh \mid K_u\in \T}=\Ss,\]
which is the content of  Lemma 4.2 in \cite{fegr88}. 
Note that the irreducibility ensures that, if $K_u\in \T$, then $u$ is
an admissible vector.    However, if $\pi$ is not irreducible, the
above equality does not hold as shown by a counter-example  in 
\cite{F2013}.


\section{A model for the target space}\label{intersection}
In this section, we illustrate some examples. They include
band-limited functions (Section~\ref{BAND}), 
Shannon wavelets (Section~\ref{shannon})
and {\it Schr\"odingerlets} (Section~\ref{SCR})
that have inspired our theory.

\subsection{ Intersection of all $L^p_w(G)$ with $1<p<+\infty$}\label{mainexample}
In this section, $w:G\to (0,+\infty)$ will denote a continuous function, to be called {\it weight}, satisfying \begin{subequations}
  \begin{align}
    \label{eqb:33}
    w(xy) & \leq w(x)w(y) \\
w(x) & = w(x^{-1})  \label{eqb:45} 
  \end{align}
for all $x,y\in G$.  As a consequence, it also holds that
  \begin{equation}
     \inf_{x\in G} w(x)  \geq 1  \label{eqb:46}. \\    
  \end{equation}
\end{subequations}
The notion of weight  in \cite{gro91} is based on the  submultiplicative property~\eqref{eqb:33}. The symmetry~\eqref{eqb:45} can always be satisfied by
replacing $w$ with $w+\widecheck{w}$. This requirement is necessary for
our development (see item g) of Theorem~\ref{intersections}
below). Condition~\eqref{eqb:46} is explicitly stated in \cite{gro91}
and, in the classical $L^1(G)$ setting, it is necessary to ensure that
the test space is a Banach space (see Theorem~\ref{Sbanach} below).
In the usual  irreducible $L^1$ setting, it is also assumed that the weight satisfies~\eqref{eq:45}, which is actually incompatible with~\eqref{eqb:45}. However, \eqref{eq:45} is only necessary in order to see that the space of admissible vectors coincides with the test space (see Lemma~4.2 in  \cite{fegr88}). In the non irreducible case, though, this set-theoretic equality is lost anyhow, as mentioned in the introduction \cite{F2013}.

For all $p\in[1,\infty)$ define the separable Banach space
\[
L^p_w(G)=\set{f\in \Lz\mid  \int_G \abs{w(x)f(x)}^p dx<+\infty}
\]
with  norm 
\[
\nor{f}_{p,w}^p= \int_G \abs{w(x)f(x)}^p dx,
\]
and the obvious modifications for $p=\infty$.
Clearly, the map $J_p:L^p_w(G)\to L^p(G)$ defined by  $J_p(f)=wf$  is
a unitary operator. The following characterization of the K\"othe
dual holds true.
\begin{lem}\label{4.1}
 Fix $p\in [1,+\infty)$ and denote by $q=\frac{p}{p-1}\in(1,+\infty]$ the dual exponent.  Then 
\[
L^p_w(G)^\#= L^{q}_{w^{-1}}(G).
\]  
For all $g\in  L^{q}_{w^{-1}}(G)$ and $f\in L^p_w(G)$, set
\[
\scal{g}{f}_{p,w}=\int_G g(x) \overline{f(x)} dx.
\]
Then the map 
$g \mapsto \scal{g}{\cdot}_{p,w}$
is an isomorphism from $L^{q}_{w^{-1}}(G)$ onto $L^p_w(G)'$. Under
this identification, the transpose  $\tr{J_p}: L^q(G) \to L^{q}_{w^{-1}}(G)$
is given by 
\[\tr{J_p} h=w h.\]
\end{lem}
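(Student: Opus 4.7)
The plan is to reduce everything to the standard duality of unweighted $L^p$ spaces via the isometry $J_p$, which does all the real work.

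First I would prove the K\"othe dual identity $L^p_w(G)^\# = L^q_{w^{-1}}(G)$ by a direct H\"older argument in both directions. For the inclusion $L^q_{w^{-1}}(G) \subset L^p_w(G)^\#$, if $g \in L^q_{w^{-1}}(G)$ and $f \in L^p_w(G)$, I would factor $gf = (w^{-1}g)(wf)$ with $w^{-1}g \in L^q(G)$ and $wf \in L^p(G)$, and apply H\"older to get $gf \in L^1(G)$ together with the estimate $\int |gf| \leq \|g\|_{q,w^{-1}} \|f\|_{p,w}$. For the reverse inclusion, take $g \in L^p_w(G)^\#$, set $h = w^{-1}g$, and observe that an arbitrary $\tilde f \in L^p(G)$ can be written as $\tilde f = wf$ with $f = w^{-1}\tilde f \in L^p_w(G)$; then $h\tilde f = gf \in L^1(G)$. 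Hence $h$ lies in the K\"othe dual of $L^p(G)$, which is $L^q(G)$ (using $\sigma$-finiteness of $\beta$, which holds since $G$ is locally compact second countable).

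Next I would show that $g \mapsto \langle g,\cdot\rangle_{p,w}$ is an isomorphism onto $L^p_w(G)'$. The H\"older estimate above already proves that the pairing defines a continuous antilinear form on $L^p_w(G)$, i.e.\ an element of $L^p_w(G)' \simeq L^p_w(G)^\wedge$, with operator norm at most $\|g\|_{q,w^{-1}}$. For injectivity, if $\langle g,f\rangle_{p,w} = 0$ for all $f \in L^p_w(G)$, testing against $f = w^{-1}\bar h \mathbf{1}_A$ for suitable compactly supported bounded $h$ forces $g = 0$ a.e. For surjectivity, given $T \in L^p_w(G)'$, the functional $T \circ J_p^{-1}$ lies in $L^p(G)'$, so by the classical $L^p$ duality (valid for $p \in [1,\infty)$ because $\beta$ is $\sigma$-finite) there exists $\tilde g \in L^q(G)$ such that $(T \circ J_p^{-1})(\tilde f) = \int \tilde g\,\overline{\tilde f}\,dx$. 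Setting $g = w\tilde g$, the identity $w^{-1}g = \tilde g \in L^q(G)$ shows $g \in L^q_{w^{-1}}(G)$, and for every $f \in L^p_w(G)$
\[
T(f) = T(J_p^{-1}(wf)) = \int \tilde g\,\overline{wf}\,dx = \int g\,\overline{f}\,dx = \langle g,f\rangle_{p,w},
\]
since $w$ is real-valued. Bicontinuity then follows either from the norm equality $\|T\| = \|g\|_{q,w^{-1}}$ obtained through $J_p$, or from the open mapping theorem.

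Finally, I would compute $\tr{J_p}$ directly from its defining relation $\langle \tr{J_p}h,f\rangle_{p,w} = \langle h, J_p f\rangle_p$ for $h \in L^q(G)$ and $f \in L^p_w(G)$. The right-hand side equals $\int h\,\overline{wf}\,dx = \int (wh)\overline{f}\,dx = \langle wh,f\rangle_{p,w}$, so the identification of step~2 yields $\tr{J_p}h = wh$, which is consistent since $\|wh\|_{q,w^{-1}} = \|h\|_q$. No step is particularly hard; the only point requiring care is keeping the linear/antilinear bookkeeping straight to match the convention $\scal{g}{f}_\F = \int g\bar f\,dx$ fixed in Section~2.3, and handling $p=1$ (hence $q=\infty$) without appealing to reflexivity.
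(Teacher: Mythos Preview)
Your proposal is correct and follows essentially the same route as the paper: both reduce to the classical $L^p$--$L^q$ duality via the isometry $J_p\colon f\mapsto wf$. The only difference is organizational: the paper first identifies $L^q_{w^{-1}}(G)$ isometrically with a closed subspace of $L^p_w(G)'$ and then obtains surjectivity from the unitarity of $\tr{J_p}$, whereas you verify surjectivity explicitly by precomposing a given functional with $J_p^{-1}$; the content is the same.
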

 \begin{proof}
For $g\in\Lz$ we have
$g\in L^p_w(G)^\#$ if and only if $gf\in L^1(G)$ for every  $f\in L^p_w(G)$, which is equivalent to 
$(w^{-1}g)(wf)\in L^1(G)$ for every $wf\in L^p(G)$. This, in turn,  happens if and only if
$w^{-1}g\in L^{q}(G)$, which means that
$ g\in L^{q}_{w^{-1}}(G)$.
Hence $ L^{q}_{w^{-1}}(G)=L^p_w(G)^\#\subset L_w^p(G)'$,
 the pairing
$\scal{\cdot}{\cdot}_{p,w}$ is the pairing between
$L^p_w(G)^\#$ and $L^p_w(G)$ given in  Lemma~\ref{ltre}, and
\begin{align*}
  \nor{g}_{L_w^p(G)'}=\sup_{ \nor{f}_{p,w}\leq 1} \abs{\scal{g}{f}_{p,w}} =
  \sup_{ \nor{wf}_{p}\leq 1} \abs{\scal{w^{-1}g}{wf}_{p}} =
  \nor{w^{-1}g}_{q}= \nor{g}_{q,w^{-1}}.
\end{align*}
Thus  the map $g \mapsto \scal{g}{\cdot}_{p,w}$ is an isometry from 
$L^{q}_{w^{-1}}(G)$ into $L^p_w(G)'$ and it allows to identify
$L^{q}_{w^{-1}}(G)$ with a closed subspace of $L^p_w(G)'$.

We now compute the transpose of $J_p$, taking into account that
$L^p(G)'=L^q(G)$. For a fixed $h\in L^q(G)$ and all $f\in L^p_w(G)$ we have
\[
 \scal{\tr{J_p}h}{f}_{p,w}= \int_G h(x)\overline{w(x)f(x)} dx,
\]
so that, since $w$ is positive, $\tr{J_p}h= wh\in L^p_w(G)^\#= L^{q}_{w^{-1}}(G)$. 
Since $J_p$ is unitary,  so is $\tr{J_p}$ and 
$L_w^p(G)'=L^{q}_{w^{-1}}(G)$. 
  \end{proof}

\begin{lem}\label{leftreg}
    For all $p\in [1,+\infty)$, the left regular representation leaves 
    $L^p_w(G)$ invariant.   The restriction $\ell$ of $\la$ to
    $L^p_w(G)$ is a continuous representation with $\nor{\ell(x)}\leq w(x)$ for all $x\in G$.
\end{lem}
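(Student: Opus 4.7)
The plan is to split this into three steps: prove invariance with the operator-norm bound, establish density of $C_c(G)$ in $L^p_w(G)$, and then deduce strong continuity by a standard three-$\epsilon$ argument.

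First, for $f\in L^p_w(G)$ and $x\in G$, I would compute $\nor{\la(x)f}_{p,w}^p$ directly. Using left-invariance of the Haar measure via the change of variables $y\mapsto xy$,
\[
\nor{\la(x)f}_{p,w}^p=\int_G w(y)^p\abs{f(x^{-1}y)}^p\,dy=\int_G w(xy)^p\abs{f(y)}^p\,dy,
\]
and the submultiplicativity \eqref{eqb:33} yields the bound $w(xy)^p\leq w(x)^p w(y)^p$, hence $\nor{\la(x)f}_{p,w}\leq w(x)\nor{f}_{p,w}$. This simultaneously establishes that $L^p_w(G)$ is $\la$-invariant and gives the desired operator-norm inequality $\nor{\ell(x)}\leq w(x)$.

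Next, I would verify that $C_c(G)$ is dense in $L^p_w(G)$. The unitary $J_p:L^p_w(G)\to L^p(G)$, $f\mapsto wf$, of Lemma~\ref{4.1} transports density questions to $L^p(G)$. Since both $w$ and $w^{-1}$ are continuous, multiplication by $w$ is a bijection of $C_c(G)$ onto itself, so density of $C_c(G)$ in $L^p(G)$ transfers to density of $C_c(G)$ in $L^p_w(G)$.

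The remaining task is strong continuity: $x\mapsto\ell(x)f$ continuous from $G$ to $L^p_w(G)$ for each $f$. Fix $f\in L^p_w(G)$ and $x_0\in G$, pick a compact neighborhood $V$ of $x_0$, and set $M=\sup_{x\in V}w(x)<+\infty$ by continuity of $w$. Given $\eps>0$, choose $g\in C_c(G)$ with $\nor{f-g}_{p,w}<\eps$ and split
\[
\nor{\ell(x)f-\ell(x_0)f}_{p,w}\leq \nor{\ell(x)(f-g)}_{p,w}+\nor{\ell(x)g-\ell(x_0)g}_{p,w}+\nor{\ell(x_0)(g-f)}_{p,w}.
\]
The outer terms are bounded by $M\eps$ each thanks to the operator-norm estimate from the first step. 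For the middle term, the support of $\ell(x)g-\ell(x_0)g$ remains inside the fixed compact set $V\,\supp{g}$ as $x$ ranges over $V$; on that compact set $w$ is bounded by some constant $M'$, so the middle term is dominated by $M'\nor{\la(x)g-\la(x_0)g}_p$, which tends to zero by the classical strong continuity of the left regular representation on $L^p(G)$.

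The main obstacle is not conceptual but bookkeeping: one must keep track of the local boundedness of $w$ to propagate continuity from $C_c(G)$ to the whole space, and it is precisely the symmetry and continuity of $w$ that make the compact-support/localization argument work uniformly on $V$. No deeper ingredient is needed beyond submultiplicativity and the well-known continuity of $\la$ on unweighted $L^p(G)$.
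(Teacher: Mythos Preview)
Your proof is correct and follows essentially the same approach as the paper: the norm bound via submultiplicativity is identical, and the paper's appeal to the abstract criterion in Section~\ref{Reps} (equicontinuity of $\ell$ on compact sets together with continuity of $x\mapsto\ell(x)f$ for $f$ in the dense set $C_c(G)$) is nothing but an encapsulated version of the three-$\eps$ argument you write out explicitly. One small remark: in your closing paragraph you mention that the \emph{symmetry} of $w$ is needed for the localization argument, but in fact only continuity and submultiplicativity of $w$ are used here; condition~\eqref{eqb:45} plays no role in this lemma.
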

\begin{proof}
Fix $x\in G$.  By \eqref{eqb:33}, for all $f\in L^p_w(G)$
  \[ \int_G \abs{w(y)f(x^{-1}y)}^p dy =\int_G \abs{w(xy)f(y)}^p dy
  \leq w(x)^p \int_G \abs{w(y)f(y)}^p dy,\]
so that $\la(x)$ leaves  $L^p_w(G)$ invariant  and the norm of  the restriction
$\ell(x)$  is bounded by $w(x)$.  We now prove that $\ell$ is
continuous by applying the concluding remark of Section~\ref{Reps} in the appendix. For any compact subset ${\mathcal K}$ of $G$, 
since $w$ is continuous,  $w({\mathcal K})$ is bounded  and, hence, $\ell({\mathcal K})$ is
equicontinuous. Furthermore, if $f\in C_c(G)$, the map $x \mapsto \ell(x)f$ is clearly continuous from
$G$ into $L^q_w(G)$ by the dominated convergence theorem. The proof is completed by observing that  $C_c(G)$ is a dense subset of $L^p_w(G)$.
\end{proof}

Let $I=(1,+\infty)$. We define the target space as the set
\[ \T_w =\bigcap_{p\in I} L^p_w(G) \]
with the initial topology, which makes each inclusion $i_p:\T_w\hookrightarrow
L^p_w(G) $ continuous, and endow
\[ \U_w = {\operatorname{span}}\bigcup_{q\in I} L^q_{w^{-1}}(G)\]
with the final topology, which makes each inclusion
$\tilde{\iota}_q:L^q_{w^{-1}}(G)\hookrightarrow\U_w$ continuous. 

Recall that by definition of initial and of final topology, for any  topological space
  $X$, a map $A:X\to\T_w$  is continuous if for all $p\in I$ there exists a
  continuous map $A_p: X\to L^p_w(G)$ such that $i_p A = A_p$, and a map $B:\U_w\to X$ is
  continuous if all $q\in I$ there exists a continuous map
  $B_q: L^q_{w^{-1}}(G)\to X$ such that $B\tilde{\iota}_q=B_q$.
  
As for notation,  given the nature of $\T_w$, the inclusion $j:\T_w\to L^0(G)$ is   set-theoretically tautological  because  the elements of $\T_w$ are (equivalence classes of) measurable functions. However,
we keep it to emphasize that the two spaces, $\T_w$ and $L^0(G)$,  have different topologies.

The following theorem states the main properties of $\T_w$ and $\U_w$. 
\begin{thm}\label{TU}
The space $\T_w$ is a reflexive Fr\'echet space, whose topology is given
by the  fundamental family of semi-norms $\set{ \nor{\cdot}_{p,w}}_{p\in I}$. 
It is closed under complex conjugation and $f\mapsto \overline{f}$ is continuous. 
The canonical inclusion $j:\T_w\to \Lz$ is
continuous, the left regular representation $\lambda$ leaves 
$\T_w$ invariant and the restriction $\ell$  of $\la$ to $\T_w$
is a continuous representation of $G$ on $\T_w$.

The space $\U_w$ is a complete reflexive  locally
convex topological vector space.  For each $g\in \U_w$, the
anti-linear map  from $\T_w$ into $\C$ given by
\[ f\mapsto \int_G g(x)\overline{f(x)}\,dx=\scal{g}{f}_{\T_w} \]
is continuous and $g\mapsto \scal{g}{\cdot}_{\T_w} $ identifies, as
topological vector spaces,   the  dual of  $\T_w$  with
$\U_w$. Furthermore the K\"othe dual of $\T_w$ is $\U_w$, so that
\begin{equation}
  \label{eqb:2}
  \T_w'=\T_w^\#=\U_w.
\end{equation}
For each $f\in \T_w$, the anti-linear map from $\U_w$ to $\C$
\[ g\mapsto \int_G f(x)\overline{g(x)}\,dx=\scal{f}{g}_{\U_w} \]
is continuous and  $f\mapsto \scal{f}{\cdot}_{\U_w}$ identifies, as
topological vector spaces,   the  dual of  $\U_w$  with~$\T_w$. 
\end{thm}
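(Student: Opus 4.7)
The plan is first to establish that $\T_w$ is a Fr\'echet space, addressing the fact that $I=(1,+\infty)$ is uncountable. The key technical input is Lyapunov's interpolation inequality
\[
\|f\|_{r,w}\leq \|f\|_{p,w}^\theta\,\|f\|_{q,w}^{1-\theta},\qquad 1/r=\theta/p+(1-\theta)/q,
\]
which follows from H\"older applied to $|wf|^r=|wf|^{\theta r}|wf|^{(1-\theta)r}$. I pick sequences $p_n\searrow 1$ and $q_n\nearrow\infty$ in $I$ and set $\rho_n=\|\cdot\|_{p_n,w}+\|\cdot\|_{q_n,w}$; these seminorms are nondecreasing in $n$ and, by Lyapunov, every $\|\cdot\|_{p,w}$ with $p\in I$ is dominated by some $\rho_n$, so $\{\rho_n\}_n$ is a countable fundamental system of seminorms. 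Completeness is the usual projective-limit argument: a Cauchy sequence in $\T_w$ is Cauchy in each Banach space $L^p_w(G)$, and the continuous embeddings $L^p_w(G)\hookrightarrow L^0(G)$ force all the $L^p_w$-limits to coincide with a single $f\in L^0(G)$ which thus lies in $\bigcap_p L^p_w(G)=\T_w$.

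The assertions that $f\mapsto\overline{f}$ is a continuous involution, that $j:\T_w\to L^0(G)$ is continuous, and that $\ell$ is a continuous representation, follow directly from the corresponding facts on each $L^p_w(G)$: conjugation preserves every seminorm; $j$ factors through any continuous embedding $L^p_w(G)\hookrightarrow L^0(G)$; Lemma~\ref{leftreg} combined with the universal property of the initial topology yields strong continuity of $\ell$ on $\T_w$. Reflexivity of $\T_w$ is then obtained by realizing it as a closed subspace of the countable product of the Banach spaces $L^{p_n}_w(G)$ and $L^{q_n}_w(G)$, via the natural diagonal embedding whose image is $\{(g_n)\mid g_n=g_m \text{ in }L^0(G)\}$: each factor is reflexive, via the isometry $J_p$ onto the reflexive Banach space $L^{p}(G)$ for $p\in(1,\infty)$; countable products of reflexive Fr\'echet spaces are reflexive; and closed subspaces of reflexive Fr\'echet spaces inherit reflexivity (since they remain barrelled and semi-reflexive).

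For the duality, H\"older combined with Lemma~\ref{4.1} shows that every $g\in L^q_{w^{-1}}(G)\subset\U_w$ defines a continuous anti-linear form $\langle g,\cdot\rangle_{\T_w}$ on $\T_w$. Conversely, any $\phi\in\T_w'$ satisfies $|\phi(f)|\leq C\rho_n(f)$ for some $n$, hence extends by Hahn--Banach to the Banach space $E_n=L^{p_n}_w\cap L^{q_n}_w$ (normed by $\rho_n$); the standard compatible-couple duality identifies $E_n'$ with $L^{p_n'}_{w^{-1}}+L^{q_n'}_{w^{-1}}\subset\U_w$ (where $p_n',q_n'$ are the H\"older conjugates), producing a representative $g\in\U_w$. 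This gives $\T_w'=\T_w^\#=\U_w$ at the level of vector spaces. Symmetrically, any continuous functional on $\U_w$ restricts continuously to each $L^q_{w^{-1}}(G)$, so Lemma~\ref{4.1} yields representatives $f_q\in L^{q'}_w$ which must agree almost everywhere (by testing on $C_c(G)$, which is common to and dense in every $L^q_{w^{-1}}(G)$), producing $f\in\bigcap_q L^{q'}_w=\T_w$. Completeness of $\U_w$ then follows from its identification with the strong dual of the Fr\'echet space $\T_w$, and the two-sided identification of duals delivers reflexivity of both spaces.

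The main obstacle I anticipate is not any of the algebraic identifications, but the fact that the topology on $\U_w$ is defined a priori as the final topology from the inclusions $L^q_{w^{-1}}\hookrightarrow\U_w$, while the duality identifies $\U_w$ set-theoretically with $\T_w'$ carrying its strong topology. Verifying that these two locally convex topologies coincide requires showing the inductive limit is regular, i.e.~that bounded subsets of $\U_w$ are absorbed into a fixed $L^{p_n'}_{w^{-1}}+L^{q_n'}_{w^{-1}}$; this is the most delicate ingredient and is typically handled via bipolar and Banach--Steinhaus arguments exploiting the reflexivity of $\T_w$.
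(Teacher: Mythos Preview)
Your approach is essentially correct in outline, but it takes a markedly different and more laborious route than the paper. The paper's proof is almost a one-liner by reduction: it introduces the multiplication operator $J:\T_w\to\T$, $Jf=wf$ (where $\T=\T_1$ is the unweighted space), checks via $i_pJ=J_pi_p$ that $J$ is a topological isomorphism, and then simply invokes Theorem~\ref{classic} (the Davis--Murray--Weber result for $w=1$) to inherit the Fr\'echet and reflexivity properties. Likewise $\tilde J:\U\to\U_w$, $\tilde Jh=wh$, is shown to be an isomorphism via $\tilde J\,\tilde\iota_q=\tilde\iota_q\,{}^tJ_{q/(q-1)}$, and the duality $\T_w'=\U_w$ is read off from $\T'=\U$ by transposing $J$. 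In particular, the delicate point you single out---that the final topology on $\U_w$ agrees with the strong topology of $\T_w'$---is not re-proved at all: it is transported from the unweighted case through the isomorphisms $J$ and $\tilde J$.

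What your approach buys is self-containment: you do not need to cite \cite{damuwe70}, and your Lyapunov/product-of-reflexives argument for $\T_w$ is a clean direct proof. The cost is that you must then carry out, from scratch, exactly the regularity argument for the inductive limit that you flag as ``the most delicate ingredient'' and leave only sketched. That step is genuinely nontrivial (it is the substance of the cited paper), so as written your proposal has a gap precisely where you acknowledge one. If you want to keep your direct approach, you should either supply that argument in full or, more economically, observe that multiplication by $w$ intertwines your entire setup with the unweighted one and then appeal to Theorem~\ref{classic}, as the paper does.
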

\begin{proof}
The proof is based on the content of the article \cite{damuwe70}, whose main
results are summarized by Theorem~\ref{classic} in the appendix, 
where  $\T=\T_1$ and  $\U=\U_1$ (in  \cite{damuwe70} it is assumed that $w=1$).

By definition of initial topology, $\T_w$ is a locally convex
topological space and $\set{ \nor{\cdot}_{p,w}}_{p\in I}$ is a fundamental family of
semi-norms.  

Clearly, $\T_w$ is closed under complex conjugation and is
left invariant by  $\la$.  We show that $\ell$ is a continuos
representation. 
Given $x\in G$,  $\ell(x):\T_w\to\T_w$ is continuous
because $i_p\ell(x)=\lambda(x)i_p$.
Given $f\in \T_w$, the map $x\mapsto
\ell(x)f$ is continuous from $G$ to $\T_w$ since such are the maps $x\mapsto
i_p\ell(x)f=\ell(x)i_pf$ for all $p\in I$.  The proof that  complex conjugation is
continuous is similar.  

Define the linear map $J:\T_w \to \T$, $Jf=wf$. Since $w>0$, $J$ is a
bijection whose inverse is given by $J^{-1}g=w^{-1}g$. Both maps are
continuous by definition of initial topology since
\[ i_pJ=J_pi_p \qquad  i_p J^{-1}= J_p^{-1}i_p,\]
for all $p$.
Hence $J$ is a topological isomorphism.  By Theorem~\ref{classic}, we infer that   $\T$ is a reflexive
Fr\'echet space and, hence, $\T_w$ is a reflexive
Fr\'echet space, too.

Define $\tilde{J}:\U \to \U_w$, $\tilde{J} h=   wh$, which is clearly
bijective and whose inverse is $\tilde{J}^{-1}g=w^{-1}g$. By definition of final topology,
both are  continuos  since for all
$q\in I$
\[
\tilde{J}\,\tilde{\iota}_q=\tilde{\iota}_q \tr{J_{\frac{q}{q-1}}}\qquad \tilde{J}^{-1}\tilde{\iota}_q=\tilde{\iota}_q \,J_q^{-1}
\]
(with slight abuse, here $\tilde{\iota}_q$ denotes the inclusion of
$L^q(G)$ into $\U$). Hence
$\tilde{J}$ is an isomorphism from $\U$ onto $\U_w$. 
Therefore,  by Theorem \ref{classic},  $\U_w$ is a complete barelled locally convex topological vector space
since such is $\U$. 

Since $J$ is an isomorphism between two Fr\'echet spaces,  by
Corollary~5 of Chapter IV.4.2 of~\cite{BTVS}, $\tr{J}$ is an isomorphism from $\U$ onto $\T_w'$ explicitly given by   
\[
\scal{\tr{J}h}{f}_{\T_w}=\sum_{i=1}^n c_i \int _G h_i(x) w(x) \overline{f(x)} dx
= \int_G  (\tilde{J}h)(x)  \overline{f(x)} dx,
\]
where $h= \sum c_i h_i$ with $c_1,\ldots,c_n\in\C$ and $h_1\in
L^{q_1}(G)$, \ldots,  $h_n\in
L^{q_n}(G)$.  Hence, we can identify $\T_w'$ and $\U_w$ as 
topological vector spaces by means of  the map $\tilde{J}\tr{J}^{-1}$,
and the pairing between $\U_w$ and $\T_w$ is
\[ \scal{g}{f}_{\T_w}= \int_G g(x)\overline{f(x)}dx.\qedhere \]
\end{proof}

Observe that~\eqref{eqb:46} implies $w(x)^{-1}\leq w(x)$ for all $x\in
G$, so that $\T_w\subset \U_w=\T_w'$.  Furthermore, \eqref{eqb:45} ensures that $\check{f}\in \T_w$ if and only
if $\widecheck{wf}\in \T$.\\

We are now ready to state the main result of this section. 

\begin{thm}\label{intersections}  Take a reproducing representation $\pi$ of $G$ acting on
  the Hilbert space $\hh$ and a weight $w$ satisfying \eqref{eqb:33}, \eqref{eqb:45}   and~\eqref{eqb:46}. 
Choose an admissible vector $u\in\hh$ such that
\begin{equation}
K(\cdot)=\scal{u}{\pi(\cdot)u}_\hh \in L^p_w(G)\text{ for all } p\in I\label{eqb:1}
\end{equation}
and set
\begin{align*}
&\Ss_w =\set{v\in\hh\mid \scal{v}{\pi(\cdot)u}_\hh \in L^p_w(G)\text{ for all } p\in I},\\
&\nor{v}_{p,\Ss_w}= \left(\int_G \abs{\scal{v}{\pi(x)u}_\hh}^pw(x)^p dx\right)^{\frac{1}{p}}. 
 \end{align*}
 Then:
  \begin{enumerate}[a)]
   \item the space $\Ss_w$ is a reflexive Fr\'echet space with respect
     to the topology induced by the family of semi-norms
     $\set{\nor{v}_{p,\Ss_w}}_{p\in I}$, the canonical inclusion
     $i:\Ss_w\to\hh$ is continuous and  with dense range;
\item the representation $\pi$ leaves  $\Ss_w$ invariant,  its restriction $\tau$ is a continuous
    representation acting on $\Ss_w$, and
\[ i(\tau(x)v)=\pi(x)i(v) \qquad x\in G,\,v\in\Ss_w;\]
\item if  $\hh$ and $\Ss'_w$ are endowed with the weak topology, the transpose $\tr{i}:\hh\to \Ss_w'$ is continuous, injective, 
  with dense range and satisfies the intertwining
\[ \tr{\tau(x)}\tr{i}(v)=\tr{i}(\pi(x)v) \qquad x\in G,\,v\in \hh;\]
\item the restricted voice transform $V_0:\Ss_w\to \T_w$, given by
\[
V_0v(x)=\scal{i(v)}{\pi(x)u}_\hh\qquad x\in G,\, v\in\Ss_w,
\] 
is an injective  strict morphism onto the closed subspace
\[ \mathcal M^{\T_w}=\set{f\in \T_w  \mid j(f)\con K= j(f)},\]
and it  intertwines $\tau$ and $\ell$; 
\item\label{bar} every $f\in \T_w$, has at $u$ a Fourier transform  in $\Ss_w$  and
\[ 
j(V_0\pi(f)u)= j(f)\con K;
\]
furthermore, the map
\[ \T_w \ni f\mapsto \pi(f)u\in \Ss_w \]
is continuous and its restriction to $\mathcal M^{\T_w}$
is the inverse of $V_0$;
\item\label{sopra} every $\Phi\in\U_w$ has  at $u$ a Fourier
  transform in $\Ss_w'$  and
\[
 V_e \pi(\Phi)u= \Phi\con K;
 \]
\item the extended voice transform given by \eqref{EVT2} takes values in  $\U_w$,   it is injective,   continuous
  (when both spaces are endowed with the strong topology) and
  intertwines  $\tr{\tau}$ and $\la$; the range
  of $V_e$ is the closed subspace 
\begin{equation}
\mathcal M^{\U_w}=\set{\Phi\in \U_w \mid \Phi\con K= \Phi}
  =  \operatorname{span}\bigcup_{p\in I}\mathcal
  M^{L^p_w(G)} \subset L^\infty_{w^{-1}}(G)
  \label{eq:38}\end{equation}
and for all $T\in\Ss_w'$ and $v\in\Ss_w$
\begin{equation}
\scal{T}{v}_{\Ss_w}= \scal{V_eT}{V_0v}_{\T_w};\label{eqb:52}
\end{equation}
\item\label{sotto} the map 
\[  
\mathcal M^{\U_w}\ni \Phi\mapsto \pi(\Phi)u\in \Ss_w'
\]
is the inverse of $V_e$   and coincides with the restriction of the map
  $\tr{\,V_0}$ to $\mathcal M^{\U_w}$, namely 
  \begin{equation}
    \label{eq:39}
   V_e( \tr{\,V_0}\Phi)= V_e\pi(\Phi)u= \Phi\qquad \Phi\in \mathcal M^{\U_w}.
  \end{equation}
\item  $\displaystyle{\tr{i}i(\Ss_w)=\set{T\in\Ss_w'\mid V_eT\in
      \T_w}=\set{\pi(f)u \mid f\in \mathcal M^{\T_w}}}$.
  \end{enumerate}
\end{thm}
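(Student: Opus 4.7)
The strategy is to establish the two equalities of sets cyclically, showing
\[
\tr{i}\,i(\Ss_w) \subset \set{T\in\Ss_w' \mid V_eT\in\T_w} \subset \set{\pi(f)u \mid f\in\mathcal M^{\T_w}} \subset \tr{i}\,i(\Ss_w).
\]
All three inclusions will be quick consequences of items (b), (d), (e), and (h) already proved, once the connecting identity $V_e\circ\tr{i}\circ i = j\circ V_0$ is in hand.

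For the first inclusion, take $v\in\Ss_w$. Since $K\in L^p_w(G)$ for every $p\in I$ by assumption~\eqref{eqb:1}, we have $u\in\Ss_w$, and the intertwining $i\circ\tau=\pi\circ i$ from item~(b) gives, for every $x\in G$,
\[
V_e(\tr{i}\,i(v))(x) = \scal{\tr{i}(i(v))}{\tau(x)u}_{\Ss_w} = \scal{i(v)}{i(\tau(x)u)}_\hh = \scal{i(v)}{\pi(x)u}_\hh = (V_0v)(x).
\]
Thus $V_e(\tr{i}\,i(v))=j(V_0v)\in j(\T_w)$ because $V_0v\in\T_w$ by item~(d).

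For the second inclusion, take $T\in\Ss_w'$ with $V_eT\in\T_w$. By item~(h) we already know that $V_eT\in\mathcal M^{\U_w}$, so $V_eT\con K=V_eT$; combined with $V_eT\in\T_w$ this places $V_eT$ in $\mathcal M^{\T_w}$. Applying the inverse of $V_e$ furnished by item~(h), namely $\Phi\mapsto\pi(\Phi)u$, we recover $T=\pi(V_eT)u$, exhibiting $T$ in the stated form with $f=V_eT\in\mathcal M^{\T_w}$.

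For the third inclusion, given $f\in\mathcal M^{\T_w}$ item~(e) produces an element $v_f\in\Ss_w$ (the Fourier transform of $f$ at $u$ in the sense of~(e)) with $V_0v_f=f$, while item~(h) interprets the same symbol $\pi(f)u$ as an element $T_f\in\Ss_w'$ satisfying $V_eT_f=f$ (viewed in $\U_w$). The apparent ambiguity is the only subtle point of the argument, and it is resolved by Step~1: one computes $V_e(\tr{i}(i(v_f)))=j(V_0v_f)=j(f)=V_eT_f$, and since $V_e$ is injective on $\Ss_w'$ by item~(h), it follows that $T_f=\tr{i}(i(v_f))\in\tr{i}\,i(\Ss_w)$. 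The main (and really the only) obstacle is precisely this reconciliation of the two a priori distinct meanings of ``$\pi(f)u$'' across items~(e) and~(h); once identified through the injectivity of $V_e$, the chain of inclusions closes immediately.
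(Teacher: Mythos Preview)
Your argument is correct and is essentially an explicit unpacking of the paper's one-line proof, which invokes item~d) of Proposition~\ref{cuno} with $\E=\Ss_w$, $\F=\T_w$, together with item~\ref{bar}). The cyclic chain of inclusions you write out is exactly what that citation encodes: Proposition~\ref{cuno}(d) furnishes the equality of the middle and right sets, and the reference to item~\ref{bar}) is precisely the reconciliation you carry out in Step~3, identifying the $\Ss_w$-valued Fourier transform of~\ref{bar}) with the $\Ss_w'$-valued one of~\ref{sopra})/\ref{sotto}) via the injectivity of $V_e$. One cosmetic remark: in Step~2 the fact $V_eT\in\mathcal M^{\U_w}$ comes from item~(g) (the description of the range of $V_e$), not from~(h); and throughout you should strictly write $V_eT\in j(\T_w)$ rather than $V_eT\in\T_w$, though this abuse is also present in the paper.
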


The fundamental requirement~\eqref{eqb:1}  states that $K\in \T_w\subset \T_w'=\T_w^\#=\U_w$, 
whereas~\eqref{eqb:52} is the reconstruction formula for the distributions in
$\Ss_w'$, namely the fact that for any $T\in\Ss_w'$ the formula
\begin{equation}
\scal{T}{v}_{\Ss_w} =\int_G
\scal{T}{\tau(x)u}_{\Ss_w} \scal{\pi(x)u}{i(v)}_\hh\,dx \label{eqb:44}
\end{equation}
holds for all $v\in\Ss_w$, where the integral converges since $\scal{\pi(\cdot)u}{i(v)}_\hh$ is
in $\T_w$ by definition of $\Ss_w$, and $\scal{T}{\tau(\cdot)u}_{\Ss_w} $ is
in $\U_w=\T_w^\#$ since the range of $V_e$ is contained in $\U_w$ (for
arbitrary target spaces this last property could fail).

\begin{proof}
Since $K\in j(\T_w)\subset \U_w=\T_w^\#$, Assumption~\ref{H1} is
satisfied. Furthermore, since $j(\T_w)\subset L^2_w(G)\subset L^2(G)$ by \eqref{eqb:46},
also   Assumption~\ref{H2} is satisfied. The
topology induced by the family of semi-norms
$\set{\nor{v}_{p,\Ss_w}}_{p\in I}$ is the initial topology on $\Ss_w$
induced by the map $V_0$, as in the proof of Corollary~\ref{topology}.
  \begin{enumerate}[a)]
  \item By  Theorem~\ref{tuno} which does not depend on Assumption~\ref{H3},
    the space $\Ss_w$ is a  Fr\'echet  space 
    isomorphic to  $\mathcal M^{\T_w}$ and the canonical inclusion
     $i:\Ss_w\to\hh$ is continuous and  with dense range.
     Furthermore, since $\mathcal M^{\T_w}$ is a closed subspace of a
    reflexive Fr\'echet  space,  both $\mathcal M^{\T_w}$  and $\Ss_w$
    are reflexive.
\item Apply Theorem~\ref{tuno}.
\item  Apply Theorem~\ref{tuno} for the main statement; the intertwining property is easily checked.
\item Apply Theorem~\ref{lsei}.
\item Fix $f\in \T_w$. By~\eqref{eqb:46} $j(f)\in L^2(G)$ and by Proposition~\ref{tre} there
  exists $\pi(f)u\in\hh$ such that $V_2\pi(f)u=j(f)\con K$.
We claim that $j(f)$ and $K$ are convolvable (see   the appendix for the definition) and $j(f)\con K \in j(\T_w)$. It is enough to show that for all $r\in I$, $\abs{wf}\con
\abs{wK}\in L^r(G)$. Indeed,  given $x\in G$
\begin{align*}
w(x)\int_G \abs{f(y)}\abs{K(y^{-1}x)}\, dy & =  
 \int_G\abs{w(y)f(y)}\abs{w(y^{-1}x)K(y^{-1}x)}\frac{w(x)}{w(y)w(y^{-1}x)}dy\\
& \leq \abs{wj(f)}\con \abs{wK}(x),
\end{align*}
by~\eqref{eqb:33}. Define $p=q= \frac{2r}{1+r}>1$, so that
$\frac{1}{p}+\frac{1}{p}=\frac{1}{r}+1$. Then by assumption $wj(f)\in L^p(G)$, 
$wK \in L^q(G)$ and $\widecheck{wK}=w\check{K}\in L^q(G)$. 
Hence~\eqref{eq:5} applies, showing  that  $\abs{wj(f)}$ and  $\abs{wK}$ are
convolvable,  $\abs{wj(f)}\con \abs{wK}\in L^r(G)$ and 
$ \nor{\abs{wj(f)}\con \abs{wK} }_r\leq C \nor{wf}_p$, where $C$ is a
constant depending on $q$ and $K$.  Hence $j(f)\con K\in
L^r_w(G)$ and 
\begin{equation}
\nor{j(f)\con K }_{r,w}\leq C \nor{j(f)}_{p,w}.\label{eq:24}
\end{equation}
Therefore $j(f)\con K\in j( \T_w)$.
By definition of $\Ss_w$,  $\pi(f)u\in\Ss_w$ and $j(V_0\pi(f)u)=j(f)\con K$. 
The map $f\mapsto  \pi(f)u$ is continuous by~\eqref{eq:24}.
By Theorem~\ref{lsei}, the map $\mathcal M^{\T_w}\ni f\mapsto \pi(f)u\in \Ss_w$
is the inverse of $V_0$. 
\item Fix $\Phi\in \U_w$. By linearity we can assume that $\Phi$ is in some $L^p_{w^{-1}}(G)$, so that $\Phi j(f)$ is in $L^1(G)$ for all $f\in \T_w$.  In particular  for all $v\in\Ss_w$ we have $\Phi j(V_0v)\in L^1(G)$ because $V_0v \in \T_w$, and by Proposition~\ref{due} there exists
$\pi(\Phi)u\in \Ss_w'$.
Furthermore, recalling that $\tr{\,V_0}$ is a linear map from $\T_w'$ onto
$\Ss_w'$,  for all $v\in\Ss_w$ we have
\[ \scal{\tr{V_0}\Phi}{v}_{\Ss_w}  = \scal{\Phi}{V_0 v}_{\T_w}=\int_G \Phi(x)
\overline{\scal{i(v)}{\pi(x)u}_\hh}dx= \int_G \Phi(x)
\scal{\pi(x)u}{i(v)}_\hh dx.\]
Comparing this equation with~the definition of $\pi(\Phi)u$  we get
$\tr{\,V_0}\Phi= \pi(\Phi)u$ and, by~\eqref{eq:22},  $V_e \pi(\Phi)u = \Phi \con \vu$.
\item Fix  $T\in \Ss_w'$. Since $\,\tr{\,V_0}$ is surjective, there exists $\Phi\in\U_w=\T_w'$ such that
$T=\,\tr{\,V_0}\Phi=\pi(\Phi)u$, so that  $V_eT= \Phi \con \vu$.  
To show that $V_eT\in\U_w$ we prove that 
$\Phi$ and $K$ are convolvable whenever $\Phi\in \U_w$, their convolution $\Phi\con K$ is in $\U_w$ and the map
$\Phi\mapsto \Phi\con K$ is continuous from $\U_w$ into $\U_w$.
By definition of $\U_w$,  it is
enough to show that, given $p\in I$, for all
$\Phi \in L^p_{w^{-1}}(G)$, $\Phi$ and $K$ are convolvable,  $\Phi\con K\in L^{2p}_{w^{-1}}(G)$ and the
map $\Phi\mapsto \Phi\con K$ is continuous from $L^p_{w^{-1}}(G)$ into $L^{2p}_{w^{-1}}(G)$.
As above,
\begin{align*}
  w(x)^{-1} \int_G \abs{\Phi(y)} \abs{K(y^{-1}x)}\, dy & = w(x)^{-1}
  \int_G \abs{\Phi(xy)} \abs{K(y^{-1})}\, dy \\
& = \int_G \abs{w^{-1}(xy)\Phi(xy)}
\abs{K(y^{-1})}\frac{w(xy)}{w(x)}\, dy \\
& \leq \int_G \abs{w^{-1}(xy)\Phi(xy)}
\abs{w(y^{-1})K(y^{-1})} \, dy \\
& = \abs{w^{-1}\Phi}\con \abs{wK}(x),
\end{align*}
where in the third line we used both~\eqref{eqb:33}
and~\eqref{eqb:45}. By assumption $w^{-1}\Phi\in
L^p(G)$, $wK \in L^q(G)$ where we set $q=\frac{2p}{2p-1}>1$,  and
$\widecheck{wK}=w\check{K}\in L^q(G)$. Since 
$\frac{1}{p}+\frac{1}{q}=\frac{1}{2p}+1$, ~\eqref{eq:5} gives that  $\abs{w^{-1}\Phi}$ and $\abs{wK}$ are
convolvable,  $\abs{w^{-1}\Phi}\con \abs{wK}\in L^{2p}(G)$ and 
\[
\nor{\abs{w^{-1}\Phi}\con \abs{wK} }_{2p}\leq C \nor{w^{-1}\Phi}_p
\]
where $C$ is a
constant depending on $p$ and $K$.  Hence $\Phi\con K\in
L^{2p}_{w^{-1}}(G)$ and 
\[
\nor{\Phi\con K }_{{2p},w^{-1}}\leq C \nor{\Phi}_{p,w^{-1}}.
\]
This proves the claim. Note that, since $\Phi\mapsto \Phi\con K$ is
continuous,  $\mathcal M^{\U_w}$ is closed. We observe {\it en passant} that $\U_w$ is not
  a Fr\'echet space,  so that Proposition~\ref{uno} does not apply.

We now prove that $V_eT=V_eT\con \vu$. Since $\abs{\vu}\in \T_w$,
reasoning as in the proof of item~\ref{bar}), $\abs{\vu}\con\abs{\vu}\in\T_w$.
Furthermore,  if  $g=\abs{\vu}\con\abs{\vu}$, so that $\check{g}=g$,  then as above $\Phi$ and $g$
are convolvable and  \eqref{eq:35} gives that
\[ V_eT\con \vu= (\Phi\con \vu)\con \vu=   \Phi\con (\vu\con \vu)=
\Phi\con \vu=V_eT,\]
so that the range of $V_e$ is contained in $\mathcal M^{{\U_w}}$.
From~\ref{sopra}) we know  that  
$\pi(\Phi)u\in\Ss_w'$ whenever $\Phi\in \mathcal M^{{\U_w}}$ and that $V_e \pi(\Phi)u=\Phi\con K=\Phi$, showing that
$V_e$ is onto $\mathcal M^{{\U_w}}$ and  that the map 
\[  \mathcal M^{{\U_w}}\ni \Phi\mapsto \pi(\Phi)u\in \Ss_w'\]
is the inverse of $V_e$, as claimed in item~\ref{sotto}).

Next, we prove \eqref{eqb:52}. Fix $v\in \Ss_w$ and define the map $\Psi:G\to\Ss_w$ by
\[ \Psi(x)=\scal{\pi(x)u}{i(v)}_\hh \tau(x)u =
\overline{V_0v(x)}\tau(x)u.\] 
For all  $T\in \Ss_w'$, $V_eT\in {\U_w}$ and $V_0v\in \T_w$, so that
$x\mapsto \scal{T}{\Psi(x)}_{\Ss_w} $ is in $L^1(G)$.   

Since  $\Ss_w$  is a reflexive
Fr\'echet  space, we can regard  $\Ss_w$ as the dual of $\Ss_w'$, which has the property
  (GDF) by Proposition~3 Chapter 6. Appendix No.2 of \cite{BINT1}. The
fact that the map $x\mapsto \scal{T}{\Psi(x)}_{\Ss_w} $ is in $L^1(G)$, means that $\Psi$ is scalarly integrable.
Theorem~\ref{Gelfand-Dunford}
shows that its (scalar) integral is in $\Ss_w$, {\em i.e.}
 that  there exists $\psi\in\Ss_w$ such that 
\[ \scal{T}{\psi}_{\Ss_w}  = \int_G \scal{T}{\tau(x)u}_{\Ss_w} 
\scal{\pi(x)u}{v}_\hh dx.\] 
With the choice $T=\tr{i}(z)$, $z\in \hh$, \eqref{eq:2} gives that
$\scal{z}{i(\psi)}_\hh=\scal{z}{i(v)}_\hh$. Since this last equality
holds true for all $z\in \hh$ and $i$ is injective, then $\psi=v$ and this proves \eqref{eqb:52}.
Furthermore, the reproducing formula~\eqref{eqb:52}  implies that
$V_e$ is injective.

Finally, we prove that $V_e$ is continuous. Fix a bounded subset $B$
in $\T_w$. By e)  the map $f \mapsto \pi(f)u$ is continuous from
$\T_w$ into $\Ss_w$. Then
$B'=\pi(B)u$ is a bounded subset of $\Ss_w$. Furthermore, given $T\in \Ss_w'$
\[
\sup_{f\in B}\abs{ \scal{V_eT}{f}_{\T_w}} = \sup_{f\in B}\abs{\int_G
\scal{T}{\tau(x)u}_{\Ss_w} \overline{f(x)} dx} = \sup_{f\in
B}\abs{\scal{T}{\pi(f)u}_{\Ss_w} }= \sup_{v\in
B'}\abs{\scal{T}{v}_{\Ss_w} }.
\]
Since $B'$ is a 
bounded subset of $\Ss_w$, the map $T\mapsto \sup_{v\in
B'}\abs{\scal{T}{v}_{\Ss_w}}$ is continuous, hence  $V_e$ is
such.  The rightmost equality~\eqref{eq:38} is a consequence of the definition
  of ${\U_w}$ and the inclusion follows from the fact that
  $V_e\Ss'\subset L^\infty_{w^{-1}}(G)$. Indeed, for all $x\in G$ 
\[
\abs{V_eT(x)} 
=\abs{\scal{T}{\tau(x)u}}_\Ss\leq C_T \max_{i\leq n}\nor{\tau(x)u}_{p_i,\Ss}
\leq C \max_{i\leq n}\nor{\ell(x)K}_{p_i} \leq C  \max_{i\leq n}\nor{K}_{p_i}\, w(x)
\]
where $C$ is a constant depending on $T$,  $p_1,\ldots,p_n$
are suitable numbers in $I$ also depending on $T$, and the last bound
is a consequence of Lemma~\ref{leftreg}. 

\item See the proof of the above item.
\item Apply d) of Proposition~\ref{cuno} with $\F=\T_w$ and $\E=\Ss_w$, taking into account~\ref{bar}). \qedhere
  \end{enumerate}
\end{proof}

We summarize the findings in this section in the following theorem
which is one of the main results of this paper since it shows that our
analysis is indeed applicable.
\begin{thm}\label{stephan}
If $K\in\T_w$, then   Assumptions $\ref{H1}\div\ref{H4}$ are satisfied for $\T_w$.
\end{thm}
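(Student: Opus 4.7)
The plan is to leverage Theorem~\ref{intersections}, which already establishes the underlying machinery without directly invoking Assumptions~\ref{H3} or~\ref{H4}; I just need to assemble the relevant facts and verify each assumption in turn.

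Assumption~\ref{H1} is immediate from the hypothesis $K\in\T_w$ together with the chain of inclusions $\T_w\subset\U_w=\T_w^\#$, which follows from $w^{-1}\leq w$ (a consequence of~\eqref{eqb:46}) as observed just before Theorem~\ref{intersections}. Assumption~\ref{H2} reduces to Cauchy--Schwarz: by~\eqref{eqb:46} we have $\mathcal M^{\T_w}\subset\T_w\subset L^2(G)$, while $V_2\hh\subset L^2(G)$ by definition of the voice transform on a reproducing representation, so the pointwise product of any $j(f)$ with any $V_2v$ is integrable.

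The substantive verifications are \ref{H3} and \ref{H4}, and both follow quickly from Theorem~\ref{intersections} g). For \ref{H3}, I would invoke Lemma~\ref{b}: since $V_e$ is shown there to be injective on $\Ss_w'$, the orbit $\tau(G)u$ is total in $\Ss_w$. The topological isomorphism $V_0:\Ss_w\to\mathcal M^{\T_w}$ of Theorem~\ref{intersections} d) sends $\tau(x)u$ to $\ell(x)K$, so totality transfers to give the desired density of $\operatorname{span}\{\ell(x)K\mid x\in G\}$ in $\mathcal M^{\T_w}$. For \ref{H4}, Theorem~\ref{intersections} g) tells us $V_e\Ss_w'\subset\mathcal M^{\U_w}$, which by the defining property of $\mathcal M^{\U_w}$ immediately yields the reproducing identity $V_eT\con K=V_eT$; and since $V_eT\in\U_w=\T_w^\#$ while $K\in\T_w$, the integrability $K\cdot V_eT\in L^1(G)$ is exactly the K\"othe dual pairing.

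The only potential obstacle is circularity: I must confirm that the items of Theorem~\ref{intersections} I rely upon are not themselves proved using \ref{H3} or \ref{H4}. A check of that proof is reassuring: it invokes only \ref{H1} and \ref{H2} at the outset, item~a) is explicitly flagged as independent of \ref{H3}, and the injectivity of $V_e$ together with the strict morphism property of $V_0$ both rest on the reproducing formula~\eqref{eqb:52}, which is itself obtained via Gelfand--Dunford combined with the reflexivity of $\Ss_w$. No use of \ref{H3} or \ref{H4} occurs upstream, so the chain of reasoning is legitimate and the verification of all four assumptions goes through.
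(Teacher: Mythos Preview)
Your proposal is correct and follows essentially the same approach as the paper: both derive all four assumptions from the machinery of Theorem~\ref{intersections}, with~\ref{H1} and~\ref{H2} coming from the inclusions $\T_w\subset\U_w=\T_w^\#$ and $\T_w\subset L^2(G)$, and~\ref{H3},~\ref{H4} from item~g) (the paper phrases these via the reconstruction formula~\eqref{eqb:44} and its specialization to $v=\tau(x)u$, while you route~\ref{H3} through Lemma~\ref{b} and~\ref{H4} through the defining property of $\mathcal M^{\U_w}$, but the underlying content is identical). Your explicit circularity check is a welcome addition that the paper leaves implicit.
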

\begin{proof}
Under the hypothesis~\eqref{eqb:1}, Assumption~\ref{H1} is 
satisfied, and  $j(\T_w)\subset L^2_w(G)\subset L^2(G)$ implies
 Assumption~\ref{H2}. The reconstruction formula~\eqref{eqb:44}
clarifies that $u$ is a cyclic vector for $\tau$, which is equivalent to
 Assumption~\ref{H3} since $V_0\tau(x)u=\ell(x)K$ and $V_0$ is
an injective strict morphism from $\Ss_w$ onto $\mathcal M^{\T_w}$.
Finally,~\eqref{eqb:44}  with $v=\tau(x)u$ implies that 
Assumption~\ref{H4} holds true.
\end{proof}

Observe that Theorem~\ref{stephan} paves the way for a coorbit space theory
with a specific choice of target space, namely $\T_w$. 
Indeed, if $Y$ is a Banach space  continuously
embedded in $\Lz$ and $\la$-invariant, and we assume that $K\in Y^{\#}$
and that  $\mathcal Mxs^Y$ is
a subspace of $\U_w$, then
Assumptions $\ref{H5}$ and $\ref{H6}$ are satisfied.
Hence Proposition~\ref{prop6} holds true, giving rise to a coorbit theory for $Y$.

We summarize the general scheme in the following picture.
\begin{figure}[H]
\begin{center}
\includegraphics[scale=.55]{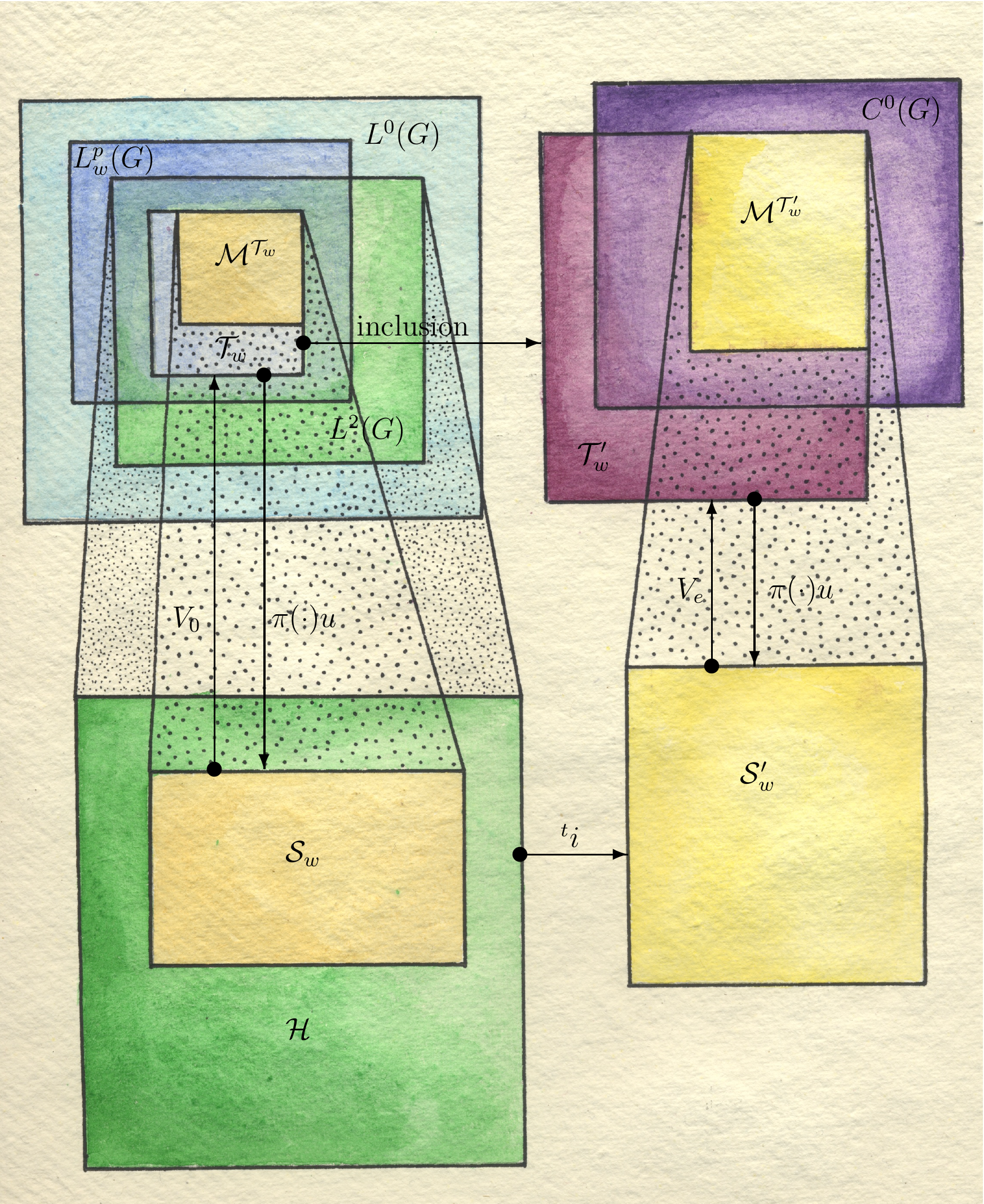}
\caption{Objects on the group on top row, signals on bottom row.}
\label{default}
\end{center}
\end{figure}

\subsection{Band-limited functions}\label{BAND} 
As a toy example, we consider the space of band-limited signals on the real line.
 Although  elementary, and certainly very natural, this case can not be handled by the classical coorbit machinery. 
 This is somewhat unsatisfactory, because the sinc function is one of the first examples of reproducing kernels which comes to mind.
Our theory does handle it, and the natural coorbit spaces  that arise  are the Paley--Wiener $p$-spaces.

In this section, $G$ is the additive group $\R$ and the Haar
  measure  is the Lebegue measure $dx$.  We denote by 
$\mathbb S(\R)$ the Fr\'echet space of rapidly
  decreasing functions and by $\mathbb S(\R)'$ the 
  space of tempered distributions. The Fourier transform on $\mathbb
  S(\R)$ and $\mathbb S(\R)'$ is denoted by $\mathcal F$. Regarding
  $L^2(\R)$ as a subspace of $\mathbb S(\R)'$,  we set $\widehat{v}=\mathcal F v$  
  for any $v\in L^2(\R)$. 

The representation $\pi$ is the regular representation restricted to 
 the Paley--Wiener space of functions with band in the fixed compact 
interval $ \Omega \subset \R $, namely
\[
\hh=B^2_\Omega = \{ v \in L^2(\R) : \supp{\widehat{v}} \subseteq \Omega \} . 
 \]
 Strictly speaking, the elements of $B^2_\Omega$ are not functions, but equivalence classes of functions.
 However, in view of the Paley--Wiener--Schwartz theorem \cite{hor90},
 each equivalence class in $B^2_\Omega$ has a unique representative which is continuous (in fact smooth).
 We are thus allowed to identify each class with its smooth representative, and  we shall do so.
 
 Since the group $\R $ acts on $B^2_\Omega$ by translations:
\[
 \pi(b)v(x) = v(x-b), \qquad v \in B^2_\Omega ,
 \]
on the frequency side,   $ \widehat{\pi} = \ff\pi\ff^{-1} $   acts on
$ \ff\hh = L^2(\Omega)$ by modulations: 
\[
\widehat{\pi}(b)\widehat{v}(\xi) = \e^{-2\pi i b\xi} \widehat{v}(\xi), \qquad v \in{B^2_\Omega}.
\]
This representation is not irreducible: any  subset $ \Xi \subseteq \Omega $ gives a subrepresentation on $ B^2_\Xi $.
 For the reader's convenience, we summarize in the next proposition the main facts that are relevant 
 to our discussion.
 \begin{prop} \label{prop:band}
The representation $\pi$ is reproducing and the following facts
  hold true.
  \begin{enumerate}[a)]
   \item \label{item:band-wavelet}
    A vector $ u \in B^2_\Omega $ is admissible if and only if $ \abs{\widehat{u}} = 1 $ almost everywhere on $\Omega$.
    In this case, the kernel $K$ is
    $$ K = \scal{u}{\pi(\cdot)u}_\hh=\ff^{-1}\chi_\Omega , $$
    where $\chi_\Omega$ is the characteristic function on $\Omega$.
   \item \label{item:band-voice}
    Let $u$ be an admissible vector.
    Then $u(x) = K(x)$ for every $x\in\R$ 
if and only if the corresponding voice transform $V_2$ is the inclusion
    $$ {V_2} : B^2_\Omega \hookrightarrow L^2(\R) . $$
   \item
    If $ \Omega = [-\omega,\omega] $ is a symmetric interval, then the kernel is the sinc function
    $$ K(b) = 2\omega\sinc(2\omega\pi b) , $$
    where $ \sinc x = \sin x/x $.
  \end{enumerate}
 \end{prop}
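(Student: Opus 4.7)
My plan is to push everything to the frequency side, where the computations become transparent. Since $ u, v \in B^2_\Omega $ have Fourier transforms supported in $\Omega$, Plancherel's identity gives
\[
V_2 v(b) = \langle v, \pi(b) u \rangle_{L^2} = \int_\Omega \hat v(\xi)\,\overline{\hat u(\xi)}\,e^{2\pi i b \xi}\,d\xi = \mathcal F^{-1}\bigl(\hat v\,\overline{\hat u}\bigr)(b).
\]
This single identity is the engine that drives all three items.

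For item \ref{item:band-wavelet}, I would apply Plancherel to get
\[
\nor{V_2 v}_2^2 = \int_\Omega |\hat v(\xi)|^2\,|\hat u(\xi)|^2\,d\xi,
\]
while $\nor{v}_\hh^2 = \int_\Omega |\hat v|^2$. The isometry condition $\nor{V_2 v}_2 = \nor{v}_\hh$ for every $v \in B^2_\Omega$ forces $|\hat u|^2 = 1$ a.e.\ on $\Omega$ (test against $\hat v = \chi_E$ for measurable $E\subseteq\Omega$ of finite measure). Conversely, any such $u$ makes $V_2$ an isometry, so in particular $\pi$ is reproducing. For such an admissible $u$, taking $v = u$ in the formula above gives $K = V_2 u = \mathcal F^{-1}(|\hat u|^2) = \mathcal F^{-1}\chi_\Omega$.

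For item \ref{item:band-voice}, observe that, by Paley--Wiener--Schwartz, every element of $B^2_\Omega$ has a unique continuous representative, so pointwise identities make sense. If $u = K$ on $\R$, then $\hat u = \chi_\Omega$, hence $V_2 v = \mathcal F^{-1}(\hat v\,\chi_\Omega) = \mathcal F^{-1}\hat v = v$ for every $v \in B^2_\Omega$ since $\supp \hat v \subseteq \Omega$; thus $V_2$ is the canonical inclusion. Conversely, if $V_2$ is the inclusion, then $K = V_2 u = u$. For item \ref{item:band-voice} the only slightly delicate point is the passage from a.e.\ equality (on the Fourier side) to pointwise equality, but this is handled by continuity of the representatives.

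Item c) is a direct evaluation of $\mathcal F^{-1}\chi_{[-\omega,\omega]}$:
\[
K(b) = \int_{-\omega}^{\omega} e^{2\pi i b\xi}\,d\xi = \frac{\sin(2\pi\omega b)}{\pi b} = 2\omega\,\sinc(2\pi\omega b).
\]
There is no real obstacle here: the whole proposition is a transparent translation of reproducing-kernel calculus into the Fourier domain, and the only conceptual subtlety is keeping track of the distinction between a.e.\ equality in $L^2$ and pointwise equality of the continuous representatives guaranteed by Paley--Wiener--Schwartz.
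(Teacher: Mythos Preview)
Your proof is correct and follows essentially the same route as the paper: both push everything to the frequency side via Plancherel to obtain $V_2 v = \mathcal F^{-1}(\hat v\,\overline{\hat u})$, then read off the admissibility condition, the kernel, and the sinc computation directly. Your added remarks about testing against $\chi_E$ and invoking Paley--Wiener--Schwartz for pointwise equality are minor elaborations on the same argument, not a different approach.
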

 \begin{proof}
The fact that $\pi$ is reproducing follows from item a).
  \begin{enumerate}[a)]
   \item
    Applying the Plancherel identity, we can compute the voice transform as
    \begin{equation} \label{eq:band-voice}
     {V_2} v(b) = \scal{v}{\pi(b)u} = \scal{\widehat{v}}{\widehat{\pi}(b)\widehat{u}}
     = \int_{\widehat\R} \widehat{v}(\xi)\overline{\widehat{u}}(\xi)\e^{2\pi i b\xi} d\xi = \ff^{-1}(\widehat{v}\overline{\widehat{u}})(b) ,
    \end{equation}
    whose squared norm, again by Plancherel, is
    $$ \nor{{V_2} v}^2 = \int_{\widehat\R} \abs{\widehat{v}(\xi)\overline{\widehat{u}}(\xi)}^2 d\xi . $$
    On the other hand, Plancherel also entails
    $$ \nor{v}^2 = \int_{\R} \abs{v(x)}^2 dx = \int_{\widehat\R} \abs{\widehat{v}(\xi)}^2 d\xi . $$
    Therefore, $u$ is admissible if and only if $
    \abs{\widehat{u}(\xi)} = 1 $ for almost every $ \xi \in
    \supp{\widehat{v}} \subseteq \Omega $.   Since $\Omega$ is compact,
    vectors $u\in B^2_\Omega$ satisfying the above
    condition  clearly exist and, hence,  $\pi$ is reproducing.
 If $u$ is admissible, using \eqref{eq:band-voice} we obtain $ K ={V_2} u = \ff^{-1}(\abs{\widehat{u}}^2) = \ff^{-1}(\chi_\Omega) $.
    
   \item
    Suppose that $ u = K $.
    Then, in view of item \eqref{item:band-wavelet} we have $
    \widehat{u} = \chi_\Omega $, and  equality~\eqref{eq:band-voice} gives
    ${V_2} v = \ff^{-1}(\widehat{v}) = v $ for every $ v \in B^2_\Omega $,
so that  ${V_2}$ is the natural inclusion. Conversely, if this is the case, then $ K = {V_2} u = u $.
   \item If  $\Omega = [-\omega,\omega]$, from \eqref{eq:band-voice} it follows
    \begin{equation*}
     K(b) = {V_2} u(b) = \int_{\widehat\R} \abs{\widehat{u}(\xi)}^2 \e^{2\pi ib\xi} d\xi = \int_{-\omega}^\omega \e^{2\pi ib\xi} d\xi
     = 2\omega\sinc(2\omega\pi b) . \qedhere
    \end{equation*}
  \end{enumerate}
 \end{proof}

 From now on we set $\Omega= [-\omega,\omega] $ and  
\[u = K = \ff^{-1}\chi_\Omega= 2\omega\sinc(2\omega\pi\cdot).\]
Clearly, $K$ is not in $L^1(\R)$, but it belongs to $L^p(\R)$ for every $ p > 1 $.
We  thus choose the weight $w=1$ and take
 $$ \T = \bigcap_{{p\in I}} L^p(\R) $$
 as target space to construct coorbits (recall that $I=(1,+\infty)$).

 For $ p \in [1,+\infty) $, we define the Paley--Wiener $p$-spaces
\[
B^p_\Omega = \{ f \in L^p(\R) : \supp{\ff f}\subseteq \Omega \}.
\]
 Recall that the Fourier transform maps $L^p$ to $L^{p'}$ for $ p \leqslant 2 $ ,
 while for $ p > 2 $ we get distributions that in general are not  functions  \cite{hor90}.
 
 The spaces $B^p_\Omega$ are usually defined in the literature as
 the spaces of the entire functions of fixed exponential type whose restriction to the real line is $p$-integrable \cite{plpo37}.
 This definition is equivalent  to ours since a Paley--Wiener theorem holds for all $ p \in [1,+\infty) $.
 In particular, all these functions  are indefinitely differentiable on $\R$.
 Moreover, if $ f \in B^p_\Omega $ with $ p < +\infty $, then $ f(x) \rightarrow 0 $ as $ x \rightarrow \pm\infty $, hence
\[
B^p_\Omega \subset C^\infty_0(\R), \qquad 1 \leqslant p < +\infty.
\]
Consequently, the Paley--Wiener spaces are nested and increase with $p$:
 $$ B^p_\Omega \subseteq B^q_\Omega \qquad 1 \leqslant p \leqslant q <+\infty . $$
We are going to identify our coorbit spaces as Paley--Wiener spaces.
To show this, we shall repeatedly make use of the following fact.
 \begin{lem} \label{lemma:band}
There exists
 a  family of functions $\{\widehat{g_\varepsilon}\} \subset C^\infty_c(\widehat\R) $ satisfying
  \begin{enumerate}[i)]
   \item $ \lim\limits_{\varepsilon \rightarrow 0} \widehat{g_\varepsilon} = \chi_\Omega $ in $L^q(\widehat\R)$ for every $ q \geqslant 1 $;
   \item $ \chi_{[-\omega+\varepsilon,\omega-\varepsilon]} \leqslant \widehat{g_\varepsilon} \leqslant \chi_{[-\omega-\varepsilon,\omega+\varepsilon]} $;
   \item $ \nor{ \partial \widehat{g_\varepsilon}}_\infty \lesssim \varepsilon^{-1} $.
  \end{enumerate}
such that for all  $ f \in L^p(\R) $, with $p \geqslant 1 $,  in $\mathbb S'(\widehat\R)$
  \begin{equation}
\ff(f \con K) = \lim_{\varepsilon \rightarrow 0} \ff(f)
  \widehat{g_\varepsilon}.\label{eq:30} 
\end{equation}
 \end{lem}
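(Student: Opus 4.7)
My plan is to construct $\widehat{g_\varepsilon}$ by mollifying $\chi_\Omega$ and then reduce the Fourier identity to a simple $L^q$-approximation on the spatial side. Concretely, fix a nonnegative even $\phi \in C_c^\infty(\widehat\R)$ with $\supp\phi \subseteq [-1,1]$ and $\int \phi = 1$, set $\phi_\varepsilon(\xi) = \varepsilon^{-1}\phi(\xi/\varepsilon)$, and put
$$\widehat{g_\varepsilon} := \phi_\varepsilon * \chi_\Omega \in C_c^\infty(\widehat\R).$$
All three required properties are immediate: (ii) from a support inspection; (iii) from $\nor{\partial\widehat{g_\varepsilon}}_\infty \leqslant \nor{\partial\phi_\varepsilon}_1 = \varepsilon^{-1}\nor{\phi'}_1$; and (i) because $\widehat{g_\varepsilon} - \chi_\Omega$ is bounded by $1$ and supported on a set of measure at most $4\varepsilon$, so $\nor{\widehat{g_\varepsilon} - \chi_\Omega}_q^q \leqslant 4\varepsilon \to 0$ for every $q \geqslant 1$.

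The crucial spatial-side computation is
$$g_\varepsilon := \ff^{-1}\widehat{g_\varepsilon} = \ff^{-1}(\phi_\varepsilon * \chi_\Omega) = (\ff^{-1}\phi_\varepsilon)(\ff^{-1}\chi_\Omega) = \tilde\phi(\varepsilon\,\cdot)\,K,$$
where $\tilde\phi := \ff^{-1}\phi \in \mathbb S(\R)$ satisfies $\tilde\phi(0) = \int \phi = 1$. Hence $g_\varepsilon \to K$ pointwise and $\abs{g_\varepsilon} \leqslant \nor{\tilde\phi}_\infty \abs{K}$; since $K \in L^q(\R)$ for every $q \in \I$, dominated convergence yields $g_\varepsilon \to K$ in $L^q(\R)$ for every $q \in \I$.

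Finally, fix $f \in L^p(\R)$ with $p \in [1,+\infty)$ and $\psi \in \mathbb S(\widehat\R)$. The evenness of $\phi$ makes $\widehat{g_\varepsilon}$, $g_\varepsilon$, and $K$ all even, so Fourier duality on $\mathbb S'$ together with Fubini (justified because $K \in L^q$ for every $q \in \I$, which makes the relevant double integral absolutely convergent against $f \in L^p$ and $\ff\psi \in \mathbb S$) yields
$$\langle \ff(f \con K) - \widehat f\,\widehat{g_\varepsilon},\, \psi\rangle = \langle f,\, (K - g_\varepsilon) * \ff\psi\rangle.$$
So it suffices to show $(K - g_\varepsilon) * \ff\psi \to 0$ in $L^{p'}(\R)$. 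For $p \in (1,+\infty)$ this follows from Young's inequality: $\nor{(K-g_\varepsilon)*\ff\psi}_{p'} \leqslant \nor{K-g_\varepsilon}_{p'}\nor{\ff\psi}_1 \to 0$; for $p = 1$ (so $p' = +\infty$), one uses Young instead with exponents $(2,2,+\infty)$ to obtain $\nor{(K-g_\varepsilon)*\ff\psi}_\infty \leqslant \nor{K-g_\varepsilon}_2\nor{\ff\psi}_2 \to 0$. H\"older's inequality in the pairing with $f$ closes the argument. The main subtlety is that for $p > 2$ the Fourier transform $\widehat f$ is only a tempered distribution, so the product $\widehat f\widehat{g_\varepsilon}$ has to be read in $\mathbb S'(\widehat\R)$; it is precisely the compact support of $\widehat{g_\varepsilon}$ that makes this product unambiguous and that allows these distributional products to converge to $\ff(f\con K)$.
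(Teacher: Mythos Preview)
Your proof is correct and follows essentially the same route as the paper: the construction $\widehat{g_\varepsilon}=\phi_\varepsilon*\chi_\Omega$ is identical, your verification of (i)--(iii) matches (your derivation of (i) from (ii) is in fact slightly cleaner than the paper's dominated-convergence argument), and the key spatial identity $g_\varepsilon=\tilde\phi(\varepsilon\cdot)K$ together with $g_\varepsilon\to K$ in every $L^q$, $q>1$, is exactly what the paper uses. The only cosmetic difference is in the final step: the paper argues directly that $f*g_\varepsilon\to f*K$ in some $L^r$ by Young, hence in $\mathbb S'$, and then applies the convolution theorem $\ff(f*g_\varepsilon)=\widehat f\,\widehat{g_\varepsilon}$; you instead test against $\psi\in\mathbb S$ and reduce to $(K-g_\varepsilon)*\ff\psi\to 0$ in $L^{p'}$, which is the same Young estimate in dual form.
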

Take $f\in L^p(\R)$.  By Young's inequality~\eqref{eq:5} we know that  $f \con K\in
  L^r$ for some
  $r> 1$, so that $f\con K\in\mathbb S'(\R)$ and the left hand side is the Fourier transform of a tempered distribution. Similarly, 
  $\ff(f)  \widehat{g_\varepsilon}\in\mathbb S'(\R)$ because  $\widehat{g_\varepsilon}\in C^\infty_c(\R)$, and on the right hand side we also have Fourier transforms of  tempered distributions.

 \begin{proof}
 Take
\[
 \widehat{h} \in C^\infty_c(\R), 
 \quad \supp{\widehat{h}} \subseteq [-1,1],
\quad \widehat{h} \geqslant 0,
 \quad \int_{\widehat\R} \widehat{h}(\xi) d\xi = 1,
  \]
  and then consider the corresponding  approximate identity
    $\{\widehat{h}_\varepsilon\}$ defined by  the dilations of $\hat{h}$
  \[
   \widehat{h}_\varepsilon(\xi) = \varepsilon^{-1}\widehat{h}(\xi/\varepsilon),\quad \varepsilon>0,
   \]
so that $ \widehat{h}_\varepsilon \in C_c^\infty(\widehat\R)$, and 
define $ \widehat{g_\varepsilon} = \widehat{h}_\varepsilon \con \chi_\Omega$.
Since both factors are $L^1$-functions, the convolution theorem gives that
 $g_\varepsilon= h_\varepsilon K$.
A classical result, see Corollary 3.4 of \cite{la93}, shows that 
$\widehat{g_\varepsilon} \rightarrow \chi_\Omega $ in $L^q(\R)$ for every $ q
\geqslant 1 $ and  $ \widehat{g_\varepsilon} \in C_c^\infty(\R) $.  Moreover,
  $$ \supp{\widehat{g_\varepsilon}} \subseteq \supp{\widehat{h}_\varepsilon} + \supp{\chi_\Omega} \subseteq [-\varepsilon,\varepsilon] + \Omega
  = [-\omega-\varepsilon,\omega+\varepsilon] . $$
  Expanding the convolution, we get
  $$ \widehat{g_\varepsilon}(\xi) = \varepsilon^{-1} \int_\Omega \widehat{h}((\xi-\tau)/\varepsilon) d\tau
  = \int_{(\Omega+\xi)/\varepsilon} \widehat{h}(\tau) d\tau $$
  by a change of variable.
  Notice here that if $ \abs{\xi} \leqslant \omega-\varepsilon $, then
  $ \abs{\xi + \varepsilon\tau} \leqslant \omega $ whenever $
  \abs{\tau} \leqslant 1 $, which means that
 $ (\Omega+\xi)/\varepsilon \supseteq [-1,1] \supseteq \supp{\widehat{h}} $. It follows that
  $$ \widehat{g_\varepsilon}(\xi) = \int_{\widehat\R} \widehat{h}(\tau) d\tau = 1 $$
  for every $ \xi \in [-\omega+\varepsilon,\omega-\varepsilon] $.
  The derivative of $\widehat{g_\varepsilon}$ is $ \partial(\widehat{h}_\varepsilon \con \chi_\Omega) = \partial\widehat{h}_\varepsilon \con \chi_\Omega $,
  and $ \partial\widehat{h}_\varepsilon(\xi) = \varepsilon^{-2}\partial\widehat{h}(\xi/\varepsilon) $.
  A change of variable then yields
  $$ \abs{\partial \widehat{g_\varepsilon}(\xi)} \leqslant \varepsilon^{-1} \int_{[-1,1]} \abs{\partial\widehat{h}(\tau)} d\tau
   \leqslant 2\sup(\abs{\partial\widehat{h}}) \varepsilon^{-1} . $$
   Finally, let $ h_\varepsilon =
   \ff^{-1}\widehat{h}_\varepsilon$. By dominated convergence, for all $x\in\R$
\[\lim_{\varepsilon
     \rightarrow 0} h_\varepsilon(x)=\int_\R \hat{h}(\xi)\,d\xi =1,\]
 so that
  $$ \lim_{\varepsilon \rightarrow 0} g_\varepsilon(x) 
  =\lim_{\varepsilon \rightarrow 0} h_\varepsilon(x)K(x)  = K(x) , $$
  and at the same time
  $$ \abs{h_\varepsilon(x)K(x)}^q \leqslant \nor{h}_\infty^q \abs{K(x)}^q $$
  for any $ q > 1 $. Therefore $ h_\varepsilon K \rightarrow K $ in $L^q(\R)$, by dominated convergence.
Young's inequality  implies now that $ f \con h_\varepsilon K \rightarrow f \con K $ in some $L^r(\R)$, hence as tempered distributions.
Therefore
  $$ \ff(f \con K) = \lim_{\varepsilon \rightarrow 0} \ff(f \con g_\varepsilon)
  = \lim_{\varepsilon \rightarrow 0} \ff(f)\ff(g_\varepsilon)
  = \lim_{\varepsilon \rightarrow 0} \ff(f)\widehat{g_\varepsilon} $$
  by the continuity of the Fourier transform and an application of the convolution theorem,
  because $f\in{\mathbb S}'(\R)$ and $g_\varepsilon\in{\mathbb S}(\R)$ (see Theorem~XV, Ch.~VII 
  in~\cite{sch66}).
 \end{proof}
 
 We are now ready to state the characterization of  the natural coorbit spaces relative to  band limited functions.
 \begin{prop}
  Let $ \Omega = [-\omega,\omega] $ and take $u = K = \ff^{-1}\chi_\Omega $.
  The space of test functions is
  $$ \Ss = \bigcap_{{p\in I}} B^p_\Omega $$
  and the space of distributions is
\[
\Ss' = \bigcup_{{p\in I}} B^p_\Omega. 
\]
The extended voice transform is the inclusion
  $$ V_e : \Ss' \hookrightarrow \U $$
  and the coorbits of the $L^p$ spaces are
\[
\Co{L^p(\R)} = \mathcal{M}^p = B^p_\Omega . 
\]
 \end{prop}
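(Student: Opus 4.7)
The plan is to reduce everything to the general machinery of Section~\ref{mainexample} and then carry out the one analytic step, namely $\mathcal M^{L^p(\R)}=B^p_\Omega$. Since $u=K$ and $\pi$ is reproducing by Proposition~\ref{prop:band}, the voice transform is the canonical inclusion $V_2\colon B^2_\Omega\hookrightarrow L^2(\R)$, so that $V_0v=v$ for every $v\in\Ss$; the definition of $\Ss$ then gives $\Ss=\set{v\in B^2_\Omega\mid v\in L^p(\R)\text{ for every }p\in I}=\bigcap_{p\in I}B^p_\Omega$, the last equality using $2\in I$ together with $B^2_\Omega\cap L^p(\R)=\set{v\in L^p(\R)\mid \supp\widehat v\subseteq\Omega}$. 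Because $K\in\T$, Theorem~\ref{stephan} yields Assumptions~\ref{H1}--\ref{H4}; for fixed $p\in[1,\infty)$ the Banach space $Y=L^p(\R)$ is $\la$-invariant and satisfies Assumption~\ref{H5} (as $K\in L^{p'}(\R)$ for $p>1$ and $K\in L^\infty(\R)$ for $p=1$) and Assumption~\ref{H6} (as $V_0\Ss=\Ss\subseteq L^{p'}(\R)$). Proposition~\ref{prop6} then identifies $\Co{L^p(\R)}=\mathcal M^{L^p(\R)}=\set{f\in L^p(\R)\mid f\con K=f}$.

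The analytic heart is the identification $\mathcal M^{L^p(\R)}=B^p_\Omega$. For the easy inclusion, if $f\con K=f$ then Lemma~\ref{lemma:band} yields $\widehat f=\lim_\varepsilon \widehat f\,\widehat{g_\varepsilon}$ in $\mathbb S'(\widehat\R)$; since each $\widehat f\,\widehat{g_\varepsilon}$ is supported in $[-\omega-\varepsilon,\omega+\varepsilon]$, for any $\varphi\in\mathbb S(\widehat\R)$ with $\supp\varphi\cap\Omega=\emptyset$ the pairing $\langle\widehat f\,\widehat{g_\varepsilon},\varphi\rangle$ vanishes for all small $\varepsilon$, so $\langle\widehat f,\varphi\rangle=0$ and $f\in B^p_\Omega$. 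The hard part will be the reverse inclusion for $p>2$, where $\widehat f$ is only a distribution with compact support and $\widehat{g_\varepsilon}$ is not identically $1$ on $\Omega$, so one cannot pass directly to the limit. I plan to circumvent this by a dilation argument: for $\alpha>1$ define $K_\alpha(x)=\alpha^{-1}K(x/\alpha)$, so that $\widehat{K_\alpha}=\chi_{\alpha\Omega}$, and rerun the construction of Lemma~\ref{lemma:band} with $K_\alpha$ in place of $K$ to produce smooth compactly supported cutoffs $\widehat{g_{\alpha,\varepsilon}}$ that are identically $1$ on $[-\alpha\omega+\varepsilon,\alpha\omega-\varepsilon]$ and satisfy $\ff(f\con K_\alpha)=\lim_\varepsilon \widehat f\,\widehat{g_{\alpha,\varepsilon}}$ in $\mathbb S'(\widehat\R)$. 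For $0<\varepsilon<(\alpha-1)\omega$ the cutoff $\widehat{g_{\alpha,\varepsilon}}$ is identically $1$ on an open neighbourhood of the compact set $\supp\widehat f\subseteq\Omega$, so $\widehat f\,\widehat{g_{\alpha,\varepsilon}}=\widehat f$ as distributions and passing to the limit gives $f\con K_\alpha=f$. Finally $K_\alpha\to K$ in every $L^q(\R)$ with $q>1$ by continuity of dilations, and Young's inequality with $q\in(1,p')$ forces $f\con K_\alpha\to f\con K$ in some $L^r(\R)$, whence $f\con K=f$.

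With $\mathcal M^{L^p(\R)}=B^p_\Omega$ established, the remaining claims follow from item~\ref{sotto}) of Theorem~\ref{intersections}, which realises $V_e$ as a bijection from $\Ss'$ onto $\mathcal M^\U=\operatorname{span}\bigcup_{p\in I}\mathcal M^{L^p(\R)}$. Substituting $\mathcal M^{L^p(\R)}=B^p_\Omega$ and invoking the nesting $B^p_\Omega\subseteq B^q_\Omega$ for $p\leq q$ noted in the text, the union $\bigcup_{p\in I}B^p_\Omega$ is already a vector space and hence coincides with its span. Identifying each $T\in\Ss'$ with $V_eT\in\bigcup_{p\in I}B^p_\Omega\subseteq\U$ then yields $\Ss'=\bigcup_{p\in I}B^p_\Omega$ and displays $V_e$ as the canonical inclusion into $\U$.
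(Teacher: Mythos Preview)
Your overall architecture matches the paper's: identify $\Ss$ directly from $V_2$ being the inclusion, invoke Theorem~\ref{intersections} for the identification $\Ss'\simeq\mathcal M^\U$ via $V_e$, and reduce everything to proving $\mathcal M^{L^p(\R)}=B^p_\Omega$. The inclusion $\mathcal M^p\subseteq B^p_\Omega$ via Lemma~\ref{lemma:band} is also the paper's argument.

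The genuine difference is in the reverse inclusion $B^p_\Omega\subseteq\mathcal M^p$. The paper does not dilate; instead it invokes the Plancherel--P\'olya integral representation (formula~IV$'$ in \cite{plpo37}) to write any $f\in B^p_\Omega$ as $f(x)=(1-x)\psi(\omega)\ff^{-1}\chi_\Omega(x)+x\,\ff^{-1}(\psi\chi_\Omega)(x)$ with $\psi$ continuous and $\Omega$-periodic, then computes $\langle\ff(f\con K),\varphi\rangle$ explicitly and shows that the boundary contribution vanishes precisely because $\psi(\omega)=\psi(-\omega)$. Your dilation trick---replace $K$ by $K_\alpha$ whose Fourier support strictly contains $\Omega$, so that the approximants $\widehat{g_{\alpha,\varepsilon}}$ are identically~$1$ on a neighbourhood of $\supp\widehat f$, giving $f\con K_\alpha=f$ directly, then let $\alpha\to1$ via Young---is a different and more self-contained route: it avoids the external citation to \cite{plpo37} and works uniformly in $p\in I$ without the delicate boundary analysis. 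The paper's approach, on the other hand, gives an explicit structural formula for $\widehat f$ that may be of independent interest.

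One slip to fix: with the convention $\ff g(\xi)=\int g(x)e^{-2\pi i x\xi}\,dx$, the dilation $K_\alpha(x)=\alpha^{-1}K(x/\alpha)$ has Fourier transform $\chi_{\Omega/\alpha}$, not $\chi_{\alpha\Omega}$. You want $K_\alpha(x)=\alpha K(\alpha x)$, which gives $\widehat{K_\alpha}=\chi_{\alpha\Omega}$ and for which $K_\alpha\to K$ in every $L^q(\R)$, $q>1$, still holds; the rest of your argument is unaffected.
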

 \begin{proof}
  Since $V_2$ is the inclusion by item \ref{item:band-voice}) of Proposition \ref{prop:band}, we have
  $$ \Ss = \{ v \in B^2_\Omega : v \in L^p(\R) \ \forall p > 1 \} = \bigcap_{{p\in I}} B^p_\Omega . $$
In order to describe  $\Ss'$,  we first observe that 
\[ 
\bigcup_{{p\in I}}\mathcal{M}^p=\operatorname{span}
\bigcup_{{p\in I}}\mathcal{M}^p=\mathcal M^{\U}.
\]
Indeed, since $w=1$, by item~\eqref{eq:38}  $\mathcal{M}^p\subset L^\infty(G)$, so
that $\mathcal{M}^p\subset \mathcal{M}^q$ whenever $p\leq q$. 
Furthermore, item~\ref{sotto}) of Theorem~\ref{intersections} implies
that $V_e=\tr{\,V_0}$ establishes a linear isomorphism
  from $ \mathcal{M}^{\U} = \bigcup_{{p\in I}}\mathcal{M}^p $ to $\Ss'$.
  In particular, $\Ss'$ is the range $\tr{\,V_0}\mathcal{M}^{\U}$.
 
Since  $ V_0 : \Ss \hookrightarrow \T $ is the inclusion, 
the transpose map $ \tr{\,V_0} : \T' \to \Ss' $ is simply the restriction on the subspace $\Ss$.
  Therefore, we can explicitly represent $\Ss'$ as the space of anti-linear functionals
\[
 v \in \Ss \longmapsto \int_{\R} \Phi(x) \overline{v(x)} dx,
  \qquad \Phi \in \bigcup_{{p\in I}}\mathcal{M}^p . 
  \]
  The two spaces are thus canonically identified and, with this identification,
  $V_e$ is the inclusion $ \Ss' \hookrightarrow \U $.
    
  We next prove that $ \mathcal{M}^p = B^p_\Omega $ for all $p\in I$.
  Let $ f \in \mathcal{M}^p $ and  $\varphi$ be a smooth function with compact support  contained in 
  $\Omega^c $.
  Then, by Lemma \ref{lemma:band}, we have
  $$ \langle \widehat{f} , \varphi \rangle
  = \lim_{\varepsilon \rightarrow 0} \int \widehat{f}(\xi) \widehat{g_\varepsilon}(\xi) \varphi(\xi) d\xi , $$
  for some $\widehat{g_\varepsilon}$ with $ \supp{\widehat{g_\varepsilon}} \subseteq \Omega + [-\varepsilon,\varepsilon] $.
  Since $ \Omega + [-\varepsilon,\varepsilon] \cap \supp{\varphi} = \emptyset $ for $\varepsilon$ small enough, the limit is zero.
  This means that $ \supp{\widehat{f}} \subseteq \Omega $, that is $ f \in B^p_\Omega $.
  
  Conversely, let $ f \in B^p_\Omega $.
  We shall prove that $ \ff f = \ff(f \con K) $, whence $ f = f \con K $ and $ f \in \mathcal{M}^p $.
  Thanks to formula~IV' at page~111 in \cite{plpo37}, there exists a continuous function $\psi$, periodic on 
  $\Omega$ such that
  \[
   f(x) = \int_\Omega [(1-x)\psi(\omega) + x\psi(\xi)] \e^{2\pi ix\xi} d\xi.
   \]
Hence
  $$ f(x) = (1-x)\psi(\omega)\ff^{-1}\chi_\Omega(x) + x\ff^{-1}(\psi\chi_\Omega)(x) , $$
  so that, in $\mathbb SÕ(\R)$,
  $$ \ff f = \psi(\omega)(1 - \frac{i}{2\pi}\partial)\chi_\Omega + \frac{i}{2\pi}\partial(\psi\chi_\Omega) . $$
  This tempered distribution acts on any   function
  $\varphi\in\mathbb S(\R)$ by
  \begin{align}
   \langle \ff f , \varphi \rangle
   &= \psi(\omega) \langle \chi_\Omega , (1 + \frac{i}{2\pi}\partial)(\varphi) \rangle
    - \langle \psi\chi_\Omega , \frac{i}{2\pi} \partial\varphi \rangle \notag \\
   &= \int_\Omega \left[ \psi(\omega) ( 1 + \frac{i}{2\pi}\partial )\varphi(\xi)
    - \psi(\xi) \frac{i}{2\pi} \partial\varphi(\xi) \right] d\xi . \label{eq:temp}
  \end{align}
  On the other hand, we know from Lemma \ref{lemma:band} that
  $$ \ff(f \con K) = \lim_{\varepsilon \rightarrow 0} \ff(f) \widehat{g_\varepsilon}, $$
  where the  limit is  taken in ${\mathbb S}'(\R)$.
  Compute now
  \begin{align*}
   \langle \ff(f) \widehat{g_\varepsilon} , \varphi \rangle &= \langle \ff f , \widehat{g_\varepsilon} \varphi \rangle \\
   &= \psi(\omega) \langle \chi_\Omega , (1 + \frac{i}{2\pi}\partial)( \widehat{g_\varepsilon}\varphi) \rangle
    - \langle \psi\chi_\Omega , \frac{i}{2\pi}\partial(\widehat{g_\varepsilon}\varphi) \rangle \\
   &= \int_\Omega \left[ \psi(\omega) ( 1 + \frac{i}{2\pi}\partial )\varphi(\xi)
    - \psi(\xi) \frac{i}{2\pi} \partial\varphi(\xi) \right] \widehat{g_\varepsilon} d\xi \\
   &+ \frac{i}{2\pi} \int_\Omega (\psi(\omega) - \psi(\xi)) \varphi(\xi) \partial \widehat{g_\varepsilon}(\xi) d\xi .
  \end{align*}
By ii) of Lemma~\ref{lemma:band},   $ \widehat{g_\varepsilon} \rightarrow \chi_\Omega $ pointwise, then the  limit of the first integral is precisely \eqref{eq:temp}.
  It remains to verify that the last integral tends to zero.
  Notice that it vanishes on $ [-\omega+\varepsilon,\omega-\varepsilon] $ because,
  by Lemma \ref{lemma:band}, $ \widehat{g_\varepsilon} = 1 $, hence $ \partial \widehat{g_\varepsilon} = 0 $.
  The rest of the integral is dominated by
  \begin{align*}
   &\left[ \sup_{-\omega\leqslant\xi\leqslant-\omega+\varepsilon}\abs{\psi(\omega)-\psi(\xi)}
   + \sup_{\omega-\varepsilon\leqslant\xi\leqslant\omega}\abs{\psi(\omega)-\psi(\xi)} \right] \nor{\varphi}_\infty
   \nor{\partial \widehat{g_\varepsilon}}_\infty \ \varepsilon \\
   \lesssim & \sup_{-\omega\leqslant\xi\leqslant-\omega+\varepsilon}\abs{\psi(\omega)-\psi(\xi)}
   + \sup_{\omega-\varepsilon\leqslant\xi\leqslant\omega}\abs{\psi(\omega)-\psi(\xi)} ,
  \end{align*}
  thanks to iii) of Lemma \ref{lemma:band}. But this tends to zero as $\varepsilon\to0$, because $ \psi(\omega) = \psi(-\omega) $.
  We have proven that $ \mathcal{M}^p = B^p_\Omega $ for all $p$.
  Thus we finally obtain
  \begin{equation*}
   \Ss' = \bigcup_{{p\in I}} B^p_\Omega \quad \mbox{and} \quad \Co{L^p(\R)} = B^p_\Omega . \qedhere  
  \end{equation*}
 \end{proof}

\vskip1truecm
\subsection{Shannon wavelet}\label{shannon}

We consider now the special case of a non integrable kernel for the
wavelet representation of the affine group on  $L^2(\R)$.   It is well known that this representation is
reproducing and admits admissible vectors whose kernel is integrable
\cite{grmorpa85,ag00,fegr88}.  The resulting
coorbit spaces are completely understood as homogeneous Besov spaces
\cite{feigro85,fegr88}.  However, there are
admissible vectors whose kernel is not integrable, as for example the
Shannon wavelet, which provides another example for which our
theory applies.

At first sight, this result might look surprising. Indeed, in
\cite{fegr88}, H. Feichtinger and K. Gr\"ochenig claim that any band limited
function whose Fourier transform has compact support bounded away from
zero leads to an integrable kernel. However, a careful inspection of
the proof reveals that the additional assumption that the Fourier
transform is continuous is implicitly needed.
  
Let $ G = \R \rtimes \R_+ $ be the connected component of the affine
group with  left Haar measure $ db\,da/a^2 $.  The wavelet representation $\pi$ acts on $ \hh = L^2(\R) $ by dilations and translations:
 $$ \pi(b,a)v(x) = a^{-1/2}v((x-b)/a) . $$
 The (real) Shannon wavelet  is defined as
 $$ \widehat{u}(\xi) = \chi_{[1/4,1/2]}(\abs{\xi}) = \chi_{[-1/2,1/2]}(\xi) - \chi_{[-1/4,1/4]}(\xi) , $$
 that is
 $$ u(x) = \sinc(\pi x) - \frac{1}{2}\sinc(\frac{\pi}{2}x) = \frac{1}{2}\sinc(\frac{\pi}{4}x)\cos(\frac{3}{4}\pi x).$$
 It is easily seen that $ u \notin L^1(\R) $, but $ u \in
   L^p(\R) $ for all $ p > 1 $. We now prove that the corresponding
   kernel has the same behavior. 
\begin{lem}
 The kernel $K=V_2u$ associated with the Shannon wavelet 
\[ u(x)=\frac{1}{2}\sinc(\frac{\pi}{4}x)\cos(\frac{3}{4}\pi x)\]
is in
 $L^p(G) $ for all $ p > 1 $, but it is not in $L^1(G)$.
 \end{lem}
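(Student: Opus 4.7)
The plan is to compute $K$ explicitly on the Fourier side, identify its $(b,a)$-support, and then separate the $L^1$ failure from the $L^p$ success by elementary means. Applying Plancherel to $K(b,a)=\scal{u}{\pi(b,a)u}_\hh$ together with $\ff(\pi(b,a)u)(\xi)=a^{1/2}\e^{-2\pi ib\xi}\widehat u(a\xi)$ would give
\[
K(b,a)=a^{1/2}\int_\R \widehat u(\xi)\widehat u(a\xi)\,\e^{2\pi ib\xi}\,d\xi=a^{1/2}\,\ff^{-1}\bigl(\widehat u\cdot\widehat u(a\cdot)\bigr)(b).
\]
Since $\widehat u=\chi_{\{1/4\leq|\xi|\leq1/2\}}$, the product $\widehat u(\xi)\widehat u(a\xi)$ is the characteristic function of a set $A_a\subset\R$ which is a symmetric union of at most two intervals, empty unless $a\in[1/2,2]$. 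Hence $K(\cdot,a)\equiv0$ outside this strip, and explicit integration yields, for $a\in[1,2]$,
\[
K(b,a)=\frac{a^{1/2}}{\pi b}\bigl[\sin(\pi b/a)-\sin(\pi b/2)\bigr],
\]
with a symmetric formula on $[1/2,1]$.

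The $L^1$ failure would then follow from the Riemann--Lebesgue lemma, which forces the Fourier transform of any element of $L^1(\R,db)$ to be continuous; but $\ff K(\cdot,a)=a^{1/2}\chi_{A_a}$ has jump discontinuities for every $a\in(1/2,2]$, so $K(\cdot,a)\notin L^1(\R,db)$ for each such $a$ and thus
\[
\nor{K}_{L^1(G)}=\int_{1/2}^{2}\!\!\int_\R|K(b,a)|\,db\,\frac{da}{a^2}=+\infty.
\]

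For $p>1$, the explicit expression above gives the pointwise bound $|K(b,a)|\leq a^{1/2}\min\bigl(|A_a|,\,2/(\pi|b|)\bigr)$. Splitting the $b$-integral at the crossover $|b|=2/(\pi|A_a|)$ produces a constant $C_p<\infty$ with $\int_\R|K(b,a)|^p\,db\leq C_p\,a^{p/2}\,|A_a|^{p-1}$. Since $|A_a|$ equals $(2-a)/(2a)$ or $(2a-1)/(2a)$ on the two halves of $[1/2,2]$, integrating $a^{p/2-2}|A_a|^{p-1}$ against $da/a^2$ over $[1/2,2]$ is elementary and finite: the factor $|A_a|^{p-1}$ vanishes at the endpoints $a=1/2,2$, and the exponent $p-1>0$ makes this harmless. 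The only mildly delicate point is the range $1<p<2$, where a direct appeal to Hausdorff--Young would only give $K(\cdot,a)\in L^{p'}(\R,db)$ with $p'\geq 2$; the pointwise min-bound above is precisely what lets one bypass this obstruction uniformly in $a$.
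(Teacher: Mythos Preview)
Your proof is correct. The $L^1$ failure via Riemann--Lebesgue is exactly the paper's argument: the Fourier transform in $b$ of $K(\cdot,a)$ is a nonzero characteristic function for each $a\in(1/2,2)$, hence discontinuous, so $K(\cdot,a)\notin L^1(\R)$ and Fubini finishes.

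For $L^p$ with $p>1$ the two proofs diverge. The paper exploits the convolution structure $K(b,a)=(u\con\pi(0,a)u)(b)$ and applies Young's inequality on $\R$: choosing $1/s+1/q=1/p+1$ gives $\nor{K(\cdot,a)}_p\leq\nor{u}_s\nor{\pi(0,a)u}_q$, and since $u\in L^r(\R)$ for every $r>1$ and $a$ ranges over the compact interval $[1/2,2]$, the integral $\int_{1/2}^2\nor{K(\cdot,a)}_p^p\,a^{-2}da$ is finite by continuity. Your route is instead a direct pointwise estimate from the explicit inverse Fourier transform of $\chi_{A_a}$, followed by the standard min-splitting of the $b$-integral. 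Both are short; the paper's version is slightly more conceptual (no explicit formula needed, and it would transfer to other band-limited wavelets with the same support structure), while yours is more self-contained and, as you note, sidesteps the Hausdorff--Young limitation for $1<p<2$ without invoking an auxiliary inequality. One minor remark: your final sentence about $|A_a|^{p-1}$ vanishing at the endpoints is true but unnecessary---the integrand is bounded on $[1/2,2]$ regardless, so there is nothing to worry about.
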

 \begin{proof}
   Since $u$ is real and even, the voice transform ${V_2}$ is
 \[
 {V_2} v(b,a) = \int v(x) a^{-1/2}u((x-b)/a) dx = \left(v \con \pi(0,a)u\right) (b).
 \]
The Shannon kernel is thus
 $$ K(b,a) = {V_2} u(b,a) = (u \con \pi(0,a)u) (b) . $$
Since $u$ is admissible, $ K \in L^2(G) $ and, by Fubini
   theorem, $u \con\pi(0,a)u \in L^2(\R) $ for almost every $ a > 0
   $. Then,  by the convolution theorem for $L^2$-functions
 $$ \ff(u \con\pi(0,a)u)(\beta) = \widehat{u}(\beta)\widehat{\pi(0,a)u}(\beta)
 = a^{1/2} \chi_{[1/4,1/2] \cap [1/4a,1/2a]}(\abs{\beta}) . $$ It
 follows that:
 \begin{enumerate}[a)]
 \item \label{enum:Shannon-0} $ K(\cdot,a) \neq 0 $ only if $ a \in
   (1/2,2) $;
 \item \label{enum:Shannon-F} if $ a \in (1/2,2) $ the Fourier
   transform of $ K(\cdot,a) $ is a non-zero characteristic function.
 \end{enumerate}
 By~\ref{enum:Shannon-F}), for almost all $ a \in (1/2,2) $ the
   function $ u \con \pi(0,a)u $ cannot be in $L^1(\R)$, otherwise
   its Fourier transform would be continuous. Hence $ K \notin L^1(G)
 $.  Let us show that  $K \in L^r(G)$  for all $r > 1$. From
 \ref{enum:Shannon-0}) we have
\[
 \int_{\R_+} \int_\R \abs{K(b,a)}^r db \frac{da}{a^2} 
 = \int_{1/2}^2 \int_\R \abs{u \con\pi(0,a)u (b)}^r db \frac{da}{a^2}
 = \int_{1/2}^2 \nor{u \con\pi(0,a)u}_r^r \frac{da}{a^2} . 
 \]
 Recall that $ u \in L^p(\R) $ for all $ p > 1 $, so that the same holds for $\pi(0,a)u$ .  
 By Young's inequality \eqref{eq:5} for the unimodular group $G=\R$, we can estimate
 the inner norm and obtain
 $$ \nor{K}_r^r \leqslant \nor{u}_p^r \int_{1/2}^2 \frac{\nor{\pi(0,a)u}_q^r}{a^2} da , $$
 where $p$ and $q$ are such that $ 1/p + 1/q = 1/r + 1 $.  This
 integral is finite, because the function $ a \mapsto
 \nor{\pi(0,a)u}_q^r/a^2 $ is continuous and the interval $[1/2,2]$
 is compact.
\end{proof}
A Shannon wavelet coorbit theory can thus be implemented taking  voices in the target space 
 $ \T = \bigcap_{{p\in I}} L^p(G) $, but not in $L^1(G)$.

\subsection{Schr\"odingerlets}\label{SCR} 
In this section, we illustrate the example that has motivated the search for a full coorbit theory in which one encounters reproducing kernels that do enjoy nice integrability properties but are not necessarily in $L^1(G)$. The main feature of this example is that once the admissibility conditions are worked out, it is relatively easy to exhibit  kernels in $\bigcap_{{p\in I}}L^p(G)$ but hard  to find a kernel in $L^1(G)$. This example has shown up in the classification of reproducing triangular subgroups of 
$Sp(2,\R)$, which was recently achieved in  \cite{ABDD13, ADDM13}.

We shall be concerned with the three-dimensional group generated by rotations, dilations and flows  of two-dimensional signals, in a sense to be made precise below.
The group acts on functions via radial affine transformations, and the associated voice transform can thus be seen as a Fourier series of one-dimensional wavelets.
This representation is highly reducible, and reproducing.

The group $G$ is the  direct product of the (connected component of) affine group of the line with the unit circle
\[
 G = (\R \rtimes \R_+) \times S^1
\] 
and its elements are parametrized by $(b,a,\varphi)$ with $b\in\R$, $a>0$ and $\varphi\in[0,2\pi)$. A  left Haar measure is
\[ 
 dx =\frac{db\,da}{ a^{2}}\, \frac{d\varphi}{2\pi}.
\]
Notice that $G$ is not unimodular and has  modular function  $
\Delta(b,a,\varphi)=a^{-1} $.

The representation $\pi$ that  we are going to define acts on $ L^2(\R\times S^1)$,  
endowed with the tensor product of the Lebesgue
  measure and the normalized Haar measure on $S^1$. The action is
\begin{equation} \label{eq:schlets_repr}
 \pi(b,a,\varphi)v(x,\vartheta) = a^{-1/2} \ v((x-b)/a,\vartheta-\varphi), \qquad v \in  L^2(\R\times S^1).
\end{equation}
Since $L^2(\R\times S^1)=  L^2(\R) \otimes L^2(S^1) $,  $\pi$
is simply the tensor product $ \pi= w \otimes \lambda $ where $w$ is
the wavelet representation of the affine group
\begin{equation}
 w(b,a) g(x) = a^{-1/2} \ g((x-b)/a), \qquad g\in L^2(\R),
\end{equation}
and $\la$ is the  left regular representation of $S^1$ on $L^2(S^1)$, namely
\begin{equation}
 \lambda(\varphi) h(\vartheta) = h(\vartheta-\varphi), \qquad h \in L^2(S^1).
\end{equation}
In what follows, we denote by $\ff_x$ the unitary Fourier transform
from $L^2(\R)$ onto $L^2(\widehat{\R})$, which is also regarded as a
unitary map from $L^2(\R\times S^1)$ onto $L^2(\widehat{\R})\otimes
L^2(S^1)$. Furthermore, we denote by $\ff_\vartheta$ the unitary Fourier transform
from $L^2(S^1)$ onto $\ell^2(\Z)$, which is also regarded as a
unitary map from $L^2(\R\times S^1)$ onto $L^2(\R)\otimes
\ell^2(\Z)$.  Explicitly, if $v \in C_c^\infty(\R\times S^1)$, then
\begin{subequations}
  \begin{align}
    \ff_x v(\rho,\vartheta) & = \int_{\R} v(x,\vartheta) \e^{-2 \pi i \rho
      x} dx, \label{partialFx}\\
    \ff_\vartheta v(x,n) & =\int_{\R} v(x,\vartheta)
    \e^{-in\vartheta} \frac{d\vartheta}{2\pi}=\int_{\R} v(x,\vartheta)
    \overline{e_n(\vartheta)}\frac{d\vartheta}{2\pi},
    \label{partialF}
  \end{align}
where $e_n(\vartheta)= \e^{in\vartheta}$.
\end{subequations}
The partial Fourier transform  $\ff_\vartheta v$ of any  $v\in L^2(\R\times S^1)$
  can be identified with  the
sequence of functions $(v_n)_{n\in\Z}$ in $L^2(\R)$, where $v_n=\ff_\vartheta
v(\cdot,n) $. Hence
\begin{equation}
v= \sum_{n\in\Z} v_n \otimes e_n, \qquad 
\nor{v}^2_{L^2(\R\times  S^1)}= \sum_{n\in\Z} \nor{v_n}^2_{L^2(\R)}.\label{eq:60}
\end{equation}
To simplify the computations,  we restrict the
representation $\pi$ to the closed subspace
$\hh=\ff_x^{-1}L^2(\widehat{\R}_+)\otimes L^2(S^1)$,
so that the wavelet representation $w$  acts irreducibly on $
\ff_x^{-1}L^2(\widehat{\R}_+) $.
Given a vector $u\in \hh$, we denote
by $V$ the voice transform corresponding to the representation $\pi$
of $G$ and the analyzing vector $u$, namely
\[ Vv(b,a,\vartheta)= \scal{v}{\pi(b,a,\vartheta)u}_\hh \qquad v\in\hh,\]
and by $V^{\mathrm{w}}_n$ the voice transform corresponding to the representation
$w$ of the affine group and the analyzing vector $u_n$, {\em i.e.}
\[ 
V^{\mathrm{w}}_ng(b,a)= \scal{g}{w(b,a)u_n}_{L^2(\R)}.
\]
We use the unitary operator $\ff_x:\hh\to L^2(\widehat{\R}_+\!\times\!S^1)$ to obtain an intermediate equivalent version of $\pi$, denoted $\ff_x (\pi)$,  acting on 
$L^2(\widehat{\R}_+\!\times\!S^1)$. This is defined via the intertwining 
$\ff_x\circ\pi(g)= \ff_x (\pi)(g)\circ\ff_x$ for every $g\in G$. The analytic expression of $\ff_x (\pi)$ is 
immediately computed to be 
\[
\ff_x(\pi)(b,a,\varphi)v(\xi,\vartheta) = a^{1/2} \e^{-2 \pi i b \xi}  v(a\xi,\vartheta-\varphi),
\]
whereas from the structural point of view it may be written as
\[
 \ff_x (\pi) = \widehat{w} \otimes \lambda, 
\]
where  $ \widehat{w}(b,a)=\ff_x\circ w(b,a)\circ\ff_x^{-1}$.

The group $G$ can be realized as the triangular subgroup of  $Sp(2,\R)$ consisting of the matrices
 $$ \begin{bmatrix}
      \ a^{-1/2} R & 0 \\
      b a^{-1/2} R & a^{1/2} R
    \end{bmatrix}, \qquad b \in \R , a>0 , R \in SO(2).
 $$
Thus, $G$ may also be seen as the semidirect product  $\R \rtimes (\R_+\!\times\!SO(2))$,
 where the homogeneous factor $\R_+\!\times\!SO(2)$ acts on the normal subgroup $\R$ by isotropic dilations. 
We shall not distinguish between $S^1$ and $SO(2)$ and write rotations as 
 \[
 R_\varphi=\begin{bmatrix}\cos\varphi&-\sin\varphi\\\sin\varphi&\cos\varphi\end{bmatrix},
 \qquad\varphi\in[0,2\pi).
 \]
We  show below that $\pi$ is equivalent to the metaplectic representation $\mu$ as restricted to the above group,  defined in the frequency domain by
 $$
 \mu(b,a,\varphi)v(\xi) = a^{1/2} e^{-2 \pi i b \abs{\xi}^2} v(a^{1/2}R_{-\varphi}\xi),
  \qquad v \in L^2(\widehat{\R^2}).
$$
The space-domain version of this representation explains the reason of the name {\it Schr\"odingerlets}.
Denote by $\widehat\mu$ the representation obtained by conjugating $\mu$ with the Fourier transform, namely
\[
\widehat\mu(g)f=\ff^{-1}\circ\mu(g)\circ\ff.
\]
We now interpret $b\in\R$ as a time parameter and look at the evolution  {\it flow} of 
$f\in  L^1(\R^2)\cap L^2(\R^2)$
\[
(b,x)\mapsto\widehat\mu_bf(x)=\widehat\mu(b,1,0)f(x)=\int_{\widehat{\R^2}}\hat f(\xi)e^{-2\pi i b|\xi|^2}e^{2\pi i x\cdot\xi}\,d\xi.
\]
It is then straightforward to verify that the flow $\widehat\mu_bf$ satisfies the Schr\"odinger equation
\[
\Bigl(2\pi i\frac{\partial}{\partial b}+\Delta\Bigr)\widehat\mu_bf(x)=0,
\]
where $\Delta$ is the spacial Laplacian
\[
\Delta=\frac{\partial^2}{\partial x_1^2}+\frac{\partial^2}{\partial x_2^2}.
\]
It is in this sense that the group is generated by (dilations, rotations and)  flows.

We now prove the equivalence. The unitary map $\Psi : L^2(\widehat{\R^2}) \to L^2(\widehat{\R}_+\!\times\!S^1)$, defined by
\[
 \Psi v(\xi,\vartheta) = {\pi^{1/2}} \ v(\sqrt{\xi}\cos\vartheta,\sqrt{\xi}\sin\vartheta),
 \]
 intertwines $\mu$ with $\ff_x(\pi)$ because, for $v \in L^2(\widehat{\R^2})$,  we have on the one hand
 \begin{align*}
\Psi\left(\mu(b,a,\varphi)v\right)(\xi,\vartheta)
&=\Psi\left(a^{1/2} e^{-2 \pi i b \abs{\cdot}^2} v(a^{1/2}R_{-\varphi}(\cdot))\right)(\xi,\vartheta)     \\
&={\pi^{1/2}} a^{1/2}e^{-2 \pi i b \abs{(\sqrt{\xi}\cos\vartheta,\sqrt{\xi}\sin\vartheta)}^2}
v\left(a^{1/2}R_{-\varphi}(\sqrt{\xi}\cos\vartheta,\sqrt{\xi}\sin\vartheta)\right)     \\
&={\pi^{1/2}} a^{1/2}e^{-2 \pi i b\xi}
v\left(\sqrt{a}(\sqrt{\xi}\cos(\vartheta-\varphi),\sqrt{\xi}\sin(\vartheta-\varphi))\right)
\end{align*}
and on the other hand
\begin{align*}
\ff_x(\pi)(b,a,\varphi)\left(\Psi v\right)(\xi,\vartheta)&= a^{1/2} \e^{-2 \pi i b \xi} (\Psi v)(a\xi,\vartheta-\varphi)     \\
&= a^{1/2} \e^{-2 \pi i b \xi}{\pi^{1/2}}  v(\sqrt{a\xi}\cos(\vartheta-\varphi),\sqrt{a\xi}\sin(\vartheta-\varphi)),
\end{align*}
as claimed. In conclusion, since $\pi$ and $\ff_x(\pi)$ are equivalent, so are $\pi$ and $\mu$.

We point out that $\Psi$ is simply the change from rectangular to polar-like  coordinates $(\sqrt{\xi} , \vartheta)$, together with the appropriate $L^2$-normalization.

Since  the wavelet representation is irreducible, while $\lambda$
completely reduces to ${\oplus_{n\in\Z} e_{-n}} $, {where each function $e_n$ is
regarded as a character of $S^1$}, then
\[
\pi = \bigoplus_{n \in \Z} w \otimes {e_{-n}},
\]
which  expresses $\pi$ as a  sum of irreducibles. This allows  us to view the voice transform of $\pi$ as a Fourier series of one-dimensional wavelet transforms, as clarified in the next proposition.
\begin{prop} Let $u=\sum_{n\in \Z} u_n\otimes e_n\in\hh$.
 The voice transform $V$ associated with $\pi$ and $u$ admits the trigonometric expansion
 \begin{equation} \label{eq:schlets-series}
 V v(b,a,\varphi) = \sum_{n \in \Z}{V^{\mathrm{w}}_n}v_n(b,a) \e^{i n \varphi},
 \end{equation}
where  the series converges pointwise for all $(b,a,\varphi)\in
  G$ and where
 \begin{equation} \label{eq:schlets-coeff}
 V^{\mathrm{w}}_n v_n (b,a)= \int_{S^1} Vv(b,a,\vartheta) e^{-in\vartheta} \frac{d\vartheta}{2\pi}.
 \end{equation}
\end{prop}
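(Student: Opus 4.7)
The plan is to exploit the tensor product structure $\pi = w \otimes \lambda$ and expand both $u$ and $v$ as in~\eqref{eq:60}. Write $v = \sum_n v_n \otimes e_n$ and $u = \sum_n u_n \otimes e_n$ in $\hh$. Since $\lambda(\varphi) e_n(\vartheta) = e_n(\vartheta-\varphi) = \e^{-in\varphi}e_n(\vartheta)$, applying $\pi(b,a,\varphi) = w(b,a)\otimes\lambda(\varphi)$ termwise yields
\[
\pi(b,a,\varphi)u \;=\; \sum_{n\in\Z} \e^{-in\varphi}\,\bigl(w(b,a)u_n\bigr)\otimes e_n,
\]
with convergence in $\hh$ (since $\|w(b,a)u_n\|_{L^2(\R)}=\|u_n\|_{L^2(\R)}$ and $u\in\hh$).

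Next I would take the inner product with $v$. Using the orthonormality of $\{e_n\}_{n\in\Z}$ in $L^2(S^1)$ and the definition of $V^{\mathrm{w}}_n$, I get termwise
\[
\scal{v}{\pi(b,a,\varphi)u}_\hh \;=\; \sum_{n\in\Z} \e^{in\varphi}\,\scal{v_n}{w(b,a)u_n}_{L^2(\R)} \;=\; \sum_{n\in\Z} V^{\mathrm{w}}_n v_n(b,a)\,\e^{in\varphi},
\]
which is exactly~\eqref{eq:schlets-series}. To justify pointwise convergence at every $(b,a,\varphi)\in G$, I would observe that by Cauchy--Schwarz in $L^2(\R)$ and unitarity of $w(b,a)$,
\[
\bigl|V^{\mathrm{w}}_n v_n(b,a)\,\e^{in\varphi}\bigr| \;\leq\; \nor{v_n}_{L^2(\R)}\nor{u_n}_{L^2(\R)},
\]
and then Cauchy--Schwarz in $\ell^2(\Z)$ gives $\sum_n \nor{v_n}\nor{u_n}\leq \nor{v}_\hh\nor{u}_\hh<+\infty$. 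Hence the series converges absolutely and uniformly in $\varphi$, which moreover ensures continuity of $\varphi\mapsto Vv(b,a,\varphi)$.

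Finally, \eqref{eq:schlets-coeff} follows by integrating \eqref{eq:schlets-series} against $\e^{-in\vartheta}\,d\vartheta/(2\pi)$: the uniform convergence in $\vartheta$ just established permits swapping sum and integral, and the orthonormality $\int_{S^1}\e^{i(m-n)\vartheta}\,d\vartheta/(2\pi)=\delta_{mn}$ isolates the $n$--th coefficient. There is no real obstacle here; the argument is essentially bookkeeping with the decomposition $\pi=w\otimes\lambda$, the only subtle point being to record the correct justification (Cauchy--Schwarz in both $L^2(\R)$ and $\ell^2(\Z)$) for the pointwise absolute convergence of the Fourier series.
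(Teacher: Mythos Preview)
Your proof is correct and follows essentially the same route as the paper: both exploit the tensor decomposition $\pi = w\otimes\lambda$ together with the diagonalization $\lambda(\varphi)e_n = \e^{-in\varphi}e_n$ to reduce the inner product to the series $\sum_n V^{\mathrm{w}}_n v_n(b,a)\,\e^{in\varphi}$. The only minor difference is in how \eqref{eq:schlets-coeff} is justified: the paper notes that $\vartheta\mapsto Vv(b,a,\vartheta)$ is continuous and invokes de la Vall\'ee--Poussin, whereas you swap sum and integral using the uniform convergence already established by your double Cauchy--Schwarz estimate --- your argument is a bit more self-contained here.
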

\begin{proof} 
Since  $\ff_\vartheta$ is a unitary map,  for all $(b,a,\varphi)\in G$ 
\begin{align*}
\langle v , \pi(b,a,\varphi)u \rangle_\hh & = \langle \ff_\vartheta v ,
\ff_\vartheta\left( w(b,a) \otimes \la(\vartheta)\right)u
\rangle_{L^2(\R)\otimes \ell^2(\Z)} \\
& =   \langle \ff_\vartheta v ,
\left( w(b,a) \otimes \ff_\vartheta\la(\varphi) \ff_\vartheta^{-1}\right)
\ff_\vartheta u \rangle_{L^2(\R)\otimes \ell^2(\Z)} \\
&= \sum_{n\in \Z} \scal{v_n}{ w(b,a)u_n}_{L^2(\R)}
\overline{e_{-n}(\varphi)} \\
& =  \sum_{n\in \Z}{V^{\mathrm{w}}_n}v_n(b,a) e^{in\varphi},
\end{align*}
where the third line is due to the fact that the action of $\ff_\vartheta\la(\vartheta) \ff_\vartheta^{-1}$ on
$\ell^2(\Z)$ is the multiplication operator by the sequence $( e_{-n}(\varphi))_n$.
For fixed $(b,a)\in \R\rtimes\R_+$,  the function $\vartheta\mapsto
Vv(b,a,\vartheta)$ is continuous and, hence, integrable on $S^1$. Therefore, by de la
Vall\'ee--Poussin theorem, (see  iii) of Theorem~11.3 in
\cite{zyg2002}),  we obtain~\eqref{eq:schlets-coeff}.
\end{proof}
The Fourier expansion \eqref{eq:schlets-series} shows how to construct
admissible vectors as series of wavelets. This result has been
originally  obtained in \cite{ADDM13} and in general setting in \cite{fuhr2005}. For the reader's convenience, we give here a more direct proof. 
\begin{prop}
The representation $\pi$ is reproducing and a vector $u=\sum_{n\in\Z}u_n\otimes e_n\in\hh$ is admissible for $\pi$ if and only if  for all $ n \in \Z $
 \begin{equation} \label{eq:nCalderon}
  \int_{\R_+} |{\ff_xu_n}(\xi)|^2 \frac{d\xi}{\xi} = 1.
 \end{equation}
 Furthermore, given a sequence ${(u_n)_{n\in\Z}}$ with  $u_n\in \ff^{-1}L^2(\widehat{\R}_+) $, we have
 \begin{equation} \label{eq:H-convergence}
  \sum_{n\in\Z}{ u_n \otimes \, e_n}\in \hh \qquad 
  \iff \qquad \sum_{n\in\Z} \int_{\R_+} \abs{{\ff_xu_n}(\xi)}^2 d\xi < +\infty.
 \end{equation}
If $u$ is an admissible vector, the voice
  transform from $\hh$ into $L^2(G)=L^2(\R\rtimes\R_+)\otimes L^2(S^1)$ is
  \begin{equation}
    \label{eq:61}
    V_2v= \sum_{n\in\Z} V^{\text{w}}_n v_n\otimes e_n\qquad v=\sum_{n\in Z}
    v_n\otimes e_n\in\hh
  \end{equation}
and the series~\eqref{eq:schlets-series} converges also in
$L^2(G)$.
\end{prop}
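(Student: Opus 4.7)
My plan is to reduce every assertion to two standard facts: Plancherel for $\ff_x$ on $\ff_x^{-1}L^2(\widehat{\R}_+)$ and the one-dimensional wavelet Calder\'on admissibility condition for $w$ on the same space. The trigonometric expansion \eqref{eq:schlets-series} acts as a bridge between the voice transform on $G$ and the wavelet voice transforms $V^{\mathrm{w}}_n$, and Parseval in $\vartheta$ does all the work.

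I would first dispatch \eqref{eq:H-convergence}. Since $\hh = \ff_x^{-1}L^2(\widehat{\R}_+)\otimes L^2(S^1)$ and $\{e_n\}$ is an orthonormal basis of $L^2(S^1)$, the partial sums $\sum_{|n|\le N} u_n\otimes e_n$ are orthogonal in $\hh$ with squared norms $\nor{u_n}_{L^2(\R)}^2=\int_{\R_+}|\ff_xu_n(\xi)|^2d\xi$; hence convergence in $\hh$ is equivalent to summability of these quantities, and in that case $\nor{u}_\hh^2=\sum_n\int_{\R_+}|\ff_xu_n(\xi)|^2 d\xi$.

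Next I tackle the admissibility characterization. Take any $v=\sum_n v_n\otimes e_n\in\hh$ and apply Parseval in the $\vartheta$ variable to \eqref{eq:schlets-series}: for each fixed $(b,a)$ the function $\varphi\mapsto Vv(b,a,\varphi)$ has Fourier coefficients $V^{\mathrm{w}}_nv_n(b,a)$ by \eqref{eq:schlets-coeff}, so
\[
\int_{S^1}|Vv(b,a,\varphi)|^2\frac{d\varphi}{2\pi}=\sum_{n\in\Z}|V^{\mathrm{w}}_nv_n(b,a)|^2.
\]
Integrating over $\R\rtimes\R_+$ and using Tonelli (everything is nonnegative) gives $\int_G|Vv|^2dx=\sum_n\int|V^{\mathrm{w}}_nv_n(b,a)|^2 db\,da/a^2$. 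Since each $u_n,v_n$ lives in $\ff_x^{-1}L^2(\widehat{\R}_+)$, the classical one-dimensional wavelet Calder\'on formula yields
\[
\int_{\R\rtimes\R_+}|V^{\mathrm{w}}_nv_n(b,a)|^2\frac{db\,da}{a^2}=C_{u_n}\nor{v_n}_{L^2(\R)}^2,\qquad C_{u_n}=\int_{\R_+}|\ff_xu_n(\xi)|^2\frac{d\xi}{\xi}.
\]
Combining, $\nor{Vv}_{L^2(G)}^2=\sum_n C_{u_n}\nor{v_n}^2$, whereas $\nor{v}_\hh^2=\sum_n\nor{v_n}^2$. Testing with $v=v_n\otimes e_n$ for arbitrary nonzero $v_n$ forces $C_{u_n}=1$ for every $n$, which is \eqref{eq:nCalderon}; conversely that condition gives the isometry for all $v$. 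Existence of admissible vectors (take e.g.\ $u=u_0\otimes e_0$ with $u_0$ satisfying \eqref{eq:nCalderon}) shows $\pi$ is reproducing.

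Finally, \eqref{eq:61} and the $L^2(G)$ convergence of \eqref{eq:schlets-series} come for free from what is above. When $u$ is admissible, $V_2v\in L^2(G)=L^2(\R\rtimes\R_+)\otimes L^2(S^1)$; its decomposition in the orthonormal basis $\{1\otimes e_n\}$ of the second factor has coefficients $\ff_\vartheta(V_2v)(b,a,n)$, which by the pointwise identity \eqref{eq:schlets-coeff} coincide a.e.\ with $V^{\mathrm{w}}_nv_n(b,a)$, giving \eqref{eq:61} and simultaneously the $L^2(G)$ convergence of the series in \eqref{eq:schlets-series}. The only non-bookkeeping step is the invocation of the one-dimensional Calder\'on formula, which is where I expect readers might want a reference rather than a proof, so I would cite the standard wavelet literature there and leave the rest as a routine Fubini/Parseval chase.
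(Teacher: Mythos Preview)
Your argument for the admissibility characterization, for \eqref{eq:H-convergence}, and for \eqref{eq:61} is correct and matches the paper's approach closely: Parseval in $\vartheta$, Tonelli over $\R\rtimes\R_+$, and the one-dimensional Calder\'on formula are exactly the ingredients used.

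There is, however, a genuine gap in your proof that $\pi$ is reproducing. Your proposed admissible vector $u=u_0\otimes e_0$ has $u_n=0$ for every $n\neq 0$, and hence $\int_{\R_+}|\ff_x u_n(\xi)|^2\,d\xi/\xi=0\neq 1$ for those $n$. By the very characterization you just established, this $u$ is \emph{not} admissible: indeed, for $v=v_m\otimes e_m$ with $m\neq 0$ one gets $V v\equiv 0$ while $v\neq 0$. The difficulty is real: an admissible vector must have \emph{all} components $u_n$ nonzero, each with Calder\'on integral equal to $1$, yet $\sum_n\|u_n\|_{L^2}^2<\infty$. The paper resolves this by exploiting the scale invariance of the Calder\'on integral: starting from a single wavelet $u_0$, set $u_n(x)=a_n u_0(a_n x)$ so that $\ff_x u_n(\xi)=\ff_x u_0(\xi/a_n)$; then each $u_n$ still satisfies \eqref{eq:nCalderon}, while $\|u_n\|_{L^2}^2=a_n\|u_0\|_{L^2}^2$, and choosing any positive summable sequence $(a_n)$ yields $u\in\hh$. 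You need an argument of this kind; the single-mode example does not suffice.
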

\begin{proof}
 Admissibility of $u$ means that $ \nor{{V} v}_{L^2(G)} = \nor{v}_\hh $ must hold for all $v\in\hh$.
{Fix $n\in\Z$ and choose $v=v_n\otimes e_n$ with $v_n\in
  \ff^{-1}L^2(\widehat{\R}_+)$. By} \eqref{eq:schlets-series},
when computing the norm, the integral on the circle is equal to $1$,
whereas the integration on ${\R\rtimes \R_+}$ provides  the classical admissibility condition,  namely Calder\'on's equations~\eqref{eq:nCalderon}.
 
 Conversely, suppose \eqref{eq:nCalderon}  true for every $n\in\Z$.
 Fix $v\in\hh$.  Given $(b,a)\in\R\rtimes\R_+$, \eqref{eq:schlets-coeff}
 implies that the function $Vv(b,a,\cdot)$ is in $L^2(S^1)$ if and only if the sequence
 $(Vv_n(a,b))_{n\in\Z}$ is in $\ell^2(\Z)$ and, under this assumption,
Fubini theorem yields 
\begin{equation}
  \nor{{V} v}^2 _{L^2(G)} = \int_{\R\rtimes\R_+} \sum_{n\in\Z}
  \abs{{V^{\text{w}}_n}v_n(b,a)}^2 \frac{dbda}{a^2}= \sum_{n\in\Z} \int_{\R} \abs{v_n(x)}^2
  dx = \nor{v}_\hh^2\label{eq:62}
\end{equation}
because by~\eqref{eq:nCalderon}  for each $n$ the voice $V^{\text{w}}_n$ is an
isometry from $\ff_x^{-1} L^2(\widehat{\R_+})$ into
$L^2(\R\rtimes\R_+, dbda/a^2)$.  The last equality is due to~\eqref{eq:60}.
Equation~\eqref{eq:H-convergence}  is a consequence of~\eqref{eq:60}
and the fact that $\ff_x$ is unitary.  

To prove that $\pi$ is reproducing, it is enough to show there exists
a sequence $(u_n)_n$ in  $\ff^{-1}L^2(\widehat{\R}_+)$ satisfying both~\eqref{eq:nCalderon}
and~\eqref{eq:H-convergence}. 
Fix $u_0\in \ff^{-1}L^2(\widehat{\R}_+)$
satisfying~\eqref{eq:nCalderon}, {\em i.e.},  an admissible vector for
the wavelet representation $w$. For all $n\in \Z$ 
define the functions $u_n \in\ff^{-1}L^2(\widehat{\R}_+)$  as
\[ u_n(x)=a_n u_0 (a_nx),\qquad \ff_x u_n(\xi) = \ff_x u_0 (a_n^{-1}\xi),\]
where $a_n>0$ and $\sum_{n\in\Z} a_n < +\infty$.  Since $u_0$ satisfies
\eqref{eq:nCalderon}, so do all the functions $u_n$. Further,
$$ 
\sum_{n\in\Z} \int_{\R_+} \abs{\ff_xu_0(\xi)}^2 d\xi =
\sum_{n\in\Z} \int_{\R_+} \abs{\ff_xu_0(a_n^{-1}\xi)}^2 d\xi =
\nor{u_0}^2 \sum_{n\in\Z} a_n < +\infty,
$$
so that by~\eqref{eq:H-convergence} the vector $ u = \sum u_n\otimes
e_n$ is in $\hh$ and  is admissible for $\pi$.

Finally, we prove~\eqref{eq:61}. By~\eqref{eq:62}  the series
$\sum_{n\in\Z} V^{\text{w}}_n v_n\otimes e_n$ converges in $L^2(G)$
to $V_2v$.
\end{proof}

Now we come to the integrability question. The idea is based on the  very simple observation that
 Calder\'on's equation \eqref{eq:nCalderon} is invariant under dilations.

\begin{prop}
There exist admissible vectors $ u \in \hh $ whose kernel $K={V_2}
 u$ belongs to $\bigcap_{p\in I}L^p(G)$ but  not to $L^1(G)$.
\end{prop}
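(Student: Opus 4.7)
The plan is to build $u$ as a series $u=\sum_{n\in\Z}u_n\otimes e_n$ whose components $u_n$ are isotropic dilates of a single Shannon-type wavelet $u_0$, so as to exploit two ingredients from earlier in the paper: the dilation invariance of Calder\'on's equation~\eqref{eq:nCalderon} (which automatically propagates admissibility from $u_0$ to every $u_n$), and the kernel analysis of Section~\ref{shannon} (which provides a wavelet kernel lying in every $L^p(\R\rtimes\R_+)$ with $p>1$ but not in $L^1(\R\rtimes\R_+)$). The decay rate of the dilation parameters will be tuned so that the $L^p$-norms of the resulting Schr\"odingerlet kernel $K$ remain summable for every $p>1$.

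Concretely, I would choose $u_0\in\ff_x^{-1}L^2(\widehat\R_+)$ with $\widehat{u_0}=(\log 2)^{-1/2}\chi_{[1/4,1/2]}$, the one-sided analogue of the Shannon wavelet, for which the admissibility condition $\int_{\R_+}|\widehat{u_0}(\xi)|^2\frac{d\xi}{\xi}=1$ holds and the kernel $k_0=V_0^{\mathrm{w}}u_0$ is supported in $\R\times(1/2,2)$ with partial $b$-Fourier transform equal to a non-zero characteristic function of a bounded interval for each $a$ in the interior of that range. The argument of Section~\ref{shannon} then gives $k_0\in\bigcap_{p>1}L^p(G')\setminus L^1(G')$, where $G'=\R\rtimes\R_+$. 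I would then pick an exponentially decaying positive sequence, say $a_n=e^{-|n|}$, and set $u_n(x)=a_nu_0(a_nx)$. Since Calder\'on's equation is dilation invariant, each $u_n$ remains admissible, and $\sum_n\|u_n\|^2_{L^2(\R)}=\|u_0\|_{L^2(\R)}^2\sum_na_n<+\infty$, so criterion~\eqref{eq:H-convergence} ensures that $u=\sum_nu_n\otimes e_n\in\hh$ and the preceding proposition guarantees that it is admissible for $\pi$.

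A direct change of variables gives $V_n^{\mathrm{w}}u_n(b,a)=a_nk_0(a_nb,a)$ and therefore $\|V_n^{\mathrm{w}}u_n\|_{L^p(G')}=a_n^{(p-1)/p}\|k_0\|_{L^p(G')}$. Combining the pointwise triangle inequality $|K(b,a,\varphi)|\leq\sum_n|V_n^{\mathrm{w}}u_n(b,a)|$ (from the Fourier expansion~\eqref{eq:schlets-series}) with Minkowski's inequality in $L^p(G')$ yields
\[
\|K\|_{L^p(G)}\leq \|k_0\|_{L^p(G')}\sum_{n\in\Z}a_n^{(p-1)/p},
\]
and the exponent $(p-1)/p$ is strictly positive for $p>1$, so the series on the right converges and $K\in\bigcap_{p>1}L^p(G)$.

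For the non-integrability I would extract Fourier coefficients along the circle. If $K$ were in $L^1(G)$, then Fubini's theorem and the trivial bound $|V_n^{\mathrm{w}}u_n(b,a)|\leq \int_{S^1}|K(b,a,\varphi)|\frac{d\varphi}{2\pi}$ would force $\|V_n^{\mathrm{w}}u_n\|_{L^1(G')}\leq \|K\|_{L^1(G)}<+\infty$ for every $n$; in particular $k_0\in L^1(G')$, contradicting the Shannon argument. The only genuinely non-trivial ingredient in this strategy is the $L^1$-failure of $k_0$, and that is exactly what Section~\ref{shannon} achieves after the straightforward one-sided modification of the analyzing vector; the rest of the proof is just bookkeeping with the Fourier series~\eqref{eq:schlets-series}.
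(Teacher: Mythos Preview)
Your proof is correct, and the $L^p$ half is essentially identical to the paper's: both use the dilated sequence $u_n(x)=a_nu_0(a_nx)$, compute $K_n(b,a)=a_nK_0(a_nb,a)$, and sum the resulting $L^p$-norms via Minkowski to get $\|K\|_p\le\|K_0\|_p\sum_n a_n^{1-1/p}$.

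The non-$L^1$ argument, however, differs genuinely. You fix $u_0$ to be a one-sided Shannon wavelet so that $k_0\notin L^1(G')$ by the analysis of Section~\ref{shannon}, and then observe that the single Fourier coefficient $K_0=V_0^{\mathrm w}u_0$ is dominated in $L^1$-norm by $\|K\|_{L^1(G)}$, forcing a contradiction. The paper instead proves that the construction fails to lie in $L^1(G)$ for \emph{any} admissible $u_0$ (in particular for $u_0$ with $K_0\in L^1(G')$): assuming $K\in L^1(G)$, for almost every $a$ the Fourier transform $\mathcal F K(\xi,n)$ on $\widehat\R\times\Z$ lies in $C_0$ by Riemann--Lebesgue; but $\mathcal F K(\xi,n)=\widehat{K_0(\cdot,a)}(\xi/a_n)$, so evaluating along $\xi=a_n\bar\xi$ and letting $n\to\infty$ forces $\widehat{K_0(\cdot,a)}\equiv 0$, hence $K_0=0$. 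Your route is shorter and recycles Section~\ref{shannon} directly; the paper's route is a bit more delicate but reveals that the $L^1$-failure is intrinsic to the dilation construction and not an artifact of the Shannon choice.
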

\begin{proof}
Define $u$ as in the second part of the proof of the above
 proposition. Using~\eqref{eq:61} we write $K=\sum K_n \otimes e_n$ where 
 $ K_n = {V^{\text{w}}_n}u_n $ and the series converges both in $L^2(G)$ and pointwise. By a simple change of variable, we get that 
 $ K_n(b,a) = a_n K_0(a_n b,a)$.  Therefore for any $p\in I$
 \[
 \nor{K}_p \leqslant \sum_{n\in\Z} a_n \left( \int_{\R\rtimes\R_+} \abs{K_0(a_n b,a)}^p \frac{dbda}{a^2} \right)^{1/p}
    = \nor{K_0}_p \sum_{n\in\Z} a_n^{1-1/p}.
 \]
In order to construct $u$, it is therefore sufficient to take a
positive sequence for which $\sum_{n\in\Z}a_n^\alpha$ converges for
every $\alpha\in(0,1]$.

 We now prove that the kernel is not in $L^1(G)$.  By
  contradiction, assume that $K\in L^1(G)$. Fubini's theorem implies
  that for almost all $a\in \R_+$ the function $K(\cdot,a,\cdot)$ is
  in $L^1(\R\times S^1)$. Hence, regarding $\R\times S^1$ as an
  abelian group, its Fourier transform
\begin{align*}
  \mathcal F K(\xi,n) & = \int_{\R\times S^1} K(b,a,\varphi)
  e^{-in\varphi} e^{-2\pi i b\xi} db \frac{d\varphi}{2\pi} \\
   & =  \int_{\R} \left(\int_{S^1} K(b,a,\varphi)
  e^{-in\varphi} db \right) e^{-2\pi i b\xi} \frac{d\varphi}{2\pi} 
\end{align*}
is in $C_0(\widehat{\R}\times\mathbb Z)$. By~\eqref{eq:schlets-coeff}, it holds
that
\begin{align*}
  \mathcal F K(\xi,n) & = \int_{\R} a_n K_0(a_nb,a) e^{-2\pi i
    b\xi}\,db \\
& = \int_{\R}  K_0(b) e^{-2\pi i b\frac{\xi}{a_n}}\,db =  \hat{g}(\frac{\xi}{a_n}),
\end{align*}
where $\hat g$ is the Fourier transform of the function
$K_0(\cdot,a)$, which is in $L^1(\R)$ by Fubini's theorem. Fix
$\overline{\xi}\in\widehat{\R}$ and set $\xi=a_n \overline{\xi}$ in
the above equality. Then 
\[
\hat{g}(\overline{\xi} ) =\lim_{n\to\infty} \mathcal F K(a_n
\overline{\xi},n) = 0,
\]
because $ \mathcal F K\in C_0(\widehat{\R}\times\mathbb Z)$. Hence, by
injectivity of the Fourier transform, $K_0(b,a_0)=0$ for almost all
$b\in \R$. Since the above equality holds for almost all $a\in
\R_+$, we get that $K_0=0$, which is a contradiction.
\end{proof}

\section{$L^1$-kernels: the non irreducible coorbit theory}\label{L1}

In this section, we apply our machinery  and show
that the standard setup of coorbit theory makes sense without assuming
that the representation $\pi$ is irreducible, because it corresponds
to  the case arising from the classical choice $\T=L^1_w(G)$.
The fact that irreducibility is a somewhat redundant assumption has been perhaps known to some extent, but it is not easy to pin down precise statements in the literature. 
Theorem~\ref{nonirreducible} below contains a summary of the most relevant facts.

It is perhaps worthwhile observing that the present case is structurally different from the case discussed in Section~\ref{mainexample} because $L^1_w(G)$ is not a reflexive space.

Throughout this section, we fix a continuous function $w:G\to (0,+\infty)$ satisfying the following
assumptions (see \cite{gro91}):
\begin{subequations}
  \begin{align}
    \label{eq:33}
    w(xy) & \leq w(x)w(y) \\
    w(x) & \geq 1  \label{eq:46} 
  \end{align}
for all $x,y\in G$.
\end{subequations}
We choose as target space $\T$ the Banach space $\Luw$ and denote by $j$
the canonical inclusion into
$\Lloc\subset\Lz$. Since $j$ is canonical, we do not write it
explicitly, especially because it would conflict with the current literature,
where no explicit mention of the embedding is ever made.

Since Lemma~\ref{4.1} and Lemma~\ref{leftreg} do not depend on
  Assumption~\eqref{eqb:45},
\[\Luw^\#=\set{\Phi\in \Lz \mid w^{-1} \Phi \in L^\infty(G)}\]
the left regular representation $\la$ leaves  
    $L^1_w(G)$ invariant, and  the restriction $\ell$ of $\la$ to
    $L^1_w(G)$ is a continuous representation that satisflies
     \begin{equation}
\norop{\la(x)}{1,w}\leq w(x),\qquad x\in G.\label{eq:51}
\end{equation}

We assume that there exists an admissible vector $u\in \hh$ whose voice transform
$V_2u$ is in $\Luw$  and construct the corresponding reservoir $\Ss_w$ of
test functions.  We are in a position of stating  the main properties of the standard setup, without the assumption that the representation is irreducible.
\begin{thm}\label{nonirreducible}  Take a reproducing representation $\pi$ of $G$ acting on
  the Hilbert space $\hh$ and a weight $w$ satisfying \eqref{eq:33} and \eqref{eq:46}. 
Choose an admissible vector $u\in\hh$ such that
\[ 
K(\cdot)=\scal{u}{\pi(\cdot)u}_\hh \in L^1_w(G)
\]
and set
\begin{align*} 
&\Ss_w =\set{v\in\hh\mid \scal{v}{\pi(\cdot)u}_\hh \in L^1_w(G)},\\
&\nor{v}_{\Ss_w}= \int_G \abs{\scal{v}{\pi(x)u}_\hh}w(x) dx. 
\end{align*}
  \begin{enumerate}[a)]
   \item The space ${\Ss_w}$ is a Banach space and the canonical inclusion
     $i:{\Ss_w}\to\hh$ is continuous and  with dense range.
\item The representation $\pi$ leaves ${\Ss_w}$ invariant,  its restriction $\tau$ is a continuous
    representation acting on ${\Ss_w}$, the operator norms satisfy $\norop{\tau(x)}{{\Ss_w}}\leq
    w(x)$ for all $x\in G$, and
\[ i(\tau(x)v)=\pi(x)i(v) \qquad x\in G,\,v\in{\Ss_w}.\]
\item Endowing $\hh$ with the weak topology and $\Ss'$ with the
    weak-$*$ topology, the transpose mapping $\tr{i}:\hh\to {\Ss'_w}$ is continuous, injective
and  with dense range, and satisfies the intertwining 
\[ \tr{\tau(x)}\tr{i}(w)=\tr{i}(\pi(x)w) \qquad x\in G,\,w\in \hh.\]
\item The restricted voice transform $V_0:{\Ss_w}\to \Luw$ is an
  isometry intertwining $\tau$ and $\la$ and its 
range  is the closed subspace
\[ \mathcal M^1=\set{f\in \Luw  \mid f\con K= f},\]
\item For all $f\in \Luw$, the  Fourier
  transform of $f$ at $u$ exists  in $\Ss_w$ and satisfies
\[
V_0\pi(f)u= f\con K.
\]
Furthermore, the map
\[ 
L^1_w(G)\ni f\mapsto \pi(f)u\in {\Ss_w} 
\]
is continuous and its restriction to $\mathcal M^1$
is the inverse of $V_0$.
\item The extended voice transform $V_e:{\Ss'_w}\to\Liw$ is injective,
  continuous (when both spaces are endowed with the topology of the
  compact convergence) and intertwines  $\tr{\tau}$ and $\la$. The range
  of $V_e$ is the closed subspace 
\[ \mathcal M^{\infty}=\set{\Phi\in \Liw \mid \Phi\con K=
  \Phi}\subset C(G).\]
For all $T\in \Ss_w'$ and $v\in \Ss_w$
\begin{equation}
\scal{T}{v}_{\Ss_w}= \scal{V_eT}{V_0v}_{\Luw} .\label{eq:52}
\end{equation}
\item For all $\Phi\in\Liw$  the Fourier
  transform of $\Phi$ exists at $u$ in $\Ss_w'$  and satisfies
\[ V_e \pi(\Phi)u= \Phi\con K.\]
\item The map 
\[  
\mathcal M^{\infty}\ni \Phi\mapsto \pi(\Phi)u\in {\Ss'_w}
\]
is the inverse of $V_e$ and coincides with the restriction of the map
  $\tr{\,V_0}$ to $\mathcal M^{\infty}$, namely 
  \begin{equation}
    \label{eq:39bis}
   V_e( \tr{\,V_0}\Phi)= V_e\pi(\Phi)u= \Phi,\qquad \Phi\in \mathcal M^{\infty}.
  \end{equation}
\item  $\displaystyle{\tr{i}i({\Ss_w})=\set{T\in{\Ss'_w}\mid V_eT\in
      \Luw}=\set{\pi(f)u \mid f\in \mathcal M^1}}$.
  \end{enumerate}
\end{thm}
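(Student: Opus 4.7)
\emph{Strategy.} The plan is to verify that the Banach space $\T=\Luw$, equipped with complex conjugation as involution and with the restriction $\ell$ of $\la$ as continuous representation (with $\norop{\ell(x)}{}\leq w(x)$ by Lemma~\ref{leftreg}), satisfies Assumptions~\ref{H1}--\ref{H4}, and then to read items~(a)--(i) off Theorems~\ref{tuno} and~\ref{lsei}, Proposition~\ref{prop6}, and arguments strictly parallel to those in the proof of Theorem~\ref{intersections}. The K\"othe dual is $\T^\#=\Liw$ by Lemma~\ref{4.1}, which as noted does not depend on the symmetry~\eqref{eqb:45}.

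\emph{Verification of the assumptions.} Assumption~\ref{H1} holds because $K\in\Luw$ by hypothesis, while $\abs{K(x)}\leq\nor{u}_\hh^2$ gives $K\in L^\infty(G)\subset\Liw=\T^\#$. For Assumption~\ref{H2}, any $f\in\mathcal M^1\subset\Luw\subset L^1(G)$ (since $w\geq 1$) multiplied by the bounded function $V_2v$ is in $L^1(G)$. Assumption~\ref{H4} follows from Proposition~\ref{suff2}: one has $\T^\#=\T'$ for $L^1$-type spaces, and the classical Young inequality for submultiplicative weights gives $\Luw\con\Luw\subset\Luw$, whence $\abs{K}\con\abs{K}\in\Luw$. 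For Assumption~\ref{H3}, the cyclicity of $u$ for $\tau$, I would argue as follows: for $f\in\mathcal M^1$ the identity $f=f\con K$ realizes $f$ as the Bochner-type integral $\int_G f(y)\ell(y)K\,dy$ in $\Luw$; truncating to compact subsets $Q_N\uparrow G$ yields $(f\chi_{Q_N})\con K\to f$ in $\Luw$ by continuity of convolution, and each $\int_{Q_N}f(y)\ell(y)K\,dy$ is a norm-limit of Riemann sums $\sum c_i\ell(y_i)K=V_0\bigl(\sum c_i\tau(y_i)u\bigr)$ thanks to continuity of $y\mapsto\ell(y)K$ (Lemma~\ref{leftreg}).

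\emph{Derivation of items~(a)--(i).} Items~(a)--(c) follow directly from Theorem~\ref{tuno}, with the norm bound $\norop{\tau(x)}{\Ss_w}\leq w(x)$ coming from $V_0\tau(x)=\ell(x)V_0$ combined with~\eqref{eq:51}; item~(d) is Theorem~\ref{lsei}, the isometry $\nor{v}_{\Ss_w}=\nor{V_0v}_{1,w}$ being built into the definition of $\Ss_w$. For~(e), each $f\in\Luw\subset L^1(G)$ produces $\pi(f)u\in\hh$ as a Bochner integral satisfying $V_2\pi(f)u=f\con K\in\Luw$ by Young's inequality, so $\pi(f)u\in\Ss_w$ with $\nor{\pi(f)u}_{\Ss_w}\leq\nor{K}_{1,w}\nor{f}_{1,w}$; restricted to $\mathcal M^1$ this map inverts $V_0$. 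Items~(f)--(h) mirror the analogous items of Theorem~\ref{intersections}: the bound $\abs{V_eT(x)}\leq\nor{T}_{\Ss_w'}\nor{\tau(x)u}_{\Ss_w}\leq Cw(x)$ gives $V_e\Ss_w'\subset\Liw$; the identification of the range as $\mathcal M^\infty$ and the reconstruction~\eqref{eq:52} follow from Proposition~\ref{prop6}, the duality $\Liw=(\Luw)'$, and Assumption~\ref{H4}; for~(g), Proposition~\ref{due} applies to any $\Phi\in\Liw$ since $\Phi V_0v\in L^1(G)$ for every $v\in\Ss_w$; item~(h) rests on~\eqref{eq:22} combined with the reproducing formula. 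Finally, item~(i) is a straightforward consequence of items~(d), (e), (h) together with injectivity of $V_e$: the three sets all coincide with the image of $\mathcal M^1$ under $\pi(\cdot)u$.

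\emph{Main obstacle.} The trickiest point is Assumption~\ref{H3}: without irreducibility one cannot simply invoke that a reproducing vector generates all of $\hh$, and unlike Section~\ref{mainexample} the Banach space $\Luw$ is not reflexive, so Proposition~\ref{repr-formula} is unavailable. The Bochner--Riemann approximation sketched above does the job but requires simultaneous truncation and Riemann-sum approximation in a non-reflexive setting. A secondary subtlety is that the convolution bound $\Liw\con\Luw\subset\Liw$ needed to make sense of $V_eT\con K$ as an element of the target dual is not symmetric in $w$ without~\eqref{eqb:45}; this is bypassed by deriving the reproducing identity $V_eT\con K=V_eT$ directly from the reconstruction~\eqref{eq:52} combined with the symmetry $\overline{K}=\widecheck{K}$ of~\eqref{eq:32}, so the asymmetric convolution estimate need only hold pointwise rather than as a bounded map on the whole space.
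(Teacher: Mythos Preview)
Your approach is correct and reaches the same conclusions, but it differs from the paper's in the order of operations for Assumptions~\ref{H3} and~\ref{H4}.

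You verify all four assumptions up front: Assumption~\ref{H4} via Proposition~\ref{suff2} (using $\T'=\T^\#=\Liw$ and $\abs{K}\con\abs{K}\in\Luw$), and Assumption~\ref{H3} via a truncation-plus-Riemann-sum argument, and then you invoke the general machinery. The paper instead proves only Assumptions~\ref{H1} and~\ref{H2} at the outset, applies the parts of Theorems~\ref{tuno} and~\ref{lsei} that do not depend on Assumption~\ref{H3}, and then in item~(f) establishes the reconstruction formula~\eqref{eq:52} \emph{directly} by showing that, for every $v\in\Ss_w$, the map $x\mapsto\overline{V_0v(x)}\,\tau(x)u$ is Bochner-integrable in the Banach space $\Ss_w$ with integral equal to $v$. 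Cyclicity of $u$ (Assumption~\ref{H3}) and the reproducing formula for distributions (Assumption~\ref{H4}) then drop out simultaneously as consequences of~\eqref{eq:52}.

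The paper's route is more economical precisely because $\Ss_w$ is a Banach space here: Bochner integration is available and replaces your Riemann-sum discretization in a single stroke. Your argument is not wrong---Riemann sums for the vector-valued integral $\int_{Q_N}f(y)\ell(y)K\,dy$ do converge, since $y\mapsto\ell(y)K$ is continuous---but it is the hands-on version of what the Bochner integral gives in one line. Your observation that Propositions~\ref{repr-formula} and~\ref{suff2} provide alternative paths to Assumption~\ref{H4} is correct and worth noting; the paper simply does not need them because the direct Bochner computation covers both assumptions at once.
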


\begin{proof}  Since $\Luw\subset L^1(G)$ and $K, V_2v\in L^\infty(G)$ for all
    $v\in\hh$, Assumptions~\ref{H1} and \ref{H2} are satisfied.
  \begin{enumerate}[a)]
  \item[a)] By the first part of Theorem~\ref{tuno}  which does not depend on Assumption~\ref{H3}. The
  space $\Ss$ is a Banach space because $\mathcal M^{1}$ is such.
\item[b)] Apply Theorem~\ref{tuno}. Moreover, by ~\eqref{eq:51}
\[\nor{\tau(x)v}_{\Ss_w}=\nor{V_0
  \tau(x)v}_{1,w}=\nor{\la(x)V_0v}_{1,w}\leq w(x)\nor{V_0v}_{1,w}=
w(x) \nor{v}_{\Ss_w}.\]
\item[c)]  Apply Theorem~\ref{tuno}.
\item[d)] Apply Theorem~\ref{lsei}.
\item[e)] Fix $f\in \Luw$ and set $\Psi:G\to{\Ss_w}$, $\Psi(x)=
f(x) \tau(x)u$. We show that $\Psi$
is Bochner-integrable with respect to $\beta$. The map $\Psi$ is 
$\beta$-measurable since $f\in \Lz$ and $x\mapsto\tau(x)u$ is
continuous from $G$ into $\Luw$, and, by item b),
\[ \nor{\Psi(x)}_{\Ss_w}= \abs{f(x)}\nor{\tau(x)u}_{\Ss_w}\leq w(x) \nor{u}_{\Ss_w}
\abs{f(x)},\]
which is in $L^1(G)$ since $f\in\Luw$.  Set
\[ \pi(f)u=\int_G f(x) \tau(x)u\, dx.\]
Clearly, for all $v\in{\Ss_w}$ 
\[ \scal{\tr{i}i \pi(f)u}{v}_{\Ss_w}= \int_G f(x) \scal{i(\tau(x)u)}{i(v)}_\hh
dx = \int_G f(x) \scal{\pi(x) u}{i(v)}_\hh dx.\]
Hence $\tr{i}i \pi(f)u$ satisfies ~\eqref{eq:12}  and we can identify
$\pi(f)u$ with $\tr{i}i \pi(f)u$. So $V_e\pi(f)u=V_0\pi(f)u$. The fact
that $V_0\pi(f)u= \Phi\con K$ follows from \eqref{eq:37} with $\F=\Luw$ and $\E={\Ss_w}$. The fact that
$f\mapsto \pi(f)u$ is the inverse of $V_0$ follows from~\eqref{eq:38c} in
Proposition~\ref{cuno}.
\item[f)] We first prove that $V_e\Ss'_w\subset \Liw$.  Take $T\in\Ss'_w$. For all $x\in G$,  by~\eqref{eq:51}
\[ \abs{\scal{T}{\tau(x)u}_{\Ss_w}}\leq \nor{T}_{\Ss'_w} \nor{\tau (x)u}_{\Ss_w}=
\nor{T}_{\Ss'_w} \nor{\la(x)Vu}_{1,w} \leq w(x)   \nor{T}_{\Ss'_w}
\nor{K}_{1,w},\]
 so that $w^{-1}V_eT$ is bounded and continuous. 
We now prove the reconstruction formula~\eqref{eq:52}. Fix $v\in {\Ss_w}$ and define the map $\Psi:G\to {\Ss_w}$ by $\Psi(x)=
\scal{\pi(x)u}{i(v)}_\hh \tau(x)u=\overline{V_0v(x)} \tau(x)u$. We show that it
is Bochner-integrable with respect to $\beta$. It is 
continuous since both $V_0v$ and $\tau(\cdot)u$ are continuous, and
\[ \nor{\Psi(x)}_{\Ss_w}= \abs{V_0v(x)}\nor{\tau(x)u}_{\Ss_w}\leq w(x) \abs{V_0v(x)}
\nor{K}_{1,w},\]
which is in $L^1(G)$ by definition of ${\Ss_w}$.  Hence there exists
$w_v\in {\Ss_w}$ such that
\[ w_v= \int_G \scal{\pi(x)u}{i(v)}_\hh \tau(x)u\, dx.\]
For all $z\in\hh$ we have $\tr{i}(z)\in \Ss'_w$ and 
\begin{align*}
  \scal{z}{i(w_v)}_\hh=\scal{\tr{i}(z)}{w_v}_{\Ss_w} & = \int_G \scal{\pi(x)u}{i(v)}_\hh
  \scal{\tr{i}(z)}{\tau(x)u}_{\Ss_w} dx  \\
 &  = \int_G \scal{\pi(x)u}{i(v)}_\hh \scal{z}{\pi(x)u}_\hh dx  = \scal{z}{i(v)}_\hh,
\end{align*}
that is, $w_v=v$.  Hence
\[ v= \int_G \scal{\pi(x)u}{i(v)}_\hh \tau(x)u\, dx,\]
where the integral is a Bochner integral in $\Ss_w$.
Take $T\in \Ss'_w$. Then for all $v\in {\Ss_w}$
\[ \scal{T}{v}_{\Ss_w} = \int_G \scal{\pi(x)u}{i(v)}_\hh
\scal{T}{\tau(x)u}_{\Ss_w}\, dx,\]
which proves the reconstruction formula. This, in turn,  implies that $V_e$ is injective.
\item[g)] Apply item a) of Proposition~\ref{cuno} with $\E={\Ss_w}$
and $\F=\Liw$.
\item[h)] Items b) and d) of Proposition~\ref{cuno} with $\F=\Liw$ and $\E={\Ss_w}$ show  that the range
  of $V_e$ is the closed subspace $\mathcal M^\infty$ and that  the inverse
  of $V_e$   is $\Phi\mapsto \pi(\Phi)u$. 
\item[i)] Since $V_0{\Ss_w}\subset\Luw$ and $V_0v=V_e\tr{i}i(v)$ for all
  $v\in{\Ss_w}$, it follows that  $\tr{i}i({\Ss_w})\subset \set{T\in\Ss'_w\mid V_eT\in
      \Luw}$. Furthermore, Proposition~\ref{cuno} with $\F=\Luw$ and
    $\E={\Ss_w}$ gives that 
\[\set{T\in\Ss'_w\mid V_eT\in      \Luw}=\set{\pi(f)u \mid f\in \mathcal M^1}.\]
Item~d) of this
  theorem finally implies  that $\set{\pi(f)u \mid f\in \mathcal M^1}={\Ss_w}$. \qedhere
  \end{enumerate}
\end{proof}
As shown in the previous proof, Assumptions~\ref{H1} and \ref{H2} are satisfied.
The reconstruction formula~\eqref{eq:52} makes
 clear that $u$ is a cyclic vector for $\tau$, which is equivalent to
 Assumption~\ref{H3} because $V_0\tau(x)u=\ell(x)K$ and $V_0$ is an
 isometry from $\Ss_w$ onto $\mathcal M^1$.
Furthermore,~\eqref{eqb:44}  with $v=\tau(x)u$ implies that also
Assumption~\ref{H4} holds true.

From now until the end of this section  we choose  a Banach space  $Y$ with a
continuous embedding $j:Y\to \Lloc$,  denoted $f\mapsto
f(\cdot)$, and with  a continuous involution $f\mapsto \overline{f}$. We further suppose that there are two  continuous representations $\ell$ and $r$ of $G$ on $Y$ satisfying 
\begin{enumerate}[i)]
\item for all $x\in G$ and all $f\in Y$ 
  \begin{equation}
    \label{eq:53}
    j(\ell(x) f)= \la(x) j(f),\qquad j(r(x) f)= \rho(x) j(f);
  \end{equation}
\item for all $f\in Y$ and almost every  $x\in G$,
  \begin{equation}
    \label{eq:54}
    j(\overline{f})(x)=\overline{j(f)(x)}.
  \end{equation}
\end{enumerate}

\begin{prop}\label{prop4}
Assume that $Y$ is a Banach space with a continuous representation $r$
for which there exists a continuous embedding $j:Y\to \Lloc$, denoted $f\mapsto f(\cdot)$, such that,
  for all $x\in G$, all $f\in Y$   and almost every $y\in G$ it holds that
  $r(x)f\,(y)=f(yx)$.  Suppose that $g\in \Lloc$ is such that for all $f\in Y$
\begin{equation}
  \label{eq:47}
  \int_G \abs{g(x^{-1})}  \nor{r(x)f}_{Y} \,dx<+\infty.
\end{equation}
Then $j(f)$ and $g$ are convolvable, there exists $f\con g\in Y$ satisfying
$j(f)\con g=j(f\con g)$ and 
\[\nor{f\con g}_Y\leq  \int_G \abs{g(x^{-1})}
\nor{r(x)f}_{Y} \,dx.\]
\end{prop}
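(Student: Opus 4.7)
The plan is to realize $f \con g$ as a Bochner integral in $Y$ and then identify it, via $j$, with the pointwise convolution $j(f) \con g$. The key observation is the formal identity
\[
j(f)\con g\,(y) \;=\; \int_G j(f)(z)\,g(z^{-1}y)\,dz \;=\; \int_G g(x^{-1})\, j(f)(yx)\,dx \;=\; \int_G g(x^{-1})\, \rho(x)j(f)\,(y)\,dx,
\]
obtained by the substitution $z = yx^{-1}$ (taking into account that $dy = \Delta(x^{-1})dz$ when $z = yx$, which combines with the Jacobian from $x\mapsto x^{-1}$ to give a net factor of $1$). This suggests defining $f \con g$ as the integral in $Y$ of $\Psi(x) := g(x^{-1})\, r(x)f$, using $j(r(x)f) = \rho(x)j(f)$.

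First, I would verify Bochner integrability of $\Psi: G \to Y$. Strong measurability follows because $x\mapsto r(x)f$ is continuous by the continuity of the representation $r$, and $x\mapsto g(x^{-1})$ is $\beta$-measurable; their product is strongly measurable. The integrability of $\|\Psi(\cdot)\|_Y = |g(\cdot^{-1})|\,\|r(\cdot)f\|_Y$ is precisely hypothesis~\eqref{eq:47}. Hence the Bochner integral
\[
h \;:=\; \int_G g(x^{-1})\, r(x)f\,dx \;\in\; Y
\]
exists, and the standard Bochner inequality gives
\[
\|h\|_Y \;\leq\; \int_G |g(x^{-1})|\,\|r(x)f\|_Y\,dx.
\]

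Next, I would show that $j(h) = j(f)\con g$ almost everywhere, so that in particular the convolution $j(f)\con g$ exists in the sense of Section~\ref{background} and the element to call $f\con g$ is $h$. Since $j:Y\to\Lloc$ is continuous and linear, it commutes with Bochner integration, giving
\[
j(h) \;=\; \int_G g(x^{-1})\,\rho(x)\,j(f)\,dx
\]
as a Bochner integral in $\Lloc$. To read off the pointwise value, I would pair with an arbitrary $\varphi \in L^\infty(G)$ of compact support: the continuous linear form $\Lloc \ni F \mapsto \int F \varphi\,dx$ commutes with the Bochner integral, yielding
\[
\int_G j(h)(y)\varphi(y)\,dy \;=\; \int_G g(x^{-1})\int_G j(f)(yx)\varphi(y)\,dy\,dx,
\]
and the substitution used above converts the right-hand side into $\int_G (j(f)\con g)(y)\varphi(y)\,dy$. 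By varying $\varphi$ over $C_c(G)$ this proves $j(h)=j(f)\con g$ a.e. The existence of the convolution $j(f)\con g$ almost everywhere comes for free from the Fubini-type argument: for each compact $K\subset G$ the semi-norm $p_K(F)=\int_K|F|\,dy$ satisfies $p_K\circ j \leq C_K\|\cdot\|_Y$ by continuity of $j$, so
\[
\int_K\!\int_G |g(x^{-1})||j(r(x)f)(y)|\,dx\,dy \;\leq\; C_K\!\int_G |g(x^{-1})|\,\|r(x)f\|_Y\,dx \;<\;+\infty,
\]
forcing $z\mapsto j(f)(z)g(z^{-1}y)$ to lie in $L^1(G)$ for almost every $y\in K$, hence for almost every $y\in G$.

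The main technical obstacle is the legitimacy of the Fubini step, which depends on the joint $\beta\otimes\beta$-measurability of $(x,y)\mapsto j(r(x)f)(y) = j(f)(yx)$. This is handled by the fact that $j(f)\in\Lloc$ is $\beta$-measurable, so $(x,y)\mapsto j(f)(yx)$ is $\beta\otimes\beta$-measurable (composition with the continuous map $(x,y)\mapsto yx$, together with Fubini/Tonelli for the product Haar measure). Once joint measurability is settled, the estimate above justifies all the interchanges and completes the identification $j(f\con g) = j(f)\con g$, and the norm bound is already established.
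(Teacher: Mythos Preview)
Your proposal is correct and follows essentially the same route as the paper's proof: define the candidate $h=\int_G g(x^{-1})\,r(x)f\,dx$ as a Bochner integral in $Y$ (measurability from continuity of $x\mapsto r(x)f$, integrability from~\eqref{eq:47}), get the norm bound immediately, and then identify $j(h)$ with $j(f)\con g$ by pairing against compactly supported test functions and invoking Fubini. The paper pairs directly via $C_c(G)\subset Y^\#$ and the estimate~\eqref{eq:50}, whereas you push $j$ through the Bochner integral into $\Lloc$ first and then pair; these are equivalent. Your Fubini justification via $p_K\circ j\le C_K\nor{\cdot}_Y$ is exactly the paper's use of~\eqref{eq:50}, and your exhaustion by compacta matches the paper's exhaustion by the sets $E_{\varphi_k}$. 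One cosmetic slip: the substitution in your opening display should be $z=yx$ (so $dz=dx$ by left invariance, no modular factors needed), not $z=yx^{-1}$; this does not affect the argument since the actual proof proceeds via pairing rather than this heuristic computation.
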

\begin{proof}
The proof is closely related to the proof of Proposition~6 Chapter
VIII.4.2 of \cite{BINT2}.   Fix $f\in Y$ and $g\in \Lloc$ and set
\[\Psi:G\to  Y,\qquad \Psi(x)= g(x^{-1})r(x) f.\]
We claim that $\Psi$ is $\beta$-integrable in the Bochner sense. 
Since $r$ is a continuous representation,  the map $x\mapsto r(x)f$ is
continuous from $G$ to $Y$ and hence it is $\beta$-measurable.  Since $g\in
\Lloc$,  so is $\check{g}$, and hence $\Psi$ is
$\beta$-measurable. Furthermore, 
\[ x\mapsto \nor{\Psi(x)}_Y = \abs{g(x^{-1})}\nor{r(x)f}_{Y}\]
is $\beta$-integrable by assumption. Hence $\Psi$ is
$\beta$-integrable. Define
\[ v= \int_G   g(x^{-1}) r(x)f\,  dx \in Y,\]
which clearly satisfies 
\begin{equation}
\nor{v}_Y \leq \int_G \abs{g(x^{-1})}\nor{r(x)f}_{Y} .\label{eq:48}
\end{equation}
Recall that $C_c(G)\subset Y^\#$ and take $\varphi\in C_c(G)$. Then
by~\eqref{eq:50} with $q_i(f)=\nor{f}$ ($Y$ is normed)
\begin{align*}
\int_G\abs{v(y)}\abs{\varphi(y)}\,dy=
  \int_G \abs{g(x^{-1})}\left(\int_G \abs{f(yx) \varphi(y)}
  dy\right) dx 
  & \leq C \int_G \abs{g(x^{-1})}\nor{r(x)f}_Y\, dx, 
\end{align*}
which is finite by assumption.  By Fubini theorem, the function 
\[ (x,y)\mapsto g(x^{-1})f(yx) \overline{\varphi(y)}\]
is in $L^1(G\times G)$ and, hence, there exists a negligible set $N_\varphi$
such that for all $y\not\in N_\varphi$ the function 
\[ x\mapsto g(x^{-1})f(yx) \overline{\varphi(y)} \]
is in $L^1(G)$. Put $E_\varphi=\set{x\in G\mid \varphi(x)\neq 0}$.  By
the change of variable $x\mapsto y^{-1}x$,
 for all $y\in E_\varphi\setminus N_\varphi $, 
the function 
\[ x\mapsto f(x) g(x^{-1}y)\]
is in $L^1(G)$. Take a countable family  $\set{\varphi_k}_{k\in\N}$ such that
$\bigcup_k E_{\varphi_k}=G$ and set $N=\bigcup_k N_{\varphi_k}$. Then
$N$ is negligible and  for all $y\notin N$  the map $x\mapsto f(x) g(x^{-1}y)$ is
integrable.  Hence, for all $\varphi\in C_c(G)$, Fubini theorem gives
\begin{align*}
  \int_G  v(y)  \overline{\varphi (y)} dy & =\scal{v}{\varphi}_Y \\
& = \int_G g(x^{-1}) \scal{r(x)f}{\varphi}_Y \,dx\\
& = \int_G g(x^{-1}) \left(\int_G f(yx) \overline{\varphi (y)}\,dy\right)\,dx \\
& = \int_G  \overline{\varphi (y)}\left(\int_G f(yx) g(x^{-1})\,dx\right) \,dy
\end{align*}
where $\scal{\cdot}{\cdot}_Y$ denotes the duality between $Y$ and
$Y^\#\subset C_c(G)$ introduced in~\eqref{eq:9}.
By the change of variable $x\mapsto y^{-1}x $ in the inner integral, we get
\begin{equation}
\int_G  v(y)  \overline{\varphi (y)} dy = \int_G   \left(\int_G  f(x) g(x^{-1}y)dx\right)
\overline{\varphi(y)} dy.\label{eq:49}
\end{equation}
This means that $j(f)$ and $g$ are
convolvable, $j(v)=j(f)\con g$ and, by~\eqref{eq:48}, the inequality 
$\nor{f \con g}_Y\leq \int_G \abs{g(x^{-1})}\nor{r(x)f}_{Y}$ holds true.
\end{proof}

\begin{cor}\label{MYclosed}
Take a weight $w$  such that
$\norop{r(x)}{Y} \leq w(x)$ for all $x\in G$. Then:
\begin{enumerate}[a)]
\item  For all $f\in Y$ and $\widecheck{g}\in \Luw$, $j(f)$ and $g$ are convolvable,
  there exists $f\con g\in Y$ such that $j(f\con g)=j(f)\con g$, the
  map
  \[ Y\ni f\mapsto f\con g\in Y \] is continuous and $\nor{ f\con
    g}_Y\leq \nor{f}_Y \nor{g}_{1,w}$. 
\item The set
  \[ \mathcal M^Y=\set{f\in Y \mid f\con g = f},\] is a closed
  $\ell$-invariant subspace of $Y$.
\end{enumerate}
\end{cor}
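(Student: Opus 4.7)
For part (a), the plan is to simply feed the submultiplicativity estimate $\nor{r(x)f}_Y\leq w(x)\nor{f}_Y$ into the abstract integrability hypothesis~\eqref{eq:47} of Proposition~\ref{prop4}. Since
\[
\int_G \abs{g(x^{-1})}\nor{r(x)f}_Y\,dx \leq \nor{f}_Y \int_G \abs{g(x^{-1})}w(x)\,dx = \nor{f}_Y\,\nor{\check g}_{1,w},
\]
and $\check g\in\Luw$ by assumption, the condition is met and Proposition~\ref{prop4} produces the element $f\con g\in Y$ with $j(f\con g)=j(f)\con g$ together with the norm bound. Linearity of the convolution in $f$ combined with this estimate yields that $f\mapsto f\con g$ is a bounded (hence continuous) linear operator on $Y$.

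For part (b), I would package the three required properties of $\mathcal M^Y$ as follows. The fact that $\mathcal M^Y$ is a linear subspace is immediate, because $\mathcal M^Y$ is the kernel of the linear operator $f\mapsto f - f\con g$ on $Y$. Closedness then follows from part (a): this operator is bounded on $Y$, so its kernel is closed in $Y$. For $\ell$-invariance, the idea is to use that left translation commutes with right convolution. Given $f\in\mathcal M^Y$ and $x\in G$, I would apply~\eqref{eq:53} and equation~\eqref{eq:27} in the appendix to get
\[
j(\ell(x)f)\con g = \la(x) j(f)\con g = \la(x)\bigl(j(f)\con g\bigr) = \la(x) j(f) = j(\ell(x)f),
\]
and then invoke the uniqueness clause $j(h\con g)=j(h)\con g$ from part (a), combined with the injectivity of $j$, to conclude that $\ell(x)f\con g = \ell(x)f$, i.e.\ $\ell(x)f\in\mathcal M^Y$.

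Neither step presents a real obstacle since everything reduces to Proposition~\ref{prop4} and the standard intertwining of $\la$ with convolution. The only minor point worth double-checking is the norm bound: Proposition~\ref{prop4} naturally produces $\nor{\check g}_{1,w}$ on the right-hand side rather than $\nor{g}_{1,w}$, so a brief remark is in order to reconcile this with the statement (either noting that this is the relevant quantity under the standing hypothesis $\check g\in\Luw$, or using any available symmetry of $w$ to identify the two norms).
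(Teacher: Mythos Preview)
Your proof of part (a) is identical to the paper's: both reduce to Proposition~\ref{prop4} via the estimate $\int_G\abs{g(x^{-1})}\nor{r(x)f}_Y\,dx\leq\nor{f}_Y\nor{\check g}_{1,w}$. Your observation about $\nor{\check g}_{1,w}$ versus $\nor{g}_{1,w}$ is well taken; the paper's own proof also yields $\nor{\check g}_{1,w}$, so the discrepancy with the stated bound is already present there.

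For part (b), your argument for $\ell$-invariance matches the paper's exactly. Your closedness argument, however, is genuinely simpler than the paper's. You observe that $\mathcal M^Y$ is the kernel of the continuous linear operator $f\mapsto f-f\con g$ on the Banach space $Y$, which settles closedness in one line. The paper instead argues sequentially: given $f_n\to f$ in $Y$ with $f_n\in\mathcal M^Y$, it passes to subsequences converging almost everywhere (using that $j:Y\to\Lloc$ is continuous and invoking Lemma~\ref{luno} to extract a.e.\ convergence from $\Lloc$-convergence of $j(f_n)\con g\to j(f)\con g$), and then identifies the limit pointwise. Your route avoids this machinery entirely; the paper's approach is closer in spirit to the argument used for Fr\'echet target spaces in Proposition~\ref{uno}, where one cannot simply appeal to a bounded operator on a Banach space.
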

\begin{proof} Item a) follows from Proposition~\ref{prop4}, observing that
  ~\eqref{eq:48} is satisfied and
  \[\int_G \abs{g(x^{-1})}
  \nor{r(x)f}_{Y} \,dx\leq \nor{f}_Y \nor{\check{g}}_{1,w} .
  \]
As for b), since $Y$ is a metrizable topological  space, it is sufficient to prove that
$\mathcal M^Y$ is sequentially closed.  Take a sequence $(f_n)_n$  in
$\mathcal M^Y$ converging to $f\in Y$. Possibly passing to a 
subsequence, we can assume that there exists a negligible set $N$ such
that  for all $x\notin N$ the sequence $(f_n(x))_n$ converges to $f(x)$.
Furthermore, possibly changing $N$, we can also assume that, for all
$n$ and $x\notin N$,  $j(f_n)\con g\,(x)=f_n(x)$.

Since $f\mapsto f\con g$  and $j$ are continuous, $j(f_n)\con g$ converges to $j(f) \con g$
in $L^1_{\rm loc}(G)$. Hence, by
Lemma~\ref{luno} in the appendix, possibly passing again to a subsequence and again redefining $N$,
we can also assume that for all  $x\notin N$  $\lim_n j(f_n)\con
g\,(x)= j(f)\con g\,(x)$. Then
\[j(f) \con g\,(x)= \lim_n j(f_n)\con g\,(x)=\lim_n f_n(x)=f(x),\]
so that $j(f)\con g=j(f)$ in $\Lz$, that is $f\in\mathcal M^Y$.
Finally, given $x\in G$ and $f\in\mathcal M^Y$, by~\eqref{eq:27} ,
\[ j(\ell(x)f)=\la(x) j(f)=\la(x)(j(f)\con g)= \la(x)j(f)\con g= j(\ell(x)f)\con g ,\]
 which means that $\ell(x)f\in\mathcal M^Y$.
\end{proof}
We apply the above corollary with the choice $g=K$, which is in $\Luw$
by assumption, together with $\widecheck K=\overline K$.  
Notice that, although Assumption~\ref{H5} is not satisfied,  b) of Corollary~\ref{MYclosed} guarantees that ${\mathcal M}^Y$ is a closed subspace of $Y$.  Furthermore we
assume that 
\begin{equation}
  \label{eq:55}
  \mathcal M^Y\subset \Liw.
\end{equation}
Since by construction $V_2v\in\Luw$ for all $v\in\Ss$ and $\Liw=\Luw^\#$,
~\eqref{eq:55} implies Assumption~\ref{H6}.  Hence we can define
\begin{align*}
& \Co{Y}  = \set{T\in \Ss'_w \mid V_e T\in Y},  \\
 & \nor{T}_{\Co{Y}}  = \nor{V_eT}_Y.
\end{align*}
The inclusion~\eqref{eq:55} is satisfied by all the classical Banach spaces considered
in \cite{fegr89a}. This fact is shown in the proof of Proposition~4.3.

\begin{thm}
  The
 space $\Co{Y}$ is a $\pi$-invariant Banach space and  the restriction of $V_e$ to
 $\Co{Y}$ is an isometry from $\Co{Y}$ onto $\mathcal M^Y$. For all
 $\Phi\in \mathcal M^Y$, $\pi(\Phi)u$ exists  in $\Ss'_w$, it actually belongs to
 $\Co{Y}$ and satisfies
 \begin{subequations}
   \begin{align}
     \label{eq:56}
     &\pi(V_eT)u  =T, \qquad T\in\Co{Y} \\
     &V_e\pi(\Phi)u  =\Phi, \qquad\Phi\in\mathcal
     M^Y. \label{eq:57}
   \end{align}
 \end{subequations}
\end{thm}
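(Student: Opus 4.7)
The plan is to adapt the proof of Proposition~\ref{prop6} to the present setting, in which Assumption~\ref{H5} is not assumed but is replaced by Corollary~\ref{MYclosed} and the hypothesis $\mathcal M^Y\subset\Liw$. I would first observe that, as remarked immediately after Theorem~\ref{nonirreducible}, Assumptions~\ref{H1}$\div$\ref{H4} all hold in this standard $\Luw$ setup, and in particular Assumption~\ref{H4} gives $V_eT\con K=V_eT$ for every $T\in\Ss'_w$. Combined with the definition of $\Co{Y}$, this yields $V_eT\in\mathcal M^Y$ for all $T\in\Co{Y}$, so the restriction of $V_e$ maps $\Co{Y}$ into $\mathcal M^Y$, is isometric by the very definition~\eqref{eq:23} of the $\Co{Y}$-norm, and is injective as a restriction of the globally injective map $V_e:\Ss'_w\to C(G)$ established in Theorem~\ref{nonirreducible}(f).

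Next I would prove surjectivity onto $\mathcal M^Y$ together with the existence and properties of $\pi(\Phi)u$. For any $\Phi\in\mathcal M^Y$, the hypothesis~\eqref{eq:55} gives $\Phi\in\Liw$, so Theorem~\ref{nonirreducible}(g) applies: the Fourier transform $\pi(\Phi)u$ exists in $\Ss'_w$ and $V_e\pi(\Phi)u=\Phi\con K$. Since $\Phi\in\mathcal M^Y$, the right-hand side equals $\Phi\in Y$, so $\pi(\Phi)u\in\Co{Y}$ and \eqref{eq:57} follows at once. Relation~\eqref{eq:56} is then an immediate consequence: for $T\in\Co{Y}$ one has $V_eT\in\mathcal M^Y\subset\Liw$, hence $\pi(V_eT)u$ exists in $\Co{Y}$ by what we have just shown and satisfies $V_e\pi(V_eT)u=V_eT$, so injectivity of $V_e$ yields $\pi(V_eT)u=T$.

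It remains to verify that $\Co{Y}$ is a Banach space and is $\tr{\tau}$-invariant. By Corollary~\ref{MYclosed}(b), $\mathcal M^Y$ is a closed subspace of the Banach space $Y$, hence itself a Banach space; since $V_e:\Co{Y}\to\mathcal M^Y$ is an isometric bijection, $\Co{Y}$ inherits the Banach space structure. For the invariance, the intertwining $V_e\tr{\tau}(x)=\la(x)V_e$ proved in Theorem~\ref{nonirreducible}(f) together with the $\la$-invariance of $Y$ gives, for $T\in\Co{Y}$, that $V_e\tr{\tau}(x)T=\la(x)V_eT\in Y$, so that $\tr{\tau}(x)T\in\Co{Y}$.

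The main obstacle is purely conceptual: we must deduce that $V_eT\in\mathcal M^Y$ for every $T\in\Co{Y}$ without having Assumption~\ref{H5} available. This is precisely where Assumption~\ref{H4}, already guaranteed by the standard $L^1_w$ framework, does the work that in the general Banach setting would otherwise require $K\in Y^\#$; once this step is in place, the rest of the argument is a clean bookkeeping exercise based on Theorem~\ref{nonirreducible} and Corollary~\ref{MYclosed}.
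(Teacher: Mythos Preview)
Your proof is correct and follows the same route as the paper, which simply writes ``Apply Proposition~\ref{cuno}''; you have unpacked that terse reference into its constituent steps (with $\E=\Ss_w$ and the role of $\F$ effectively played by $\mathcal M^Y$), and your explicit remark that Assumption~\ref{H5} is unnecessary because Corollary~\ref{MYclosed} together with~\eqref{eq:55} supply what is needed is a useful clarification the paper leaves implicit.
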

\begin{proof}
Apply Proposition~\ref{cuno}.
\end{proof}

\vskip1truecm
\subsection{Completeness and  weights bounded from below}~\\

Take a continuous function $w:G\to (0,+\infty)$ satisfying only
~\eqref{eq:33}. Take a square-integrable representation $\pi$ acting on $\hh$ and fix an
admissible vector $u\in\hh$ such that
\[
 K(\cdot)=\scal{u}{\pi(\cdot)u}_\hh \in L^1_w(G) ,
\] 
and set
\begin{align*} &{\Ss_w} =\set{v\in\hh\mid \scal{v}{\pi(\cdot)u}_\hh \in L^1_w(G)}\\
&\nor{v}_{\Ss_w}= \int_G \abs{\scal{v}{\pi(x)u}_\hh}w(x) dx.
\end{align*}

\begin{thm}\label{Sbanach}
The space ${\Ss_w}$ is a Banach space if and only if 
\begin{equation}
  \label{q:1}
  \inf_{x\in G} w(x)>0. 
\end{equation}
\end{thm}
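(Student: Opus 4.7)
The plan is to prove the two implications separately, using the fact that $V_2$ is an isometry on $\hh$ (by admissibility), so that the map $V_0=V_2|_{\Ss_w}$ is an isometric embedding of $\Ss_w$ into $L^1_w(G)$.

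\emph{Sufficiency: $\inf w>0 \Rightarrow \Ss_w$ complete.} Set $c:=\inf_{x\in G}w(x)>0$. Then $L^1_w(G)$ embeds continuously into $L^1(G)$ with $\nor{f}_1\leq c^{-1}\nor{f}_{1,w}$. Given a Cauchy sequence $(v_n)$ in $\Ss_w$, the sequence $V_2v_n$ is Cauchy in $L^1_w(G)$, hence converges there to some $g$, and therefore also in $L^1(G)$. Since $K$ is bounded (being a matrix coefficient of a unitary representation), convolution with $K$ is continuous from $L^1(G)$ into $C(G)\cap L^\infty(G)$, so $V_2v_n\con K\to g\con K$ uniformly. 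By the reproducing formula in Proposition~\ref{tre} we have $V_2v_n\con K=V_2v_n\to g$ in $L^1(G)$, whence $g=g\con K$ almost everywhere. Choosing $g\con K$ as representative, $g$ is bounded and continuous; combined with $g\in L^1(G)$ this gives $g\in L^1(G)\cap L^\infty(G)\subset L^2(G)$, and the identity $g=g\con K$ places $g$ in $\mathcal M^2=V_2\hh$. Hence $g=V_2v$ for some $v\in\hh$, and since $V_2v=g\in L^1_w(G)$ we have $v\in\Ss_w$ with $\nor{v_n-v}_{\Ss_w}=\nor{V_2v_n-g}_{1,w}\to0$.

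\emph{Necessity: $\inf w=0 \Rightarrow \Ss_w$ not complete.} Choose $x_n\in G$ with $w(x_n)\to0$ and set $u_n=\pi(x_n)u$, so that $V_2u_n=\la(x_n)K$. By left invariance of $\beta$ and the submultiplicativity~\eqref{eq:33},
\[
\nor{u_n}_{\Ss_w}=\int_G|K(x_n^{-1}y)|w(y)\,dy=\int_G|K(z)|w(x_nz)\,dz\leq w(x_n)\nor{K}_{1,w}\longrightarrow0.
\]
In particular $u_n\in\Ss_w$, yet $\nor{u_n}_\hh=\nor{u}_\hh>0$ remains constant. Hence the natural set-theoretic inclusion $\iota:\Ss_w\to\hh$ is not continuous. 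On the other hand, $\iota$ has closed graph: convergence in either of the two norms forces convergence of $V_2(\cdot)$ in $L^0(G)$, and injectivity of $V_2$ (Proposition~\ref{tre}) guarantees that the two limits must coincide. If $\Ss_w$ were a Banach space, the closed graph theorem would force $\iota$ to be continuous, a contradiction.

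The delicate part is the sufficiency direction, specifically the bootstrap $L^1_w\rightsquigarrow L^1\rightsquigarrow L^\infty\rightsquigarrow L^2$ that allows the $L^1_w$-limit $g$ to be recognized as an element of $\mathcal M^2=V_2\hh$. This step uses only boundedness of $K$ (automatic for a matrix coefficient) and crucially relies on $c>0$ to transfer convergence from $L^1_w$ to $L^1$; without this uniform lower bound on $w$ the argument breaks down, which is consistent with the failure exhibited in the necessity half.
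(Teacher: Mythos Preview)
Your proof is correct. Both directions work, but they differ in execution from the paper's argument.

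For sufficiency, the paper simply rescales $w$ so that $w\geq 1$ and then invokes Theorem~\ref{nonirreducible}\,a), which ultimately rests on Theorem~\ref{tuno}: $V_0$ identifies $\Ss_w$ isometrically with the closed subspace $\mathcal M^1\subset L^1_w(G)$, hence $\Ss_w$ is Banach. Your argument instead runs an explicit bootstrap $L^1_w\hookrightarrow L^1\xrightarrow{\;*K\;}L^\infty$, then $L^1\cap L^\infty\subset L^2$, so the limit $g$ lands in $\mathcal M^2=V_2\hh$. This is more self-contained and avoids the general machinery, at the cost of a few extra lines; the paper's version is shorter but relies on the surjectivity $V_0\Ss_w=\mathcal M^1$ already established in the $L^1$ theory.

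For necessity, the paper introduces the auxiliary space $\Ss^*_w$ with norm $\max\{\nor{\cdot}_{\Ss_w},\nor{\cdot}_\hh\}$, checks it is complete, and applies the open mapping theorem to the identity $\Ss^*_w\to\Ss_w$ to get $c\nor{v}_\hh\leq\nor{v}_{\Ss_w}$; evaluating on $\pi(x)v$ then forces $\inf w\geq c$. Your version is the contrapositive via the closed graph theorem applied directly to $\iota:\Ss_w\to\hh$, using the sequence $\pi(x_n)u$ to witness discontinuity. The two arguments are equivalent (the closed-graph check for $\iota$ is exactly the completeness check for $\Ss^*_w$), but yours dispenses with the auxiliary space and is a bit more streamlined. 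One small remark: your phrase ``convergence in either norm forces convergence of $V_2(\cdot)$ in $L^0(G)$'' is slightly loose when $w$ is not bounded below, but what you actually need---a.e.\ convergence along a subsequence, obtained from $wV_2v_n\to wV_2v$ in $L^1$ and $w>0$---is available and suffices for the closed-graph argument.
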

\begin{proof}
Assume that $\inf_{x\in G} w(x)\geq c>0$.  We can always suppose that
$c=1$  so that~\eqref{eq:46} holds true. Indeed, if $c<1$, we redefine $w$ as $w/c$, 
so that
\[\frac{w(xy)}{c}\leq \frac{w(x)w(y)}{c^2} c\leq \frac{w(x)}{c}\frac{w(y)}{c}\]
and $w/c$ satisfies~\eqref{eq:33}. Since $L^1_w(G)=L^1_{w/c}(G)$
with equivalent norms, clearly the fact that  ${\Ss_w}$ is a Banach space
does not depend on the choice of $c$.   Item a) of
Theorem~\ref{nonirreducible} states that ${\Ss_w}$ is  a Banach
space.

Assume that  ${\Ss_w}$
is a Banach space.  Define ${\Ss^*_w}$ as the vector space ${\Ss_w}$ with the
norm
\[
\nor{v}_*= \max\set{\nor{v}_{\Ss_w},\nor{v}_\hh}.
\]
We claim that ${\Ss^*_w}$ is complete.  Take a Cauchy sequence $(v_n)_n$
with respect to $\nor{\cdot}_*$. By construction, it is a Cauchy sequence
also with respect to both $\nor{\cdot}_{\Ss_w}$ and $\nor{\cdot}_\hh$. Since ${\Ss_w}$
and $\hh$ are complete, there exist $v'\in{\Ss_w}$ and $v\in\hh$ such that 
\[ 
\lim_{n\to+\infty} \nor{v_n-v}_\hh=0 \qquad \lim_{n\to+\infty} \nor{v_n-v'}_{\Ss_w}=0.
\]
Since the voice trasform is an isometry both from
$\hh$ into $L^2(G)$ and from ${\Ss_w}$ into $L^1_w(G)$,
the sequence $(Vv_n)_n$ converges to $Vv$ in $L^2(G)$ and to $Vv'$
in $L^1_w(G)$. Hence, possibly passing to a subsequence,  $(Vv_n)_n$
converges  almost everywhere to $Vv$ and  to $Vv'$. Since $w>0$, 
$Vv=Vv'$ almost everywhere and, hence, 
$v=v'$ by the injectivity of $V$, so that $v\in{\Ss^*_w}$. Furthermore
\begin{align*}
\lim_{n\to+\infty} \nor{v_n-v}_*&= \lim_{n\to+\infty}\max\set{\nor{v_n-v}_{\Ss_w},\nor{v_n-v}_\hh}\\ 
&= \max\set{\lim_{n\to+\infty}\nor{v_n-v}_{\Ss_w},\lim_{n\to+\infty}\nor{v_n-v}_\hh}=0.
\end{align*}
Hence ${\Ss^*_w}$ is complete and the natural inclusion $i:{\Ss^*_w}\to{\Ss_w}$ is
clearly continuous and  bijective. Since ${\Ss_w}$ is a Banach space, the open mapping theorem implies that
the inverse is also continuous, so that  there is a constant $c>0$ such
that
\[
c\nor{v}_* \leq \nor{v}_{\Ss_w} \leq \nor{v}_*.
\]
As usual, for all $x\in G$ and $v\in{\Ss_w}$
\[\nor{\pi(x)v}_{\Ss_w}= \nor{\la(x)Vv}_{L^1_w(G)}\leq w(x)\nor{v}_{\Ss_w},
\qquad \nor{\pi(x)v}_\hh=\nor{v}_\hh.\]
Then, if $v\neq 0$, for all $x\in G$
\[ c \nor{v}_\hh =c \nor{\pi(x)v}_\hh \leq
c\nor{\pi(x)v}_* \leq \nor{\pi(x)v}_{\Ss_w}\leq w(x)\nor{v}_{\Ss_w}\]
taking the infimum over $G$, we get 
\[0<c\leq \inf_{x\in G} w(x).\]
\end{proof}

\section{Appendix: some functional analysis}

\subsection{Notation}\label{Notation}

The set $\Lz$ is a metrizable complete topological vector space, and 
$C_0(G)$ is dense in $\Lz$
(Propositions~19 and ~20 Chapter IV.5.11 of \cite{BINT1}). Since $G$
is second countable,  $C_0(G)$ is separable (with respect to the
convergence on compact subsets, hence with respect to the convergence in measure)
and $\Lz$ is separable, too. Furthermore, 
if $(f_n)$ is a sequence converging to $f$  in $\Lz$,
then there exist a subsequence  $(f_{n_k})_k$ and a negligible set $N\subset G$ such
that
\begin{equation}
  \label{eq:25}
  \lim_{k\to+\infty}  f_{n_k}(x)= f(x)\qquad \text{for all  } x\in
  G\setminus N.
\end{equation}
(Corollary~of Proposition~19 Chapter IV.5.11 of
\cite{BINT1}).

If $G$ is compact $\Lloc=L^1(G)$ is a separable Banach space. Otherwise,
a   saturated fundamental system of semi-norms is given as  follows (recall that a family is saturated if the maximum of any finite set of seminorms is in the family). 
Since $G$ is second countable, take a countable increasing family $({\mathcal K}_i)_{i\in\N}$ of compact
  subsets of $G$ such that ${\mathcal K}_i\subset \ring{{\mathcal K}}_{i+1}$ and $\bigcup_i
  {\mathcal K}_i=G$. For all $i\in\N$ put
  \begin{equation}
    q_i(f)= \int_{{\mathcal K}_i} \abs{f(x)}dx.\label{eq:10}
  \end{equation}
  Then, for each compact set ${\mathcal K}$ there exists $i\in\N$ such that ${\mathcal K}\subset
  {\mathcal K}_i$ and
  \[ \int_{\mathcal K} \abs{f(x)}dx \leq q_i(f).\]
With the induced topology, $\Lloc$ is complete (see Ex.~31 Chapter~V.5 of
\cite{BINT1}), hence it is a Fr\'echet space.

\begin{lem}\label{luno}
If $(f_n)$ is a sequence in $\Lloc$ converging to $f$ in $\Lloc$, then there exists
a subsequence  $(f_{n_k})_k$ that converges to $f$
 almost everywhere.
\end{lem}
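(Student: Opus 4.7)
The plan is to combine the classical $L^1$-to-almost-everywhere subsequence principle with a Cantor diagonal extraction. Recall from~\eqref{eq:10} that the topology of $\Lloc$ is generated by the saturated countable family of semi-norms $q_i(g)=\int_{\mathcal K_i}|g(x)|\,dx$, where $(\mathcal K_i)_{i\in\N}$ is an increasing exhaustion of $G$ by compact sets. Consequently, $f_n\to f$ in $\Lloc$ is equivalent to saying that $f_n|_{\mathcal K_i}\to f|_{\mathcal K_i}$ in $L^1(\mathcal K_i)$ for every $i\in\N$.

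First I would peel off one index at a time. For $i=1$, the convergence $f_n|_{\mathcal K_1}\to f|_{\mathcal K_1}$ in $L^1(\mathcal K_1)$ yields, by the well-known measure-theoretic fact that $L^1$-convergence forces a.e.\ convergence along some subsequence, a subsequence $(f_{m^{(1)}_k})_k$ converging to $f$ almost everywhere on $\mathcal K_1$. Applying the same principle to the restriction of $(f_{m^{(1)}_k})_k$ to $\mathcal K_2$ produces a further subsequence $(f_{m^{(2)}_k})_k$ converging almost everywhere on $\mathcal K_2$, and so on: by induction, for every $i\in\N$ one obtains a sequence $(f_{m^{(i)}_k})_k$, which is a subsequence of $(f_{m^{(i-1)}_k})_k$, converging to $f$ almost everywhere on $\mathcal K_i$. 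Setting $n_k=m^{(k)}_k$, the diagonal sequence $(f_{n_k})_k$ has the property that, for each fixed $i$, the tail $(f_{n_k})_{k\geq i}$ is extracted from $(f_{m^{(i)}_k})_k$, hence $f_{n_k}\to f$ almost everywhere on $\mathcal K_i$.

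To conclude, denote by $N_i\subset\mathcal K_i$ the negligible set where the pointwise convergence fails on $\mathcal K_i$, and put $N=\bigcup_{i\in\N}N_i$; as a countable union of $\beta$-negligible sets, $N$ is still negligible. Since $G=\bigcup_{i\in\N}\mathcal K_i$, every $x\in G\setminus N$ lies in some $\mathcal K_i$, so $f_{n_k}(x)\to f(x)$, as desired. No real obstacle arises: the only moderately delicate point is keeping track of the fact that the diagonal sequence eventually lies inside every pre-chosen subsequence, which is just the standard bookkeeping of the Cantor diagonal argument.
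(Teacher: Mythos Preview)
Your proof is correct and follows essentially the same route as the paper: inductive extraction of nested subsequences converging a.e.\ on each $\mathcal K_i$ using the $L^1$-to-a.e.\ subsequence principle, followed by the Cantor diagonal and a countable union of null sets. The only cosmetic difference is that the paper records the null sets $N_i$ at each inductive step rather than after forming the diagonal, but the argument is the same.
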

\begin{proof}
If $G$ is compact, the claim is clear. If not, take  the
increasing sequence of compact subsets $(K_i)_{i\in\N}$ defining 
the fundamental family of semi-norms~\eqref{eq:10}.  The topology of
$\Lloc$ is such that $(f_n)_n$ converges to
$f$ in $L^1(K_i)$ for all $i\in\N$. We procede by induction on $\N$. Suppose that we have found a subsequence
$(f_{n^{(i)}_k})_k$ and  a negligible subset $N_i\subset K_i$ such that
\[ \lim_{k\to+\infty}  f_{n^{(i)}_k}(x)= f(x)\qquad \text{for all } x\in K_i\setminus
N_i.\]
Clearly $(f_{n^{(i)}_k})_k$ converges to $f$ in $L^1(K_{i+1})$ and we
can further extract a subsequence $(f_{n^{(i+1)}_k})_k$ for which there exists a negligible subset $N_{i+1}\subset
K_{i+1}$ such that 
\[ \lim_{k\to+\infty}  f_{n^{(i+1)}_k}(x)= f(x)\qquad \text{for all } x\in K_{i+1}\setminus
N_{i+1}.\] 
Set $N=\bigcup_{i\in\N} N_i$ and $f_{n_k}= f_{n^{(k)}_k}$.  Given $x\notin N$, fix ${h}$ such that $x\in
K_{h}$ , so that  $x\in K_i \setminus
N_i$ for all $i\geq{h}$. Then $(f_{n^{(k)}_k}(x))_{k\geq{h}}$ is a subsequence
of $(f_{n^{({h})}_k}(x))_{k\geq{h}}$ which converges to $f(x)$.
\end{proof}

Given $f\in L^0(G)$, $\check{f}\in\Lz $ since a subset $E\subset G$ is negligible if
and only if $E^{-1}$ is negligible. Notice that 
\begin{align}
  \label{eq:26}
&   \check{f}\in L^p(G)\qquad\Longleftrightarrow \qquad
  \Delta^{-1/p} f\in L^p(G) \qquad\Longleftrightarrow \qquad
  f\in L^p(G,\Delta^{-1}\cdot\beta) \\
& \nor{\check{f}}_p= \nor{\Delta^{-1/p} f}_p.
\end{align}
Since $\Delta$ is continuous, $\Lloc$ is invariant under the map
$f\mapsto \check{f}$. 

\subsection{Representations}\label{Reps}
Let $E$ be a locally convex space with a saturated fundamental system $\set{q_i}_i$ of semi-norms
and $\tau$ a (linear) representation of $G$ on $E$. 
\begin{enumerate}[i)]
\item\label{sepcontrep} The representation $\tau$ is separately continuous if 
  \begin{enumerate}[a)]
  \item for all $x\in G$, $\tau(x)$ is continuous from $E$ to $E$;
   \item for all $v\in E$, $x\mapsto \tau(x)v$ is continuous from $G$ into $E$.
  \end{enumerate}
\item\label{contrep} The representation $\tau$ is  continuous if 
\begin{enumerate}[a)]
  \item if $(x,v)\mapsto \tau(x)v$ is continuous from $G\times E$ into $E$.
  \end{enumerate}
\item The representation $\tau$ is  equicontinuous if 
\begin{enumerate}[a)]
  \item if $(x,v)\mapsto \tau(x)v$ is continuous from $G\times E$ into $E$;
   \item $\tau(G)$ is equicontinuous, {\em i.e.} for every semi-norm
     $q_i$ there exists a semi-norm $q_j$ and
     a constant~$C$ such that $q_i(\tau(x))\leq C q_j(\tau(x))$ for all $x\in G$.
  \end{enumerate}
\end{enumerate}

If $E$ is a Fr\'echet space, then \ref{sepcontrep}) implies \ref{contrep}) 
 (Proposition~1 Chapter VIII.2.1 of
\cite{BINT1}). Furthermore, $\tau$ is continuous if and only if for
any compact set $Q$ of $G$, $\tau(Q)$ is equicontinuous and the map
$x\mapsto \tau(x)v$ is continuous for all $v\in D$, where $D$ is a
total set in $E$ (Remark 2 of Definition~1 Chapter VIII.2.1 of
\cite{BINT1}).

\subsection{Convolutions}\label{convolution}

The basic property of convolution is given by the following lemma.
\begin{lem}
If $f\con g$ exists, it  is
a $\beta$-measurable function whose  equivalence class in $\Lz$ depends only on the equivalence classes
of $f$ and $g$.
\end{lem}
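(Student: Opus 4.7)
The plan is to reduce both claims to Tonelli--Fubini applied to the function $h(x,y)=f(y)g(y^{-1}x)$ on $G\times G$, viewing the existence of $f\con g$ as an integrability condition on the sections $h(x,\cdot)$.

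First I would establish joint $\beta\otimes\beta$-measurability of $h$. Since $G$ is locally compact and second countable, hence Polish, every element of $L^0(G)$ admits a Borel representative; fix one for $f$ and one for $g$. The map $\Phi\colon G\times G\to G$, $\Phi(x,y)=y^{-1}x$, is continuous, so $g\circ\Phi$ is Borel-measurable on $G\times G$. Multiplying by the Borel function $(x,y)\mapsto f(y)$ yields joint Borel measurability of $h$. Tonelli applied to $|h|$ shows that the set $E=\{x\in G:y\mapsto h(x,y)\in L^1(G)\}$ is $\beta$-measurable, and Fubini gives that $x\mapsto\int_G h(x,y)\,dy$ is $\beta$-measurable on $E$. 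By hypothesis $f\con g$ exists, i.e.\ $E^c$ is negligible, and extending the integral by $0$ (or any value) off $E$ produces a $\beta$-measurable representative of $f\con g$ in $L^0(G)$.

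Next I would verify that the resulting class depends only on the classes of $f$ and $g$. Suppose $f'=f$ off a negligible set $N_f$ and $g'=g$ off a negligible set $N_g$. For each fixed $x\in G$,
\[
\{y\in G:f'(y)g'(y^{-1}x)\neq f(y)g(y^{-1}x)\}\subset N_f\cup\{y:y^{-1}x\in N_g\}=N_f\cup xN_g^{-1}.
\]
By left-invariance of $\beta$, $\beta(xN_g^{-1})=\beta(N_g^{-1})$, and by the change of variable recalled in~\eqref{eq:26} one has $\beta(N_g^{-1})=\int_G\mathbf 1_{N_g}(y)\,\Delta(y^{-1})\,d\beta(y)=0$ since $N_g$ is negligible. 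Consequently the integrands defining $f\con g(x)$ and $f'\con g'(x)$ agree $\beta$-a.e.\ in $y$ for \emph{every} $x\in G$, so the two convolutions exist on exactly the same set $E$ and coincide pointwise there; in particular their classes in $L^0(G)$ agree.

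The only real technical point is the first step: the composition $g\circ\Phi$ is not automatically $\beta$-measurable if $g$ is only a $\beta$-measurable equivalence class, because the preimage under $\Phi$ of a $\beta$-null set need not be $(\beta\otimes\beta)$-null. The fix is to select a Borel representative at the outset, after which joint Borel measurability of $h$ is automatic and everything else is routine Tonelli--Fubini.
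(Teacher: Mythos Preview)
Your argument is correct and complete. It differs from the paper's in an instructive way. The paper works directly with $\beta$-measurable representatives and proves the two-dimensional fact that the homeomorphism $\psi(x,y)=(x,y^{-1}x)$ of $G\times G$ preserves $\beta\otimes\beta$-null sets; this single lemma yields both the $\beta\otimes\beta$-measurability of $(x,y)\mapsto f(y)g(y^{-1}x)$ and the class-independence in one stroke, after which a Bourbaki version of Tonelli finishes. You instead bypass the two-dimensional null-set calculation by selecting Borel representatives up front, so joint Borel measurability is automatic, and then handle class-independence by a direct pointwise-in-$x$ argument that uses only the one-dimensional fact that $E$ null implies $E^{-1}$ null. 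Your route is slightly more elementary and in fact proves a marginally stronger statement (the two convolutions coincide for \emph{every} $x$ at which one exists, not merely almost every $x$); the paper's route is more uniform and explains conceptually why composing with $\psi$ respects the measure-theoretic structure.
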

\begin{proof}
Without loss of generality, we can suppose that both $f$ and $g$ are
positive.  The topological isomorphism $\psi:G\times G\to G\times G$,
$\psi(x,y)=(x,y^{-1}x)$ has the property that a set 
$E\subset G\times G$ is $\beta\otimes\beta$-negligible if and only if  
$\psi^{-1}(E)$ is $\beta\otimes\beta$-negligible. Indeed, take $E$
a Borel measurable subset of $G\times G$, then 
\begin{align*}
 \beta\otimes\beta( \psi^{-1}(E))=\int_G \beta(\psi^{-1}(E)_x) dx =
 \int_G \beta(xE_x^{-1})dx =  \int_G \beta(E_x^{-1}) dx
\end{align*}
where $E_x=\set{y\in G\mid (x,y)\in E}$ and $\psi^{-1}(E)_x=\set{y\in
  G\mid (x,y^{-1}x)\in E}=xE_x^{-1}$. Hence $\beta\otimes\beta(
\psi^{-1}(E))=0$ if and only if $\beta(E_x^{-1})=0$ for almost all
$x\in G$, which is equivalent to the fact that $\beta(E_x)=0$ for almost all
$x\in G$, {\em i.e.} $\beta\otimes\beta(E)=0$.  As a consequence, the
map $\varphi=(f\otimes g)\psi$ is $\beta\otimes \beta$-measurable, and if we
change $f\otimes g$ on a negligible set, $\varphi$ will
change on a negligible set, too. 
Since $G$ is second countable, the measure $\beta$ is moderated and Proposition~7.b) Chapter
V.8.3 of \cite{BINT1} shows that the map $x\mapsto \int_G
\varphi(x,y)\,dy$ is $\beta$-measurable,  where the integral is finite by assumption. 
Therefore,   $\int_G
\varphi(x,y)dy$ depends
only on the equivalence class of $f$ and $g$.
\end{proof}

If  $f,g\in \Lloc$, $f\con g$ exists and  $\abs{f}\con\abs{g}$
is in $\Lloc$, then we say that $f$ and $g$ are {\it convolvable}.
Since  $f,g\in \Lloc$, then $\mu=f\cdot \beta$ and $\nu=g\cdot \beta$ are (Radon)
measures on $G$.  The fact that $f$ and $g$ are convolvable is equivalent to
the fact that $\mu$ and $\nu$ admit a convolution, {\em i.e.} for all
$\varphi\in C_c(G)$ the function $(x,y)\mapsto \varphi(xy)$ is
$\mu\otimes\nu$-integrable, namely
\[ \int_{G\times G} \abs{\varphi(xy)} \abs{f(x)}\abs{g(x)} dx dy
<+\infty, \qquad\varphi\in C_c(G).\]
The two definitions agree, since:
\begin{enumerate}[i)]
\item if $\mu$ and $\nu$ admit a convolution, the map $\varphi\mapsto \int_G
\varphi(xy)d\mu(x)d\nu(y)$ defines a measure on $G$ whose  density
is precisely $f\con g$ (Proposition 10 Chapter VIII.3.2 of \cite{BINT2} and
Proposition 10 Chapter VIII.4.5 of \cite{BINT2});
\item if  $\abs{f}\con\abs{g}$ exists and is in $\Lloc$, then $\mu$ and $\nu$
  admit a convolution (Proposition 9 Chapter VIII.4.5 of \cite{BINT2}).
\end{enumerate}

We recall the following sufficient conditions.
\begin{subequations}
\begin{enumerate}[a)]
\item Corollary 20.14 of \cite{hero1}: if $f\in L^1(G)$ and $g\in
  L^p(G)$ with $p\in [1,+\infty]$, then $f$ and $g$ are convolvable, $f\con g$ belongs
  to $L^p(G)$ and
  \begin{equation}
    \nor{f\con g}_ p\leq \nor{f}_1 \nor{g}_p\label{eq:3}.
  \end{equation}
  \item Theorem~20.18 of \cite{hero1}: if $f\in L^p(G)$, $g\in L^q(G)$
  and $\widecheck{g}\in L^q(G)$ with $1<p <+\infty$, $1<q<+\infty$ satisfying
  $\frac{1}{p}+\frac{1}{q}=1+\frac{1}{r}$ with $r> 1$, then $f$ and $g$ are convolvable and $f\con
  g$ belongs to $L^r(G)$. Furthermore, if $  \nor{\check{g}}_q=\nor{g}_q$, then 
  \begin{equation}
    \nor{f\con g}_ r \leq \nor{f}_p \nor{g}_q\label{eq:5}.
  \end{equation}
   
\item Theorem 20.16 of \cite{hero1}: under the same assumptions on $f$ and $g$ as in the previous item, if
  $\frac{1}{p}+\frac{1}{q}=1$ with $1<p<+\infty$, then $f$ and $g$ are convolvable and $f*g$ 
  belongs to $ C_0(G)$ and
  \begin{equation}
    \label{eq:6}
    \nor{f\con g}_\infty \leq \nor{f}_p \nor{\widecheck{g}}_q.
  \end{equation}
\item\label{young} Theorem 20.16 of \cite{hero1}: if $f\in L^1(G)$ and $g\in
  L^\infty(G)$ (which is equivalent to $\widecheck{g}\in L^\infty(G)$) or if
  $f\in L^\infty(G)$ and $\widecheck{g}\in L^1(G)$, then $f$ and $g$ are convolvable, $f*g$
 is a bounded continuous function, and
  \begin{equation}
    \label{eq:7}
    \nor{f\con g}_\infty \leq \nor{f}_1 \nor{g}_\infty
    \quad\text{ or } \nor{f\con g}_\infty \leq \nor{f}_\infty \nor{\widecheck{g}}_1
  \end{equation}
\end{enumerate}
\end{subequations}
In general, the convolution is not associative. We recall a sufficient
condition as well as some other useful relations.
\begin{subequations}
  \begin{lem}
Given $f,g\in\Lz$, 
  \begin{equation}
    \label{eq:34}
    \widecheck{f\con g}= \check{g}\con\check{f}
  \end{equation}
and,  for all $x\in G$,
  \begin{align}
    \la(x)f\,\con g & =\la(x)(f\con g) &&
    \rho(x)f\,\con g
    =\Delta(x^{-1})(f\con \la(x^{-1})g) \label{eq:27}\\
    f\,\con \la(x)\,g & =\Delta(x^{-1})
    (\rho(x^{-1})f\con g) && f\con \rho(x)g
    =\rho(x)(f\con g), \label{eq:28}
  \end{align}
provided that one of the two sides of each equality exists.

If $f,g,h\in\Lz$ are such that either $\abs{f}\con\abs{g}$  and
$(\abs{f}\con\abs{g})\con \abs{h}$ exist  or $\abs{g}\con\abs{h}$  and
$\abs{f}\con(\abs{g}\con \abs{h})$ exist, then 
\begin{equation}
  \label{eq:35}
  f\con (g\con h)= (f\con g)\con h
\end{equation}
and all the convolutions exist. 
  \end{lem}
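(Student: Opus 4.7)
The plan is to verify each identity by a direct change of variables in the defining integral of convolution, keeping careful track of the modular function. In all cases, we first note that the changes of variables used are either left translations, which preserve the left Haar measure $\beta$, or right translations, which scale $\beta$ by $\Delta(x^{-1})$. Because these Jacobians are strictly positive, absolute integrability of the integrand is preserved under the substitutions, so if one side of each identity is defined (as an a.e.-convergent integral), so is the other, and they agree.

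For~\eqref{eq:34}, I would simply write
\[
\widecheck{f\con g}(x)=(f\con g)(x^{-1})=\int_G f(y)g(y^{-1}x^{-1})\,dy,
\]
and then substitute $y\mapsto xy$ (a left translation, hence measure-preserving) to rewrite the integral as $\int_G f(xy)g(y^{-1})\,dy=\int_G \check g(y)\check f(y^{-1}x)\,dy=(\check g\con\check f)(x)$. For the first identity in~\eqref{eq:27}, unfolding $\la(x)f\con g$ and then substituting $z\mapsto xz$ gives $(f\con g)(x^{-1}y)=\la(x)(f\con g)(y)$. The identity $f\con\rho(x)g=\rho(x)(f\con g)$ in~\eqref{eq:28} is equally immediate from $g(z^{-1}yx)=(f\con g)(yx)$ with no change of variables at all.

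The two identities that pick up the modular function are $\rho(x)f\con g=\Delta(x^{-1})(f\con\la(x^{-1})g)$ and $f\con\la(x)g=\Delta(x^{-1})(\rho(x^{-1})f\con g)$. For the first one, I would start from $\rho(x)f\con g(y)=\int_G f(zx)g(z^{-1}y)\,dz$ and apply the right-translation rule $\int F(zx)\,dz=\Delta(x^{-1})\int F(z)\,dz$, obtaining $\Delta(x^{-1})\int_G f(z)g(xz^{-1}y)\,dz=\Delta(x^{-1})(f\con\la(x^{-1})g)(y)$. The other mixed identity is dual and is handled the same way; alternatively it can be deduced from the first one combined with the already-proven identities and~\eqref{eq:34}.

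For associativity~\eqref{eq:35}, the argument is Fubini. Under the assumption, say, that $|f|\con|g|$ and $(|f|\con|g|)\con|h|$ exist, the function $(y,z)\mapsto |f(y)||g(y^{-1}z)||h(z^{-1}x)|$ is integrable on $G\times G$ for a.e.\ $x$, since after integrating first in $y$ one recovers $(|f|\con|g|)(z)|h(z^{-1}x)|$, whose integral in $z$ is finite a.e.\ by hypothesis. The symmetric hypothesis is handled identically after first doing the $z$-integration. With absolute integrability in hand, Fubini lets me interchange the order and recover both $(f\con g)\con h\,(x)$ and $f\con(g\con h)\,(x)$ from the same double integral, which simultaneously shows that both convolutions exist a.e.\ and are equal. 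The only mildly delicate point, which I would handle in passing, is the change of variables $z\mapsto yz$ needed to rewrite $(f\con g)\con h$ in the standard form $\int f(y)g(y^{-1}z)h(z^{-1}x)\,dy\,dz$; this is again a left translation and therefore measure-preserving. No step presents a serious obstacle: the main bookkeeping is just to make sure that the modular function is introduced exactly when (and only when) a right translation occurs.
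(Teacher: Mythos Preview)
Your proof is correct and follows essentially the same approach as the paper's: direct change-of-variable computations for the translation identities and~\eqref{eq:34}, and Tonelli/Fubini for associativity. One small slip: in your computation for~\eqref{eq:34}, after the substitution the intermediate integrand should read $f(x^{-1}y)g(y^{-1})$ rather than $f(xy)g(y^{-1})$---your final identification with $(\check g\con\check f)(x)=\int \check g(y)\check f(y^{-1}x)\,dy$ is correct and already uses $\check f(y^{-1}x)=f(x^{-1}y)$, so only the displayed intermediate step needs fixing. Otherwise your write-up is actually more detailed than the paper's, which dismisses \eqref{eq:27}--\eqref{eq:28} as ``standard''.
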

  \begin{proof}
To prove~\eqref{eq:34} just compute
 \[ \check{g}\con\check{f}(x)=\int_G  \check{f}(y^{-1}x)
 \check{g}(y) dy=  \int_G  f(x^{-1}y)
 g(y^{-1})dy =  \int_G  f(y)
 g(y^{-1}x^{-1})dy = f\con g(x^{-1}).\]
Next we prove~\eqref{eq:35}.  Fubini theorem gives that,
for all $x\in X$ 
\begin{align*}
  (\abs{f}\con\abs{g})\con \abs{h}(x) & = \int_{G\times
    G} \abs{f(y)} \abs{g(y^{-1}z)}\abs{h(z^{-1}x)} dydz \\  & =
  \int_{G\times G} \abs{f(y)} \abs{g(z)}\abs{h(z^{-1}y^{-1}x)} dydz= \abs{f}\con(\abs{g}\con \abs{h})(x)
\end{align*}
by the change of variable $z\mapsto yz$.  Hence the two assumptions
implies that the map  $(y,z)\mapsto f(y) g(y^{-1}z)
h(z^{-1}x) $ is in $L^1(G\times G)$. Since  $\abs{f\con g}\leq
\abs{f}\con\abs{g}$, all the convolutions in~\eqref{eq:35}
exist and Fubini theorem implies the claimed equality.
The remaining assertions are standard.
\end{proof}
\end{subequations}

\subsection{Scalar integration}\label{scalar_integration}

Let $\E$ be a locally convex topological vector space, and let $X$ be
a Hausdorff locally compact second countable topological space
with a positive measure $dx$, which is finite on all compact subsets.
A function $ \Psi : X \to \E $ is called scalarly integrable
if the scalar function $ \scal{T}{\Psi(\cdot)}_E $ is integrable for every $ T \in \E' $.
If $\Psi$ is scalarly integrable, the map
$$ T \mapsto \int_X \scal{T}{\Psi(x)}_E dx $$
defines a linear functional on $\E'$, possibly not continuous;
that is, there exists an element in the algebraic dual $\E'^*$,
called the scalar integral of $\Psi$ and denoted
$$ \int_X \Psi(x) dx \in \E'^* , $$
such that
$$ \langle T , \int_X \Psi(x) dx \rangle_E = \int_X \scal{\Psi(x)}{T} _Edx . $$

Usually one is interested to understand under which conditions the scalar integral lies in $E$.
In our paper we often look ar the case in which the argument takes its values in a dual space (or in a space which embeds into a dual space), namely
$$ \Psi : X \longrightarrow E'_s,$$
where  $E'_s$ is the space $E'$ endowed twith the weak*-topology,
namely the topology of simple convergence, so that $(E_s')'=E$.

A locally convex space $\E$ is said to have the property (GDF)\footnote
{The acronym GDF stands for ``graphe d\'enombrablement ferm\'e\,'',
namely ``countably closed graph''.}
if every linear map from $\E$ to a Banach space which has sequentially closed graph is actually continuous
(that is, the closed graph theorem holds true for Banach space-valued linear maps defined on $\E$).
All the Fr\'echet spaces enjoy the property (GDF) (\cite{BTVS}, Chapter I.3.3, Corollary 5).
Also, the dual space of any reflexive Fr\'echet space satisfies the property with 
respect to the strong topology, namely the topology of the convergence on bounded sets
(\cite{BINT1}, Chapter 6, Appendix, n$^\circ$ 2, Proposition 3).

The key theorem for the convergence of scalar integrals with values in a dual vector space is the following.
\begin{thm}[Gelfand--Dunford, \cite{BINT1}, Theorem 1, Chapter VI.1.4] \label{Gelfand-Dunford}
 Let $\E$ be a Hausdorff locally convex topological vector space with the property (GDF).
 Then, for any scalarly integrable function $ \Psi : X \to E'_s $, we have
 $$ \int_X \Psi(x) dx \in E' . $$
\end{thm}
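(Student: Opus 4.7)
The plan is to reduce the claim to the GDF property by factoring the scalar integral through $L^1(X)$. Define the linear map
\[
T : E \longrightarrow L^1(X), \qquad T(v)(x) = \scal{\Psi(x)}{v}_E ,
\]
which is well defined by the assumption of scalar integrability (each $T(v)$ is, in particular, $dx$-measurable and integrable). Then the would-be scalar integral of $\Psi$ is the composition $\omega \circ T$, where $\omega : L^1(X) \to \C$ is the continuous linear form $f \mapsto \int_X f(x)\, dx$. Since $\omega$ is continuous, it suffices to prove that $T$ itself is continuous; at that point, $\int_X \Psi(x)\, dx = \omega \circ T \in E'$ will follow immediately.

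To prove continuity of $T$ I would use the GDF hypothesis: since $L^1(X)$ is a Banach space, it is enough to check that $T$ has a sequentially closed graph. So let $(v_n)_n$ be a sequence in $E$ converging to some $v \in E$ and suppose that $(T(v_n))_n$ converges to some $\varphi \in L^1(X)$. On one hand, extracting a subsequence via Lemma~\ref{luno} applied to $L^1(X)$ (or directly via the standard argument for $L^1$), one may assume $T(v_{n_k})(x) \to \varphi(x)$ for $x$ outside a negligible set $N \subset X$. On the other hand, for each fixed $x \in X$, the functional $\scal{\Psi(x)}{\cdot}_E$ is continuous on $E$ because $\Psi(x) \in E'$; hence
\[
T(v_{n_k})(x) = \scal{\Psi(x)}{v_{n_k}}_E \;\longrightarrow\; \scal{\Psi(x)}{v}_E = T(v)(x)
\qquad \text{for every } x \in X .
\]
Combining the two, $T(v)(x) = \varphi(x)$ for all $x \notin N$, so $T(v) = \varphi$ in $L^1(X)$. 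This shows that the graph of $T$ is sequentially closed.

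With the graph closedness in hand, the GDF property of $E$ forces $T : E \to L^1(X)$ to be continuous, and the composition $\omega \circ T$ yields
\[
v \longmapsto \int_X \scal{\Psi(x)}{v}_E \, dx \;\in\; E' ,
\]
which is exactly the statement $\int_X \Psi(x)\, dx \in E'$.

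The routine parts are the almost-everywhere extraction and the pointwise passage to the limit; the only conceptually delicate step is realizing that the natural map $T$ lands in a Banach space, so that the GDF axiom can be invoked. I do not foresee a genuine obstacle beyond being careful that $\Psi(x) \in E'$ really is continuous on $E$ (not merely weakly continuous), which is automatic by definition of $E'$ as the topological dual.
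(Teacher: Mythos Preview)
The paper does not supply its own proof of this theorem: it is stated as a citation of Bourbaki, Integration, Chapter~VI.1.4, Theorem~1, and is used as a black box throughout. So there is nothing to compare against in the paper itself.

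That said, your argument is correct and is in fact the standard Bourbaki proof. The key idea --- factoring the scalar integral as $\omega\circ T$ with $T:E\to L^1(X)$, $T(v)=\scal{\Psi(\cdot)}{v}_E$, and then applying the GDF hypothesis to $T$ via a sequentially-closed-graph check --- is exactly how the result is obtained in \cite{BINT1}. The closed-graph verification (pointwise convergence from continuity of each $\Psi(x)\in E'$, almost-everywhere convergence from $L^1$-convergence after passing to a subsequence) is clean and complete. One cosmetic remark: you cite Lemma~\ref{luno}, which is written for $\Lloc$ on a group; your parenthetical ``or directly via the standard argument for $L^1$'' is the right way to phrase it here, since $X$ is just a $\sigma$-finite measure space in this appendix. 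Also note that the paper's pairing $\scal{\cdot}{\cdot}_E$ is sesquilinear, so strictly speaking your $T$ is conjugate-linear; this changes nothing in the argument and the conclusion $\int_X\Psi(x)\,dx\in E'$ (identified with the anti-dual) is unaffected.
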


\subsection{Intersections of $L^p$ spaces}

In this final section we recall, for the reader's convenience, the main results
obtained in \cite{damuwe70}, specialized to our setting.
Set $I=(1,+\infty)$ and define
\[ \T =\bigcap_{p \in I} L^p(\mu) \]
with the initial topology, which makes each inclusion $\T\hookrightarrow
L^p(G) $ continuous, and
\[ \U = \operatorname{span}\bigcup_{q\in I} L^q(G)\]
with the final topology, which makes each inclusion
$L^q(G)\hookrightarrow\U$ continuous.

\begin{thm}[\cite{damuwe70}]\label{classic}
The space $\T$ is a reflexive Fr\'echet space and $\U$ is a complete
reflexive
locally convex topological vector space.  For each $g\in \U$, the
linear map  
\[ f\mapsto \int_G g(x){f(x)}\,dx =g(f)\]
is continuous and $g\mapsto g(\cdot)$ identifies, as
topological vector spaces,   the  dual of  $\T$  with $\U$.
For each $f\in \T$, the linear map 
\[ g\mapsto \int_G f(x){g(x)}\,dx=f(g) \]
is continuous and  $f\mapsto f(\cdot)$ identifies, as topological vector spaces,   the  dual of  $\U$  with~$\T$.
\end{thm}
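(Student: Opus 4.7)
The plan is to prove the three separate assertions about $\T$ (Fr\'echet and reflexive), about the duality $\T' = \U$ and $\U' = \T$ through the natural integral pairing, and about $\U$ being complete and reflexive, in that order, since reflexivity of each space is essentially the duality statement applied twice.

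First I would establish that $\T$ is a Fr\'echet space. For metrizability, I would select a countable subset $\{p_n\}_{n\in\N}\subset I$ with $p_n\searrow 1$ and $p_n\nearrow +\infty$, and use the log-convexity of $L^p$ norms (Littlewood's inequality): for $p_1<p<p_2$ and $1/p=\theta/p_1+(1-\theta)/p_2$ with $\theta\in(0,1)$, one has $\|f\|_p\le\|f\|_{p_1}^\theta\|f\|_{p_2}^{1-\theta}$. This shows that the countable family $\{\|\cdot\|_{p_n}\}_n$ is cofinal in $\{\|\cdot\|_p\}_{p\in I}$ and hence generates the initial topology. Completeness follows from a standard argument: a Cauchy sequence in $\T$ is Cauchy in each $L^p(G)$ and converges to some $f_p\in L^p(G)$; extracting a subsequence converging almost everywhere identifies all the $f_p$ with a single $f\in\bigcap_{p\in I}L^p(G)=\T$.

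Next comes the duality $\T'=\U$, which is the core of the theorem. The embedding $\U\hookrightarrow\T'$ is H\"older's inequality: if $g\in L^q(G)$ with $q\in I$, then $q'\in I$ too, so $|\int gf\,dx|\le\|g\|_q\|f\|_{q'}$ is a continuous seminorm on $\T$. For the reverse inclusion, given $\varphi\in\T'$, the continuity of $\varphi$ for the initial topology yields $p_1,\dots,p_n\in I$ and $C>0$ such that $|\varphi(f)|\le C\sum_{i=1}^n\|f\|_{p_i}$. Consider then the diagonal embedding $\Delta\colon\T\to\bigoplus_{i=1}^n L^{p_i}(G)$, where the direct sum is endowed with the $\ell^1$-sum of the norms; $\varphi$ descends to a continuous form on $\Delta(\T)$, which extends by Hahn--Banach to all of $\bigoplus_{i=1}^n L^{p_i}(G)$. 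The classical duality of a finite direct sum produces $g_i\in L^{p_i'}(G)$ (and each $p_i'\in I$) with $\varphi(f)=\sum_i\int g_i f\,dx$, so $\varphi$ is represented by $g=\sum g_i\in\U$. Injectivity of the pairing $\U\to\T'$ follows because $C_c(G)\subset\T$ separates points of $L^0(G)$. To upgrade this set-theoretic identification to the topological one (with $\T'$ carrying the strong topology and $\U$ its final locally convex topology), one checks that the polars of the sets $\{f\in\T:\|f\|_{p}\le 1\}$ give a fundamental system for the strong topology on $\T'$, which matches the final topology on $\U$ inherited from the inclusions $L^{p'}(G)\hookrightarrow\U$.

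The duality $\U'=\T$ is comparatively routine: by the universal property of the final topology, a linear form on $\U$ is continuous iff its restriction to each $L^q(G)$ is continuous, hence represented by some $f_q\in L^{q'}(G)$; compatibility on intersections forces $f_q=f$ for a single $f\in\bigcap_{q\in I}L^{q'}(G)=\T$. Reflexivity of $\T$ is then immediate: $\T''=\U'=\T$ with the canonical evaluation map as the identification. Reflexivity of $\U$ follows symmetrically. Finally, $\U$ is complete as the strong dual of a reflexive Fr\'echet space (Bourbaki \cite{BTVS}, Chapter IV.3.3). The main obstacle I expect is the surjectivity of $\U\to\T'$: the reduction to a finite direct sum is clean only because every continuous seminorm on $\T$ is dominated by a \emph{finite} maximum of $L^{p_i}$-norms, and one must then argue carefully that the Hahn--Banach extension lives in the span of the $L^{p_i'}(G)$ rather than in some larger space, which requires the classical identification $(L^{p_1}\oplus\cdots\oplus L^{p_n})'=L^{p_1'}\oplus\cdots\oplus L^{p_n'}$. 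Matching the topologies, as opposed to the underlying vector spaces, is the secondary technical point.
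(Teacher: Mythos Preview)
The paper's proof of this theorem is purely referential: it simply points to the relevant results in \cite{damuwe70} (Proposition~2.1, Theorem~2.1, Corollary~3.2, Theorem~3.1, Corollary~3.3) and notes that $I'=I$ and that $\beta$ is $\sigma$-finite. Your proposal, by contrast, is a genuine reconstruction of the argument, and its main lines are correct and presumably close to what \cite{damuwe70} actually does: the Fr\'echet property via log-convexity and a countable cofinal family of exponents, the embedding $\U\hookrightarrow\T'$ via H\"older, surjectivity via Hahn--Banach on a finite diagonal embedding, and $\U'=\T$ via the universal property of the final topology.

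The one place where your sketch is thin is exactly the point you flag as ``secondary'': the \emph{topological} identification $\T'_b=\U$. Your claim that the polars of the $L^p$-unit balls form a fundamental system for the strong topology on $\T'$ is not obvious as stated. The strong topology is uniform convergence on \emph{all} bounded subsets of $\T$, and a bounded set in $\T$ is one bounded in every $L^p$-norm simultaneously; it need not sit inside a single $L^p$-ball, so one must argue that its polar is still a neighbourhood for the inductive topology on $\U$. Conversely, showing that the inductive (final) topology on $\U$ is not strictly finer than the strong dual topology also requires work, since $\U$ is not a strict inductive limit. These are precisely the points handled in \cite{damuwe70}, and completing them is what turns your outline into a proof. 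Once the topological equality $\U=\T'_b$ is established, completeness of $\U$ follows (strong duals of Fr\'echet spaces are complete), and reflexivity of both spaces follows from the two duality statements together; your order of deductions is fine provided you do not invoke completeness of $\U$ before the topological identification is secured.
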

\begin{proof}
Here we refer to \cite{damuwe70}. 
Observe that the Haar measure $\beta$ is $\sigma$-finite since $G$ is
  locally compact and second countable and $\beta$ is  finite on
  compact subsets. Furthermore, denoted by $I'=\set{\frac{p}{p-1}\mid
  p\in I}$, clearly $I'=I$.  Proposition~2.1 and the following remark
show that the map $f\mapsto \scal{f}{\cdot}_\U$ is a topological 
isomorphism from $\T$  onto the strong dual of $\U$. 

Theorem~2.1 and Corollary 3.2  show that the map $g\mapsto \scal{g}{\cdot}_\T$ is a topological 
isomorphism from $\U$  onto the strong dual of $\T$. 

Hence  we can identify, as topological vector spaces, the dual of $\T$ with
$\U$ and the dual of $\U$ with $\T$. So that both $\T$ and $\U$ are
reflexive locally convex vector spaces. Theorem 3.1 proves that $\T$
is a Frech\'et space and Corollary 3.3 shows that $\U$ is complete.
\end{proof}
Note that, since $\T$ and $\U$ are reflexive spaces, they are  barrelled (Theorem
2 IV.2.3 of \cite{BTVS}).

\section*{Acknowledgements} 
\noindent 
S. Dahlke acknowledges support from Deutsche Forschungsgemeinschaft
(DFG), Grant DA 360/19--1. 
F. De Mari, E.~De Vito and S. Vigogna were partially supported by
Progetto PRIN 2010-2011 "Variet\`a reali e complesse: geometria,
topologia e analisi armonica ". 
D. Labate acknowledges support from NSF grants DMS 1008900 and
1005799. 
S. Dahlke, G. Steidl and G. Teschke  were partially supported by DAAD
Project 57056121,  ''Hochschuldialog mit Südeuropa 2013''.


\begin{thebibliography}{10}
\expandafter\ifx\csname url\endcsname\relax
  \def\url#1{\texttt{#1}}\fi
\expandafter\ifx\csname urlprefix\endcsname\relax\def\urlprefix{URL }\fi
\expandafter\ifx\csname href\endcsname\relax
  \def\href#1#2{#2} \def\path#1{#1}\fi

\bibitem{fegr88}
H.~G. Feichtinger, K.~H. Gr{\"o}chenig, A unified approach to atomic
  decompositions via integrable group representations, in: Function spaces and
  applications ({L}und, 1986), Vol. 1302 of Lecture Notes in Math., Springer,
  Berlin, 1988, pp. 52--73.

\bibitem{fegr89a}
H.~G. Feichtinger, K.~H. Gr{\"o}chenig, Banach spaces related to integrable
  group representations and their atomic decompositions. {I}, J. Funct. Anal.
  86~(2) (1989) 307--340.

\bibitem{fegr89b}
H.~G. Feichtinger, K.~H. Gr{\"o}chenig, Banach spaces related to integrable
  group representations and their atomic decompositions. {II}, Monatsh. Math.
  108~(2-3) (1989) 129--148.

\bibitem{gro91}
K.~H. Gr{\"o}chenig, Describing functions: atomic decompositions versus frames,
  Monatsh. Math. 112~(1) (1991) 1--42.

\bibitem{LSU12}
Y.~Liang, Y.~Sawano, T.~Ullrich, D.~Yang, W.~Yuan, New characterizations of
  besov-triebel-lizorkin-hausdorff spaces including coorbits and wavelets, J.
  Fourier Anal. Appl. 18~(5) (2012) 1067--1111.

\bibitem{UH11}
T.~Ullrich, H.~Rauhut, {G}eneralized coorbit space theory and inhomogeneous
  function spaces of {B}esov-{L}izorkin-{T}riebel type, {J}. {F}unct. {A}nal.
  11 (2011) 3299--3362.

\bibitem{feigro85}
H.~G. Feichtinger, P.~Gr{\"o}bner, Banach spaces of distributions defined by
  decomposition methods {I}, Math. Nachr. 123 (1985) 97--120.

\bibitem{DFH08}
S.~Dahlke, M.~Fornasier, H.~Rauhut, G.~Steidl, G.~Teschke, {G}eneralized
  coorbit theory, {B}anach frames, and the relation to $alpha$-modulation
  spaces, {P}roc. {L}ondon {M}ath. {S}oc. 96~(2) (2008) 464--506.

\bibitem{GL07}
K.~Guo, D.~Labate, Optimally sparse multidimensional representation using
  shearlets, SIAM J. Math. Anal. 39~(1) (2007) 298--318.

\bibitem{CD04}
E.~J. Cand{\`e}s, D.~L. Donoho, New tight frames of curvelets and optimal
  representations of objects with piecewise {$C^2$} singularities, Comm. Pure
  Appl. Math. 57~(2) (2004) 219--266.

\bibitem{DKMSST08}
S.~Dahlke, G.~Kutyniok, P.~Maass, C.~Sagiv, H.~G. Stark, G.~Teschke, The
  uncertainty principle associated with the continuous shearlet transform, Int.
  J. Wavelets Multiresolut. Inf. Process. 6 (2008) 157--181.

\bibitem{DKS09}
S.~Dahlke, G.~Kutyniok, G.~Steidl, G.~Teschke, Shearlet coorbit spaces and
  associated banach frames, Applied and Computational Harmonic Analysis 27~(2)
  (2009) 195--214.

\bibitem{KL09}
G.~Kutyniok, D.~Labate, Resolution of the wavefront set using continuous
  shearlets, Trans. Amer. Math. Soc. 361~(5) (2009) 2719--2754.

\bibitem{DST10}
S.~Dahlke, G.~Steidl, G.~Teschke, The continuous shearlet transform in
  arbitrary space dimensions, J. Fourier Anal. Appl. 16~(2) (2010) 340--354.

\bibitem{DST11}
S.~Dahlke, G.~Steidl, G.~Teschke, Shearlet coorbit spaces: compactly supported
  analyzing shearlets, traces and embeddings, J. Fourier Anal. Appl. 17~(6)
  (2011) 1232--1255.

\bibitem{DST12}
S.~Dahlke, G.~Steidl, G.~Teschke, Multivariate shearlet transform, shearlet
  coorbit spaces and their structural properties, in: Shearlets: Multiscale
  Analysis for Multivariate Data, Applied and Numerical Harmonic Analysis,
  Springer, 2012, pp. 105--144.

\bibitem{LMN13}
D.~Labate, L.~Mantovani, P.~Negi, Shearlet smoothness spaces, J. Fourier Anal.
  Appl. 19~(3) (2013) 577--611.

\bibitem{ADDM13}
G.~Alberti, F.~De~Mari, E.~De~Vito, L.~Mantovani, Reproducing subgroups of
  {$Sp(2,{R})$}. {P}art {II}: Admissible vectors., Monatsh. Math.

\bibitem{dede13}
F.~De~Mari, E.~De~Vito, Admissible vectors for mock metaplectic
  representations, Appl. Comput. Harmon. Anal. 34~(2) (2013) 163--200.

\bibitem{C2012}
J.~G. Christensen, Sampling in reproducing banach spaces on lie groups, J.
  Approx. Theory 164 (2012) 179--203.

\bibitem{chol11}
J.~G. Christensen, G.~{\'O}lafsson, Coorbit spaces for dual pairs, Appl.
  Comput. Harmon. Anal. 31~(2) (2011) 303--324.

\bibitem{F2013}
H.~F{\"u}hr, Coorbit spaces and wavelet coefficient decay over general dilation
  groups, preprint.

\bibitem{chol09}
J.~G. Christensen, G.~{\'O}lafsson, Examples of coorbit spaces for dual pairs,
  Acta Appl. Math. 107~(1-3) (2009) 25--48.

\bibitem{fol95}
G.~B. Folland, A course in abstract harmonic analysis, Studies in Advanced
  Mathematics, CRC Press, Boca Raton, FL, 1995.

\bibitem{BTVS}
N.~Bourbaki, Topological vector spaces. {C}hapters 1--5, Elements of
  Mathematics (Berlin), Springer-Verlag, Berlin, 1987, translated from the
  French by H. G. Eggleston and S. Madan.

\bibitem{hor66}
J.~Horv{\'a}th, Topological vector spaces and distributions. {V}ol. {I},
  Addison-Wesley Publishing Co., Reading, Mass.-London-Don Mills, Ont., 1966.

\bibitem{dumo76}
M.~Duflo, C.~Moore, On the regular representation of a nonunimodular locally
  compact group, J. Functional Analysis 21~(2) (1976) 209--243.

\bibitem{grmopa85}
A.~Grossmann, J.~Morlet, T.~Paul, Transforms associated to square integrable
  group representations. {I}. {G}eneral results, J. Math. Phys. 26~(10) (1985)
  2473--2479.

\bibitem{BINT2}
N.~Bourbaki, Integration. {II}. {C}hapters 7--9, Elements of Mathematics
  (Berlin), Springer-Verlag, Berlin, 2004, translated by Sterling K. Berberian.

\bibitem{BINT1}
N.~Bourbaki, Integration. {I}. {C}hapters 1--6, Elements of Mathematics
  (Berlin), Springer-Verlag, Berlin, 2004, translated by Sterling K. Berberian.

\bibitem{damuwe70}
H.~W. Davis, F.~J. Murray, J.~K.~J. Weber, Families of {$L_{p}$}-spaces with
  inductive and projective topologies, Pacific J. Math. 34 (1970) 619--638.

\bibitem{hor90}
L.~H{\"o}rmander, The analysis of linear partial differential operators. {I},
  2nd Edition, Springer Study Edition, Springer-Verlag, Berlin, 1990.

\bibitem{plpo37}
M.~Plancherel, G.~P{\'o}lya, Fonctions enti\`eres et int\'egrales de fourier
  multiples, Comment. Math. Helv. 10~(1).

\bibitem{la93}
S.~Lang, Real and functional analysis, 3rd Edition, Vol. 142 of Graduate Texts
  in Mathematics, Springer-Verlag, New York, 1993.

\bibitem{sch66}
L.~Schwartz, Th\'eorie des Distributions., Hermann \& Cie., Paris, 1966.

\bibitem{grmorpa85}
A.~Grossmann, J.~Morlet, T.~Paul, Transforms associated to square integrable
  group representations. {I}. {G}eneral results, J. Math. Phys. 26~(10) (1985)
  2473--2479.

\bibitem{ag00}
S.~T. Ali, J.-P. Antoine, J.-P. Gazeau, Coherent states, wavelets and their
  generalizations, Graduate Texts in Contemporary Physics, Springer-Verlag, New
  York, 2000.

\bibitem{ABDD13}
G.~Alberti, L.~Balletti, F.~De~Mari, E.~De~Vito, Reproducing subgroups of
  {$Sp(2,{R})$}. {P}art {I}: Algebraic classification., J. Fourier Anal. Appl.

\bibitem{zyg2002}
A.~Zygmund, Trigonometric series. {V}ol. {I}, {II}, 3rd Edition, Cambridge
  Mathematical Library, Cambridge University Press, Cambridge, 2002.

\bibitem{fuhr2005}
H.~F{\"u}hr, Abstract harmonic analysis of continuous wavelet transforms, Vol.
  1863 of Lecture Notes in Mathematics, Springer-Verlag, Berlin, 2005.

\bibitem{hero1}
E.~Hewitt, K.~A. Ross, Abstract harmonic analysis. {V}ol. {I}: {S}tructure of
  topological groups. {I}ntegration theory, group representations, Die
  Grundlehren der mathematischen Wissenschaften, Bd. 115, Academic Press Inc.,
  Publishers, New York, 1963.

\end{thebibliography}

\end{document}